\definecolor{added}{rgb}{0, 0, 1}
\definecolor{deleted}{rgb}{1, 0, 0}
\newtheorem{theorem}{Theorem}[section]
\newtheorem{lemma}[theorem]{Lemma}
\newtheorem{proposition}[theorem]{Proposition}
\newtheorem{remark}[theorem]{Remark}
\newtheorem{definition}[theorem]{Definition}
\newtheorem{corollary}[theorem]{Corollary}
\newcommand{\sect}[1]{\section{#1} \setcounter{equation}{0} }
\newcounter{ca}
\newcommand{\norm}[2]{\left\|#1\right\|_{#2}}
\newcommand{\Pn}{\mathbb P_n}
 \newcommand{\ec}{\end{comment}}
\newcommand{\bc}{ \begin{comment}
 }
\newcommand{\F}{{\mathcal F}}
\newcommand{\E}{{\mathcal E}}
\newcommand{\G}{{\mathcal G}}
\newcommand{\I}{{\mathcal I}}
\newcommand{\andd}{\quad\mbox{\rm and}\quad}
\newcommand\e{{\varepsilon}}
\newcommand\w{{\omega}}
\def\be  {\begin{equation}}
\def\ee  {\end{equation}}
\newenvironment{comment}[2]
{\bgroup\vspace{7pt}
\begin{tabular}{|p{5in}|}
\hline \qquad \bf \footnotesize Comment -- to be deleted in the final version \\
\hline
\quad\sl\footnotesize #1#2} {\\ \hline \end{tabular}
\vspace{7pt}\indent\egroup}
\def\updots{\mathinner{\mkern
1mu\raise 1pt \hbox{.}\mkern 2mu \mkern 2mu \raise
4pt\hbox{.}\mkern 1mu \raise 7pt\vbox {\kern 7 pt\hbox{.}}} }
\def \esssup{\mathop{\rm ess\: sup}\nolimits}
\newcommand{\B}{\mathbb B}
\newcommand{\C}{C}
\newcommand{\R}{\mathbb R}
\newcommand{\N}{\mathbb N}
\renewcommand{\a}{\alpha}
\renewcommand{\b}{\beta}
\newcommand{\ineq}[1]{{\rm(\ref{#1})}}
\newcommand{\ie}{{\em i.e., }}
\newcommand{\eg}{{\em e.g. }}
\newcommand{\bpic}{
\begin{center}
}
\newcommand{\epic}{
\endpspicture
\end{center}
}
\newcommand{\st}{\;\; \big| \;\;}
\renewcommand{\L}{L}
\newcommand{\Lp}{\L_p}
\newcommand{\Lq}{\L_q}
\newcommand{\Poly}{\mathbb P}
 \newcommand{\AC}{\mathrm{AC}}
  \newcommand{\loc}{\mathrm{loc}}
\newcommand{\wkr}{\w_{k,r}^\varphi}
\newcommand{\wkrav}{\w_{k,r}^{*\varphi}}
 \newcommand{\Dom}{{\mathfrak{D}}}
\newcommand{\ddelta}{\mu(\delta)}
 \newcommand{\wt}{{\mathcal{W}}}
\newcommand{\W}{{\mathcal{W}}}
\newcommand{\Z}{{\mathcal Z}}
\newcommand{\thm}[1]{Theorem~\ref{#1}}
\newcommand{\lem}[1]{Lemma~\ref{#1}}
\newcommand{\cor}[1]{Corollary~\ref{#1}}
 \newcommand{\Lpab}{\L_p^{\alpha,\beta}}
 \newcommand{\wab}{w_{\a,\b}}
 \newcommand{\weight}{\wt_{kh}^{r/2+\a,r/2+\b}}
\title{{\sc On moduli of smoothness with Jacobi weights}
\thanks{{\sc UDC:} 517.5 } }
\author{K. A.  Kopotun\thanks{Department of Mathematics, University of
Manitoba, Winnipeg, Manitoba, R3T 2N2, Canada ({\tt
Crimea\_is\_Ukraine@shaw.ca}). Supported by NSERC of Canada.} ,
D. Leviatan\thanks{Raymond and Beverly Sackler School of Mathematical
Sciences, Tel Aviv University, Tel Aviv 6139001, Israel ({\tt
leviatan@post.tau.ac.il}).}\ \ and I. A. Shevchuk\thanks
{Faculty of Mechanics and Mathematics, Taras Shevchenko
National University of Kyiv, 01033 Kyiv, Ukraine ({\tt
shevchuk@univ.kiev.ua}).}}
\begin{document}

\maketitle

\centerline{\sl\small Dedicated to Academician Anatoly Samoilenko  on the occasion of his 80th birthday}

\abstract{
The main purpose of this paper is to introduce moduli of smoothness with Jacobi weights $(1-x)^\a(1+x)^\b$ for functions in the Jacobi weighted $\Lp[-1,1]$, $0<p\leq \infty$, spaces.
These moduli are used to characterize the smoothness of (the derivatives of)  functions in the weighted $\Lp$ spaces.
If $1\le p\le\infty$, then these moduli are equivalent to certain weighted $K$-functionals  (and so they are equivalent
to certain weighted  Ditzian-Totik  moduli of smoothness for these $p$), while for $0<p<1$ they are equivalent to certain  ``Realization functionals''.
 }

\sect{Introduction and main results}

The main purpose of this paper is to introduce moduli of smoothness with Jacobi weights $(1-x)^\a(1+x)^\b$ for functions in the Jacobi weighted $\Lp[-1,1]$, $0<p\leq \infty$, spaces.
These moduli generalize the moduli that were recently introduced by the authors in \cites{kls, kls1}  in order to characterize the smoothness of (the derivatives of)  functions in the ordinary (unweighted) $\Lp$ spaces.

For a measurable function $f:[-1,1]\mapsto\R$ and an interval $I\subseteq [-1,1]$, we use the usual notation
$\norm{f}{\Lp(I)}:= (\int_I |f(x)|^p\,dx )^{1/p}$, $0<p<\infty$, and  $\|f\|_{L_\infty(I)}:=\esssup_{x\in I}|f(x)|$.
For a weight function $w$, we let
$L_{w,p}(I):=\{f\mid \norm{wf}{\Lp(I)}<\infty \}$,
and, for $f\in L_{w,p}(I)$,
we denote by $E_n(f,I)_{w,p}:=\inf_{p_n\in\Pn}\|w(f-p_n)\|_{\Lp(I)}$,
the error of best weighted approximation of $f$ by polynomials in $\Pn$, the set of algebraic polynomials of degree strictly less than $n$.
For $I=[-1,1]$, we denote $\norm{\cdot}{p}:= \norm{\cdot}{\Lp[-1,1]}$, $L_{w,p} := L_{w,p}[-1,1]$, $E_n(f)_{w,p} := E_n(f,[-1,1])_{w,p}$, etc. Finally, denote
\[
\varphi(x):=\sqrt{1-x^2}.
\]
\begin{definition} For $r\in\N_0$ and  $0<p \le \infty$, denote $\B^0_p(w) :=L_{w,p}$ and
\[
\B_p^r(w):=\left\{\,f\,|\,f^{(r-1)}\in AC_{loc}(-1,1)\quad\text{and}\quad\varphi^rf^{(r)}\in L_{w,p} \right\}, \quad r\ge 1,
\]
where $AC_{loc}(-1,1)$ denotes the set of  functions which are locally absolutely continuous in $(-1,1)$.
\end{definition}

Now, define
\[
J_p := \begin{cases}
(-1/p, \infty), & \mbox{\rm if } p<\infty , \\
[0,\infty), & \mbox{\rm if } p=\infty ,
\end{cases}
\]
let
\[
w_{\a,\b}(x):=(1-x)^\a(1+x)^\b  , \quad \a,\b \in J_p ,
\]
be the Jacobi weights, and denote $\Lpab := \L_{\wab,p}$.

Also denote
\[
\wt_\delta^{\xi,\zeta} (x):=  (1-x-\delta\varphi(x)/2)^\xi
(1+x-\delta\varphi(x)/2)^\zeta.
\]
Note that $\wt_0^{\a,\b} (x) = \wab(x)$, $\wt_0^{1/2,1/2} (x) = \varphi(x)$ and, if $\xi,\zeta \geq 0$, $\wt_\delta^{\xi,\zeta} (x) \leq w_{\xi,\zeta}(x)$.

For $k\in\N$ and $h\geq 0$, let
\[
\Delta_h^k(f,x; J):=\left\{
\begin{array}{ll} 
\sum_{i=0}^k  \binom{k}{i}
(-1)^{k-i} f(x-\frac{kh}2+ih),&\mbox{\rm if }\, [x-\frac{kh}2, x+\frac{kh}2]  \subseteq J \,,\\
0,&\mbox{\rm otherwise},
\end{array}\right.
\]
be the $k$th symmetric difference, and $\Delta_h^k(f,x) := \Delta_h^k(f,x; [-1,1])$.

We introduce the following definition, which for $\a,\b=0$, was given in \cite{kls}*{Definition 2.2} (for $\a,\b=0$ and $p=\infty$ see the earlier \cite{sh}*{Chapter 3.10}).

\begin{definition} \label{maindefinition}
For $k,r\in\N$ and
$f\in \B^r_p(w_{\a,\b})$, $0< p\le\infty$,  define
\be \label{wkrdefinition}
\wkr(f^{(r)},t)_{\a,\b,p}:=\sup_{0\leq h\leq t}
\norm{ \weight(\cdot) \Delta_{h\varphi(\cdot)}^k (f^{(r)},\cdot)}p.
\ee
\end{definition}

For $\delta>0$, denote (see \cite{kls})
\begin{align*}
\Dom_\delta:=&\left\{x\st1-\delta\varphi(x)/2\geq|x|
\right\}\setminus\{\pm1\}\\=&\left\{x\st|x|\leq
\frac{4-\delta^2}{4+\delta^2}\right\}=[-1+\ddelta,1-\ddelta],
\end{align*}
where
\[
\ddelta:=2\delta^2/(4+\delta^2).
\]
Observe that $\Dom_{\delta_1}\subset\Dom_{\delta_2}$ if $\delta_2<\delta_1\le2$, and that $\Dom_\delta=\emptyset$ if $\delta>2$. Also note that
${\Delta}_{h\varphi(x)}^k(f,x)$ is defined to be identically 0 if
$x\not\in\Dom_{kh}$ and that  $\wt_\delta^{r/2+\a,r/2+\b}$ is well defined on
$\Dom_\delta$ (except perhaps at the endpoints where it may be infinite).

Hence,
\be\label{dom}
\wkr(f^{(r)},t)_{\a,\b,p}=\sup_{0<h\leq t}\norm{\weight(\cdot)
\Delta_{h\varphi(\cdot)}^k(f^{(r)},\cdot)}{L_p(\Dom_{kh})}
\ee
and
\be\label{larget}
\wkr(f^{(r)},t)_{\a,\b,p}=\wkr(f^{(r)},2/k)_{\a,\b,p},\quad\mbox{\rm for }t\geq 2/k .
\ee

In a forthcoming paper \cite{stok}, we will prove   Whitney-,  Jackson- and Bernstein-type  theorems for the Jacobi weighted approximation of functions in the above spaces by algebraic polynomials. Thus, we get a constructive  characterization of the smoothness classes with respect to these moduli by means of the degrees of approximation. This implies, in particular, that these moduli are the right measure of smoothness to be used while investigating constrained weighted approximation (see \eg  \cites{kls-umzh, glsw, klps}).

We will show that, for 
$r/2+\a, r/2+\b\geq 0$, our moduli are equivalent to the following weighted averaged moduli.

\begin{definition}
For $k\in\N$, $r\in\N_0$ and  $f\in \B^r_p(w_{\a,\b})$, $0< p<\infty$,    the $k$th weighted averaged modulus of smoothness of $f$ is defined as
\[
\wkrav(f^{(r)},t)_{\a,\b,p}
:=\left(\frac1t\int_0^t\int_{\Dom_{k\tau}}
| \wt^{r/2+\a,r/2+\b}_{k\tau}(x)\Delta^k_{\tau\varphi(x)}(f^{(r)},x)|^p\,dx\,d\tau
\right)^{1/p} .
\]
If $p=\infty$ and   $f\in\B^r_\infty(w_{\a,\b})$, we write
\[
\wkrav(f^{(r)},t)_{\a,\b,\infty}:=\wkr(f^{(r)},t)_{\a,\b,\infty}\,.
\]
\end{definition}

Clearly, 
\be\label{ineq}
\omega_{k,r}^{*\varphi}(f^{(r)},t)_{\a,\b,p}\le\wkr(f^{(r)},t)_{\a,\b,p} ,\quad t>0,\quad 0<p\le\infty.
\ee

We now define the weighted $K$-functional as well as the ``Realization functional'' as follows.

\begin{definition} \label{kfunctionals}
For $k\in\N$, $r\in\N_0$ and $f\in\B^r_p(w_{\a,\b})$, $0< p\le\infty$,  define
\begin{align*}
\lefteqn{ K^\varphi_{k,r}(f^{(r)},t^k)_{\a,\b,p} } \\
&\quad :=\inf_{g\in\B^{k+r}_p(\wab)}   \left\{ \norm{\wab \varphi^r (f^{(r)}-g^{(r)})}{p}
+t^k \norm{\wab\varphi^{k+r}g^{(k+r)}}{p} \right\}
\end{align*}
and
\begin{align*}
\lefteqn{  R^\varphi_{k,r}(f^{(r)},n^{-k})_{\a,\b,p} }\\
&\quad := \inf_{P_n\in\Pn} \left\{  \norm{\wab \varphi^r (f^{(r)}-P_n^{(r)})}{p} + n^{-k} \norm{\wab \varphi^{k+r} P_n^{(k+r)}}{p} \right\} .
\end{align*}
\end{definition}
 Clearly, $K^\varphi_{k,r}(f^{(r)},n^{-k})_{\a,\b,p} \leq R^\varphi_{k,r}(f^{(r)},n^{-k})_{\a,\b,p}$, $n\in\N$.
Note that, as is rather well known,   $K$-functionals are not the right measure of smoothness if $0<p<1$, since they may become identically zero.

Throughout this paper, all constants $c$   may depend only on  $k$, $r$, $p$, $\a$ and $\b$, but are independent of the function as well as the important parameters $t$ and $n$. The constants $c$ may be different even if they appear in the same line.


Our first main result in this paper is the following theorem.
It is a corollary of Lemma~\ref{upper} and the sequence of estimates \ineq{mainlower}.

\begin{theorem}\label{thm1.4}
If $k\in\N$, $r\in\N_0$, $r/2+\a\geq 0$, $r/2+\b\geq 0$, $1\leq p\leq \infty$ and $f\in\B_p^r(\wab)$,  then
 there exists $N\in\N$ depending on $k$, $r$, $p$, $\a$ and $\b$, such that
 for all $0<t\leq 2/k$ and $n\in\N$ satisfying
 $\max\{N, c_1/t\}\le n\le c_2/t$,
\begin{align}   \label{maintheorem}
  K^\varphi_{k,r}(f^{(r)},t^k)_{\a,\b,p} &\leq c R^\varphi_{k,r}(f^{(r)},n^{-k})_{\a,\b,p} \leq  c \wkrav(f^{(r)},t)_{\a,\b,p}\\
 & \leq c \wkr(f^{(r)},t)_{\a,\b,p} \leq c  K^\varphi_{k,r}(f^{(r)},t^k)_{\a,\b,p} \nonumber  ,
\end{align}
 where constants $c$ may depend only on $k$, $r$, $p$, $\a$, $\b$ as well as $c_1$ and $c_2$.
\end{theorem}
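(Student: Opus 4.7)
The plan is to prove the theorem by assembling the four inequalities displayed in \eqref{maintheorem}, two of which are essentially immediate and two of which constitute the content. The first inequality, $K^\varphi_{k,r}(f^{(r)},t^k)_{\a,\b,p} \le c R^\varphi_{k,r}(f^{(r)},n^{-k})_{\a,\b,p}$, follows by using the near-minimizer $P_n\in\Pn \subset \B^{k+r}_p(w_{\a,\b})$ of the realization functional as the competitor $g$ in the infimum defining $K^\varphi_{k,r}$; since $n^{-k}$ and $t^k$ are comparable by the hypothesis $c_1/t \le n \le c_2/t$, the two terms match up to a constant. The third inequality, $\wkrav(f^{(r)},t)_{\a,\b,p} \le \wkr(f^{(r)},t)_{\a,\b,p}$, is already recorded as \eqref{ineq} and is just the observation that the average of the $L_p$ integrand in $\tau\in(0,t)$ is dominated by its supremum.

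For the fourth inequality $\wkr(f^{(r)},t)_{\a,\b,p} \le c K^\varphi_{k,r}(f^{(r)},t^k)_{\a,\b,p}$, I would invoke the forthcoming Lemma \ref{upper}; the expected argument splits, for an arbitrary $g\in\B^{k+r}_p(w_{\a,\b})$,
\[
\Delta^k_{h\varphi(x)}(f^{(r)},x) = \Delta^k_{h\varphi(x)}(f^{(r)}-g^{(r)},x) + \Delta^k_{h\varphi(x)}(g^{(r)},x).
\]
The first piece is handled by expanding the symmetric difference, using the pointwise domination of $\wt^{r/2+\a,r/2+\b}_{kh}(x)$ by $w_{\a,\b}(y)\varphi^{r}(y)/\varphi^{r}(x)$ uniformly in $y\in[x-kh/2,x+kh/2]\cap\Dom_{kh}$, and absorbing the resulting change of variable to bound the $L_p$-norm by $c\,\|w_{\a,\b}\varphi^r(f^{(r)}-g^{(r)})\|_p$. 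The second piece uses the iterated integral representation $\Delta^k_{h\varphi(x)}(g^{(r)},x) = \int\!\cdots\!\int g^{(k+r)}(x+u_1+\cdots+u_k)\,du_1\cdots du_k$, producing the factor $(h\varphi(x))^k$; combined with the analogous domination $\wt^{r/2+\a,r/2+\b}_{kh}(x)\varphi^k(x) \le c\,w_{\a,\b}(y)\varphi^{k+r}(y)$, this yields $c\,t^k\|w_{\a,\b}\varphi^{k+r}g^{(k+r)}\|_p$. Taking the infimum over $g$ gives the bound.

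The second inequality $R^\varphi_{k,r}(f^{(r)},n^{-k})_{\a,\b,p} \le c\wkrav(f^{(r)},t)_{\a,\b,p}$ is the most substantive one and is the sequence of estimates labelled \eqref{mainlower}. The natural approach is to construct an explicit $P_n\in\Pn$ from local polynomial pieces glued together by a smooth partition of unity subordinate to the Chebyshev-type partition $I_j=[\cos((j+1)\pi/n),\cos(j\pi/n)]$, each local piece being a near-best $L_p$-approximation of $f$ on a slightly enlarged $I_j$. The weighted error $\|w_{\a,\b}\varphi^r(f^{(r)}-P_n^{(r)})\|_p$ is then controlled locally by a Whitney-type estimate, which converts the local error into a local $L_p$-norm of $k$-th differences of $f^{(r)}$; summing in $j$ and comparing with the averaging integral in $\tau$ over $(0,t)$ recovers $\wkrav$. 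The second term $n^{-k}\|w_{\a,\b}\varphi^{k+r}P_n^{(k+r)}\|_p$ is handled on each $I_j$ by a weighted Markov/Bernstein inequality (the factor $\varphi^{k+r}$ absorbs precisely the $n^k$ loss on $I_j$) and then reduced to the same $k$-th difference norms.

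The main obstacle is the second inequality, specifically verifying that the weight $\wt^{r/2+\a,r/2+\b}_{k\tau}$, with its $\tau$-dependent exponent profile, genuinely matches the weight $w_{\a,\b}\varphi^{k+r}$ used in the realization functional, in the sense required for the local-to-global passage near the endpoints $\pm1$ where $\varphi$ vanishes. The hypotheses $r/2+\a\ge 0$ and $r/2+\b\ge 0$ are there exactly so that $\wt^{r/2+\a,r/2+\b}_{k\tau}(x) \le w_{r/2+\a,r/2+\b}(x)$, which is the pointwise bridge between the two weights; making this pointwise comparison compatible with the local polynomial construction on the $O(1/n^2)$-wide boundary intervals $I_0$ and $I_{n-1}$ is where the delicate Jacobi-weight bookkeeping lives.
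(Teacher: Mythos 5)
Your skeleton --- which four inequalities to prove and how to dispose of the first and third --- matches the paper, but both substantive steps have gaps as sketched. For the fourth inequality, the pointwise domination you invoke for the smooth piece, $\wt^{r/2+\a,r/2+\b}_{kh}(x)\,\varphi^k(x)\le c\,\wab(y)\varphi^{k+r}(y)$ uniformly for $y$ in the support of the difference, is false: at the endpoints of $\Dom_{kh}$ the point $y=x\pm kh\varphi(x)/2$ reaches $\pm1$, where $\varphi(y)=0$ while $\varphi(x)>0$. Only the weaker bound $\weight(x)\le\wab(y)\varphi^r(y)$ (with no extra power of $\varphi$) holds, and that suffices for the $f-g$ piece but not for the $g$ piece. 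The paper's \lem{lem:estimate} therefore does not argue pointwise: it applies H\"older's inequality in the innermost integration variable, producing the factor $\norm{\varphi^{-k}}{L_q(\mathcal{A}(x,u))}$, and then splits $\Dom_{kh}$ into $\Dom_{2kh}$ (where $\varphi(y)\sim\varphi(x)$ and your heuristic is valid) and two boundary pieces handled by the technical estimates behind \ineq{kg} (inequalities (4.19) and (4.10) of \cite{kls}). As written, your argument does not close near $\pm1$.

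For the second inequality, your glue-local-polynomials construction is the standard route, but the step ``summing in $j$ and comparing with the averaging integral recovers $\wkrav$'' conceals the real difficulty: the differences $\Delta^k_{\tau\varphi(x)}$ in $\wkrav$ are supported on $\Dom_{k\tau}$ and never reach into the boundary intervals of width $O(n^{-2})$, so the local errors on $I_0$ and $I_{n-1}$ are not controlled by $\wkrav$ through any direct comparison of differences. This is precisely why the paper works with the complete moduli $\w^k_\varphi(f,A,t)_{p,\wab}$, which carry explicit endpoint terms $E_k(f,\Z^j_{2A,t})_{\wab,p}$, imports the Jackson/realization estimate for those moduli from \cite{k-singular} and \cite{k-infty} (\thm{jacksonthm}), and then needs the separate local Whitney-type estimate $E_k(f,[1-At^2,1])_{\wab,p}\le c\,\w^{*\varphi}_{k,0}(f,t)_{\a,\b,p}$ (\thm{thm:localwh}, proved in \cite{stok}) to convert those endpoint terms back into the new modulus (\lem{lemma43}). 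Some ingredient playing the role of \thm{thm:localwh} is indispensable; your proposal identifies where the Jacobi-weight bookkeeping is delicate but supplies no mechanism for the endpoint intervals, so the local-to-global passage does not go through.
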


\begin{remark} Clearly, $K^\varphi_{k,r}(f^{(r)},t^k)_{\a,\b,p} \leq \norm{\wab \varphi^r f^{(r)}}{p}<\infty$, for all $f\in\B_p^r(\wab)$, and
it follows from {\rm \thm{smallab}} that, if  $r/2+\a <0$ or/and $r/2+\b<0$, then there exists a function $f\in\B_p^r(\wab)$ such that
$\wkr(f^{(r)},t)_{\a,\b,p} = \infty$, for all $t>0$. Hence, {\rm \thm{thm1.4}} is not valid if  $r/2+\a <0$ or/and $r/2+\b<0$.
\end{remark}

We can somewhat simplify the statement of \thm{thm1.4} if we remove the realization functional $R^\varphi_{k,r}$ from \ineq{maintheorem}.

\begin{corollary}\label{maincorollary}
If $k\in\N$, $r\in\N_0$, $r/2+\a\geq 0$, $r/2+\b\geq 0$, $1\leq p\leq \infty$ and $f\in\B_p^r(\wab)$,  then,
 for all $0<t\leq 2/k$,
\begin{align*}  
  K^\varphi_{k,r}(f^{(r)},t^k)_{\a,\b,p} &\leq     c \wkrav(f^{(r)},t)_{\a,\b,p}
  \leq c \wkr(f^{(r)},t)_{\a,\b,p}
    \leq c  K^\varphi_{k,r}(f^{(r)},t^k)_{\a,\b,p}   .
\end{align*}
\end{corollary}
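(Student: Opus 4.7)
The plan is to reduce \cor{maincorollary} directly to \thm{thm1.4} by observing that the realization functional $R^\varphi_{k,r}$ can be eliminated from the chain \ineq{maintheorem} provided that, for each admissible $t$, an integer $n$ is chosen in the appropriate range depending on $t$. The middle and rightmost inequalities of \cor{maincorollary} appear verbatim in \thm{thm1.4} (and the step $\wkrav \le \wkr$ is precisely \ineq{ineq}), so the only content to produce is the leftmost inequality $K^\varphi_{k,r}(f^{(r)},t^k)_{\a,\b,p} \le c\,\wkrav(f^{(r)},t)_{\a,\b,p}$.

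First I would fix once and for all the constants $c_1, c_2$ to be used when invoking \thm{thm1.4}. These are at our disposal subject only to $c_1 \le c_2$, so I would choose $c_1 := 1$ and $c_2$ sufficiently large (depending on $k$ and $N$) to guarantee that the interval $[\max\{N, c_1/t\}, c_2/t]$ has length at least $1$ for every $t \in (0, 2/k]$, and hence contains an integer. For any such $t$, select any $n = n(t) \in \N$ lying in this interval.

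Applying \thm{thm1.4} with this $n$ and composing the first two inequalities in \ineq{maintheorem} yields
\[
K^\varphi_{k,r}(f^{(r)},t^k)_{\a,\b,p} \le c\,R^\varphi_{k,r}(f^{(r)},n^{-k})_{\a,\b,p} \le c\,\wkrav(f^{(r)},t)_{\a,\b,p},
\]
which is the missing inequality. The constants a priori depend on $c_1, c_2$, but since those are fixed in terms of $k, r, p, \a, \b$ (through $N$), the overall constant depends only on the parameters listed in the corollary. The remaining two inequalities are taken verbatim from \thm{thm1.4}.

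The substantive work is entirely within \thm{thm1.4}, which in turn rests on \lem{upper} and the estimates \ineq{mainlower}. Consequently there is no real obstacle in the proof of the corollary: the main point is bookkeeping, namely verifying that the admissible range for $n$ is nonempty for all $t \in (0,2/k]$, which is handled by taking $c_2$ large enough. The dependence of the chosen $n$ on $t$ is harmless once $R^\varphi_{k,r}$ is collapsed away, since the surviving quantities depend only on $t$, not on $n$.
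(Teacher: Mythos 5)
Your proposal is correct and is essentially the paper's own (implicit) derivation: the corollary is obtained from \thm{thm1.4} by fixing $c_1$ and taking $c_2$ large enough (depending on $k$ and $N$) that the admissible range $[\max\{N,c_1/t\},\,c_2/t]$ contains an integer for every $t\in(0,2/k]$, after which $R^\varphi_{k,r}$ drops out of the chain \ineq{maintheorem} and the resulting constants depend only on $k,r,p,\a,\b$. Nothing further is needed.
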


In the case $0<p<1$, we have the following result on the equivalence of the moduli and Realization functionals. It is a corollary of \thm{thm46} that will be proved in Section~\ref{seclower}.

\begin{theorem} \label{pless1}
Let  $k\in\N$, $r\in\N_0$,  $0<p<1$, $r/2+\a\geq 0$, $r/2+\b\geq 0$ and $f\in\B_p^r(\wab)$. Then there exist $N\in\N$ and $\vartheta>0$ depending on $k$, $p$, $\a$ and $\b$, such that, for any $\vartheta_1\in (0,\vartheta]$, $n\geq N$, $\vartheta_1/n\leq t \leq \vartheta/n$, we have
\[
R^\varphi_{k,r}(f,n^{-k})_{\a,\b,p} \sim \w_{k,r}^{*\varphi}(f^{(r)},  t)_{\a,\b,p}\sim \w_{k,r}^{\varphi}(f^{(r)},  t)_{\a,\b,p}.
\]
\end{theorem}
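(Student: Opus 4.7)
The plan is to establish the chain of inequalities
\begin{equation*}
R^\varphi_{k,r}(f^{(r)},n^{-k})_{\a,\b,p} \le c\,\wkrav(f^{(r)},t)_{\a,\b,p} \le \wkr(f^{(r)},t)_{\a,\b,p} \le c\,R^\varphi_{k,r}(f^{(r)},n^{-k})_{\a,\b,p},
\end{equation*}
from which each of the claimed equivalences follows from two consecutive links. The middle inequality is exactly \ineq{ineq}, so only the two outer estimates need to be derived. The two-sided hypothesis $\vartheta_1/n\le t\le\vartheta/n$ lets us freely interchange $t^k$ and $n^{-k}$ up to multiplicative constants throughout the argument, and the outer estimates will both be short consequences of the upcoming \thm{thm46}.

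For the rightmost link $\wkr(f^{(r)},t)_{\a,\b,p}\le c\,R^\varphi_{k,r}(f^{(r)},n^{-k})_{\a,\b,p}$, I would fix any $P_n\in\Pn$ and split $f^{(r)}=(f^{(r)}-P_n^{(r)})+P_n^{(r)}$. Since $0<p<1$, the $p$-subadditivity of $\|\cdot\|_p^p$ yields
\begin{equation*}
\wkr(f^{(r)},t)_{\a,\b,p}^p \le \wkr(f^{(r)}-P_n^{(r)},t)_{\a,\b,p}^p + \wkr(P_n^{(r)},t)_{\a,\b,p}^p.
\end{equation*}
The first summand is controlled by expanding $\Delta^k_{h\varphi(x)}$ into at most $k+1$ point values and using the bound $\weight(x)\le c\,\wab(x)\varphi^r(x)$ on $\Dom_{kh}$ together with the doubling character of $\wab$ at the scale $\varphi(x)$, which gives $\wkr(f^{(r)}-P_n^{(r)},t)_{\a,\b,p}\le c\|\wab\varphi^r(f^{(r)}-P_n^{(r)})\|_p$. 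The second summand is handled by the weighted Bernstein-type inequality $\wkr(P_n^{(r)},t)_{\a,\b,p}\le c\,t^k\|\wab\varphi^{k+r}P_n^{(k+r)}\|_p$ for polynomials, which is exactly what \thm{thm46} delivers in the regime $0<p<1$. Replacing $t^k$ by $cn^{-k}$ using $t\le\vartheta/n$ and taking the infimum over $P_n\in\Pn$ produces the desired bound.

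For the leftmost link $R^\varphi_{k,r}(f^{(r)},n^{-k})_{\a,\b,p}\le c\,\wkrav(f^{(r)},t)_{\a,\b,p}$, I would invoke \thm{thm46} in its direct (Jackson-type) form: for all sufficiently large $n$ it produces a polynomial $P_n\in\Pn$ with
\begin{equation*}
\|\wab\varphi^r(f^{(r)}-P_n^{(r)})\|_p + n^{-k}\|\wab\varphi^{k+r}P_n^{(k+r)}\|_p \le c\,\wkrav(f^{(r)},t)_{\a,\b,p}
\end{equation*}
for $t\sim 1/n$. Inserting this $P_n$ into the infimum defining $R^\varphi_{k,r}(f^{(r)},n^{-k})_{\a,\b,p}$ yields the stated inequality.

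The main obstacle is concentrated entirely inside \thm{thm46}, namely the weighted Bernstein-type inequality for polynomials in the quasi-Banach range $0<p<1$, where the usual Hardy--Littlewood maximal function arguments are unavailable. One instead partitions $[-1,1]$ into intervals on which $\varphi$ is essentially constant, exploits the doubling (and $A_\infty$-like) properties of $\wab$ on such intervals, and uses the $p$-subadditivity of $\|\cdot\|_p^p$ to glue local estimates into a global one; this handles both the Bernstein bound and the Jackson construction required above. Once that theorem is available, the present proof reduces to the short bookkeeping sketched in the preceding two paragraphs.
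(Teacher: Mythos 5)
Your overall architecture is the right one, and it matches the paper's: both outer links of your chain ultimately rest on \thm{thm46} (the realization--modulus equivalence imported from \cite{k-singular}), and the reduction to $r=0$ by renaming the weight exponents is exactly how the paper proceeds. However, there is a genuine gap in how you invoke \thm{thm46}. That theorem is stated for the \emph{complete} weighted moduli ${\w}_\varphi^k(f,A,t)_{p,\wab}$ and $\widetilde{\w}_\varphi^k(f,A,t)_{p,\wab}$, which consist of a main-part modulus taken only over $\I_{A,h}=[-1+Ah^2,1-Ah^2]$ with the \emph{fixed} weight $\wab$, plus local best-approximation errors $E_k(f,\Z_{2A,t}^j)_{\wab,p}$ on the endpoint zones. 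It does not directly deliver either the Bernstein-type bound $\wkr(P_n^{(r)},t)_{\a,\b,p}\le ct^k\|\wab\varphi^{k+r}P_n^{(k+r)}\|_p$ or the Jackson-type bound with $\wkrav$ on the right, because $\wkr$ and $\wkrav$ are built from the shifted weight $\weight$ over all of $\Dom_{kh}$, which reaches much closer to $\pm1$ than $\I_{A,h}$, and there $\weight$ and $\wab\varphi^r$ are not comparable. The two-sided comparison between the two families of moduli is precisely the content of \lem{lemma43} and \lem{ab}: one direction needs the local Whitney inequality $E_k(f,\Z_{A,t})_{\wab,p}\le c\,\w_{k,0}^{*\varphi}(f,t)_{\a,\b,p}$ of \thm{thm:localwh} (itself nontrivial and proved in \cite{stok}), and the other needs subtracting a local near-best polynomial on the endpoint zones and applying the local boundedness estimate of Remark~\ref{remlocal}. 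Your sketch treats this bridging as if it were automatic, but it is where the actual work of the proof lies.

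Two smaller points. First, your bound on the term $\wkr(f^{(r)}-P_n^{(r)},t)_{\a,\b,p}\le c\|\wab\varphi^r(f^{(r)}-P_n^{(r)})\|_p$ is fine: that is \lem{uppernorm}, valid for all $0<p\le\infty$ when $r/2+\a,r/2+\b\ge0$. Second, the leftmost link $R^\varphi_{k,r}\le c\,\wkrav$ is in the paper exactly the chain \ineq{mainlower} via \cor{jacksoncor}, which again is obtained from the Jackson theorem for $\widetilde{\w}_\varphi^k$ (\thm{jacksonthm}) only \emph{after} passing through \lem{lemma43}; so the same bridging step is needed there too. If you add the statement and proof of the two comparison lemmas (or at least cite them explicitly), your argument closes.
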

Here, as usual, by $a(t)\sim b(t)$, $t\in T$,  we mean  that there exists a positive constant $c_0$  such that $c_0^{-1}a(t)\le b(t)\le c_0 a(t)$, for all $t \in T$.

Note that it follows from \thm{pless1} that, for sufficiently small $t_1, t_2>0$ such that $t_1\sim t_2$,
\[
\w_{k,r}^{*\varphi}(f^{(r)},  t_1)_{\a,\b,p}\sim \w_{k,r}^{\varphi}(f^{(r)},  t_1)_{\a,\b,p}
\sim \w_{k,r}^{*\varphi}(f^{(r)},  t_2)_{\a,\b,p}\sim \w_{k,r}^{\varphi}(f^{(r)},  t_2)_{\a,\b,p} .
\]
If $1\leq p \leq \infty$, we can say a bit more.
\thm{thm1.4} and the (obvious) monotonicity of $\wkr(f^{(r)},t)_{\a,\b,p}$, with respect to $t$,
immediately yield the following quite useful property which is not easily seen from Definition~\ref{maindefinition}.

\begin{corollary} \label{corhmod}
Let $k\in\N$, $r\in\N_0$, $r/2+\a\ge0$, $r/2+\b\ge0$,  $1\le p\le\infty$,
  $f\in\B^r_p(w_{\a,\b})$ and $\lambda \ge 1$.  Then, for all $t>0$,
 \be \label{hmod}
 \wkr(f^{(r)},\lambda t)_{\a,\b,p} \leq c \lambda^k \wkr(f^{(r)},t)_{\a,\b,p}  .
 \ee
\end{corollary}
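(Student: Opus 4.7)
The plan is to pull the homogeneity property out of the equivalence of $\wkr$ with the $K$-functional $K^\varphi_{k,r}$ supplied by Theorem \ref{thm1.4}, because $K$-functionals are trivially sub-additive and near-homogeneous in their second argument. Specifically, for any admissible $g$ and any $\lambda \geq 1$, one has
\[
\norm{\wab \varphi^r (f^{(r)}-g^{(r)})}{p}+(\lambda t)^k \norm{\wab \varphi^{k+r} g^{(k+r)}}{p}
\le \lambda^k\Bigl(\norm{\wab \varphi^r (f^{(r)}-g^{(r)})}{p}+ t^k \norm{\wab \varphi^{k+r} g^{(k+r)}}{p}\Bigr),
\]
since $\lambda^k\ge 1$ multiplies both terms. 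Taking the infimum over $g\in\B^{k+r}_p(\wab)$ yields
\[
K^\varphi_{k,r}(f^{(r)},(\lambda t)^k)_{\a,\b,p}\le \lambda^k K^\varphi_{k,r}(f^{(r)}, t^k)_{\a,\b,p}.
\]

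With this in hand, the first step is the principal case $0<\lambda t\le 2/k$, where both $t$ and $\lambda t$ are in the range where Theorem~\ref{thm1.4} applies. Chaining the equivalences gives
\[
\wkr(f^{(r)},\lambda t)_{\a,\b,p}\le c\,K^\varphi_{k,r}(f^{(r)},(\lambda t)^k)_{\a,\b,p}
\le c\lambda^k K^\varphi_{k,r}(f^{(r)},t^k)_{\a,\b,p}\le c\lambda^k \wkr(f^{(r)},t)_{\a,\b,p},
\]
which is \eqref{hmod} in this case.

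The remaining step is to handle $\lambda t>2/k$ using the stabilisation identity \eqref{larget}. If $t\le 2/k<\lambda t$, set $\lambda':=(2/k)/t$, so $1\le \lambda'\le \lambda$ and $\lambda' t=2/k$; by \eqref{larget} and the first step applied with $\lambda'$ in place of $\lambda$,
\[
\wkr(f^{(r)},\lambda t)_{\a,\b,p}=\wkr(f^{(r)},\lambda' t)_{\a,\b,p}\le c(\lambda')^k \wkr(f^{(r)},t)_{\a,\b,p}\le c\lambda^k \wkr(f^{(r)},t)_{\a,\b,p}.
\]
If $t>2/k$, both $\wkr(f^{(r)},\lambda t)_{\a,\b,p}$ and $\wkr(f^{(r)},t)_{\a,\b,p}$ equal $\wkr(f^{(r)},2/k)_{\a,\b,p}$ by \eqref{larget}, and the inequality is immediate because $\lambda^k\ge 1$.

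There is no real obstacle here; the only subtlety is keeping track of the constraint $0<t\le 2/k$ under which Theorem~\ref{thm1.4} was established, which forces the case split above. The monotonicity of $\wkr(f^{(r)},\cdot)_{\a,\b,p}$ in $t$ (apparent from Definition~\ref{maindefinition}) together with \eqref{larget} takes care of every $t$ outside that range, so the three cases combine to give \eqref{hmod} for all $t>0$.
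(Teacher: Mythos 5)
Your proof is correct and follows essentially the same route as the paper: the corollary is extracted from the equivalence $\wkr(f^{(r)},t)_{\a,\b,p}\sim K^\varphi_{k,r}(f^{(r)},t^k)_{\a,\b,p}$ of Theorem~\ref{thm1.4} (Corollary~\ref{maincorollary}) together with the trivial homogeneity of the $K$-functional, with monotonicity and the stabilisation identity \eqref{larget} disposing of the range $\lambda t>2/k$. The paper states this as an immediate consequence without writing out the case split; your write-up just makes that explicit.
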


By virtue of \ineq{ko} the following result is an immediate consequence of \cor{maincorollary}.


\begin{theorem} \label{dtequiv}
Let $k\in\N$, $r\in\N_0$, $r/2+\a\ge0$, $r/2+\b\ge0$, and $1\le p\le\infty$.
 If $f\in\B^r_p(w_{\a,\b})$, then, for some $t_0>0$ independent of $f$ and $t$,
\be\label{claim}
\wkr(f^{(r)},t)_{\a,\b,p}\sim\omega^k_\varphi( f^{(r)},t)_{w_{\a,\b}\varphi^r,p}, \quad 0<t\leq t_0,
\ee
where the weighted DT moduli $\omega^k_\varphi( g,\cdot)_{w,p}$ are defined in \ineq{dtmod}.
\end{theorem}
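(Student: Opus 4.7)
The plan is to chain two equivalences through the weighted $K$-functional $K^\varphi_{k,r}(f^{(r)},t^k)_{\a,\b,p}$, which sits in the middle between our new moduli $\wkr(f^{(r)},t)_{\a,\b,p}$ on one side and the weighted Ditzian--Totik moduli $\omega^k_\varphi(f^{(r)},t)_{w_{\a,\b}\varphi^r,p}$ on the other. Under the hypotheses $r/2+\a\ge0$, $r/2+\b\ge0$ and $1\le p\le\infty$, Corollary~\ref{maincorollary} is directly applicable and already supplies the estimate
\[
K^\varphi_{k,r}(f^{(r)},t^k)_{\a,\b,p}\;\sim\;\wkr(f^{(r)},t)_{\a,\b,p}, \qquad 0<t\le 2/k,
\]
so no additional work is needed on that side.

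Next I would invoke the cited relation \ineq{ko}. Inspecting Definition~\ref{kfunctionals}, one sees that $K^\varphi_{k,r}(f^{(r)},t^k)_{\a,\b,p}$ is precisely the standard Ditzian--Totik $K$-functional applied to the function $f^{(r)}$ with weight $w_{\a,\b}\varphi^r$ and step weight $\varphi$, namely the infimum of $\|w_{\a,\b}\varphi^r(f^{(r)}-g)\|_p+t^k\|w_{\a,\b}\varphi^r\cdot\varphi^k g^{(k)}\|_p$ taken over appropriate $g$ (with $g=g^{(r)}$ in the notation of Definition~\ref{kfunctionals}, so the $r$-th derivative plays the role of the underlying function). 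The relation \ineq{ko} provides the equivalence
\[
K^\varphi_{k,r}(f^{(r)},t^k)_{\a,\b,p}\;\sim\;\omega^k_\varphi(f^{(r)},t)_{w_{\a,\b}\varphi^r,p}
\]
for sufficiently small $t$, say $0<t\le t_1$ for some $t_1$ that depends only on $k$, $r$, $p$, $\a$ and $\b$ (the usual restriction in the Ditzian--Totik theory, since the DT modulus with a doubling Jacobi weight is well defined only when the weight admits the required doubling behaviour on the Ditzian--Totik intervals).

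Combining the two equivalences with the choice $t_0:=\min\{2/k,t_1\}$ yields
\[
\wkr(f^{(r)},t)_{\a,\b,p}\;\sim\;K^\varphi_{k,r}(f^{(r)},t^k)_{\a,\b,p}\;\sim\;\omega^k_\varphi(f^{(r)},t)_{w_{\a,\b}\varphi^r,p}, \quad 0<t\le t_0,
\]
which is precisely \ineq{claim}. The constants throughout depend only on the admissible parameters $k,r,p,\a,\b$, as required. The only non-routine point is the verification that the weight $w_{\a,\b}\varphi^r$ (with exponents $r/2+\a\ge0$, $r/2+\b\ge 0$) falls within the class for which \ineq{ko} is available; but this is built into the hypotheses of the theorem, so no obstacle remains and nothing more needs to be done.
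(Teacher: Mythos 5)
Your proposal is correct and follows exactly the paper's route: the paper derives Theorem~\ref{dtequiv} as an immediate consequence of Corollary~\ref{maincorollary} combined with \ineq{ko}, using the same identification $K^\varphi_{k,r}(f^{(r)},t^k)_{\a,\b,p}=K_{k,\varphi}(f^{(r)},t^k)_{\wab\varphi^r,p}$ and the same observation that the hypotheses $r/2+\a\ge0$, $r/2+\b\ge0$ place the weight $\wab\varphi^r=w_{r/2+\a,\,r/2+\b}$ in the class covered by the Ditzian--Totik equivalence. Nothing is missing.
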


It was shown in \cite{kls1}*{Theorem 5.1} that, for $\xi,\zeta\ge 0$ and $g\in B_p^1(w_{\xi,\zeta})$,
\[
\omega^{k+1}_\varphi( g,t)_{w_{\xi,\zeta},p} \le ct \omega^k_\varphi( g',t)_{w_{\xi,\zeta}\varphi,p} , \quad t>0.
\]
Letting  $\xi := r/2+\a$, $\zeta := r/2+\b$, $g:= f^{(r)}$, using  the fact that
$f^{(r)}\in B_p^1(w_{r/2+\a,r/2+\b})$ if and only if $f \in B_p^{r+1}(\wab)$, by virtue of \ineq{claim},   as well as \ineq{hmod} if  $t$ is ``large''(\ie if $t > t_0$), we immediately get the following result.

\begin{lemma} \label{wkrplus1}
Let $k\in\N$, $r\in\N_0$,   $r/2+\a\geq 0$, $r/2+\b\ge 0$, and $1\le p\le\infty$.  If $f\in\B^{r+1}_p(w_{\a,\b})$, then
\be \label{rplusineq}
\w_{k+1,r}^\varphi (f^{(r)},t)_{\a,\b,p}\le c t \w_{k,r+1}^\varphi(f^{(r+1)},t)_{\a,\b,p}, \quad t>0.
\ee
\end{lemma}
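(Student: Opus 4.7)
The plan is to follow exactly the roadmap sketched in the paragraph preceding the lemma: reduce everything to the already-known \cite{kls1}*{Theorem 5.1} via the Ditzian--Totik equivalence from \thm{dtequiv}, and then extend from the ``small $t$'' range to all $t>0$ using \cor{corhmod} together with the saturation property \ineq{larget}.

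First, I set $\xi := r/2+\a\ge 0$ and $\zeta := r/2+\b\ge 0$ and put $g:=f^{(r)}$. The assumption $f\in\B^{r+1}_p(w_{\a,\b})$ means $f^{(r)}\in\AC_{\loc}(-1,1)$ and $\varphi^{r+1}f^{(r+1)}\in L_{w_{\a,\b},p}$; equivalently, $g'\in L_{w_{\a,\b}\varphi^{r+1},p}=L_{w_{\xi,\zeta}\varphi,p}$, so $g\in\B^1_p(w_{\xi,\zeta})$. Hence \cite{kls1}*{Theorem 5.1} applies and yields
\[
\omega^{k+1}_\varphi(g,t)_{w_{\xi,\zeta},p}\le c\,t\,\omega^k_\varphi(g',t)_{w_{\xi,\zeta}\varphi,p},\qquad t>0.
\]
Since $w_{\xi,\zeta}=w_{\a,\b}\varphi^r$ and $w_{\xi,\zeta}\varphi=w_{\a,\b}\varphi^{r+1}$, the right-hand weights are precisely those appearing in \ineq{claim} for $\w_{k+1,r}^\varphi(f^{(r)},\cdot)_{\a,\b,p}$ and $\w_{k,r+1}^\varphi(f^{(r+1)},\cdot)_{\a,\b,p}$, respectively.

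Next, for $0<t\le t_0$ (with $t_0$ the threshold supplied by \thm{dtequiv}), two applications of \ineq{claim} give
\[
\w_{k+1,r}^\varphi(f^{(r)},t)_{\a,\b,p}\sim\omega^{k+1}_\varphi(f^{(r)},t)_{w_{\a,\b}\varphi^r,p},\qquad
\w_{k,r+1}^\varphi(f^{(r+1)},t)_{\a,\b,p}\sim\omega^k_\varphi(f^{(r+1)},t)_{w_{\a,\b}\varphi^{r+1},p},
\]
and combining these with the inequality from \cite{kls1}*{Theorem 5.1} immediately produces \ineq{rplusineq} for all $0<t\le t_0$.

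Finally, for the ``large'' range $t>t_0$, I use that by \ineq{larget} both moduli are constant for $t\ge 2/(k+1)$ and $t\ge 2/k$ respectively, so I only need to control $t\in(t_0, 2/k]$, a bounded interval. On this interval \cor{corhmod} gives
\[
\w_{k+1,r}^\varphi(f^{(r)},t)_{\a,\b,p}\le c(t/t_0)^{k+1}\,\w_{k+1,r}^\varphi(f^{(r)},t_0)_{\a,\b,p},
\]
the small-$t$ bound already proved applies at $t_0$, and monotonicity of $\w_{k,r+1}^\varphi(f^{(r+1)},\cdot)_{\a,\b,p}$ lets me replace its value at $t_0$ by its value at $t$; since $t$ lies in a bounded range, the factor $t^{k+1}/t_0^k$ is absorbed into $c\,t$, yielding \ineq{rplusineq} for $t>t_0$ as well. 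The only potential obstacle is keeping the linear-in-$t$ factor on the right-hand side for large $t$ rather than $t^{k+1}$, but as just noted this is resolved for free by the saturation of the moduli beyond $t=2/(k+1)$.
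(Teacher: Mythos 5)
Your proposal is correct and follows exactly the route the paper itself indicates for \lem{wkrplus1}: substitute $\xi=r/2+\a$, $\zeta=r/2+\b$, $g=f^{(r)}$ into \cite{kls1}*{Theorem 5.1}, transfer back and forth via the equivalence \ineq{claim}, and use \ineq{hmod} (together with saturation) to cover $t>t_0$. Your handling of the large-$t$ range, absorbing the factor $t^{k+1}/t_0^{k}$ into $ct$ on the bounded interval $(t_0,2/k]$, is a correct filling-in of the detail the paper leaves implicit.
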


Finally, the following lemma follows from \cite{dt}*{Theorem 6.1.4} using \ineq{claim}.

\begin{lemma}\label{hierarchy2}
Let $k\in\N$, $r\in\N_0$,   $r/2+\a\geq 0$, $r/2+\b\ge 0$, and $1\le p\le\infty$.  If $f\in\B^{r}_p(\wab)$, then
\[
\w_{k+1,r}^\varphi(f^{(r)},t)_{\a,\b,p} \le c \wkr(f^{(r)},t)_{\a,\b,p}  , \quad t>0.
\]
\end{lemma}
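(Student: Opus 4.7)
The plan is to reduce the inequality to the classical weighted Ditzian--Totik modulus via the equivalence \ineq{claim} from \thm{dtequiv} and then to quote the standard hierarchy of DT moduli in the order, namely the estimate $\omega^{k+1}_\varphi(g,t)_{w,p}\le c\,\omega^{k}_\varphi(g,t)_{w,p}$, which is \cite{dt}*{Theorem 6.1.4}. First I fix $t_0>0$ small enough that \ineq{claim} holds simultaneously at both $(k,r)$ and $(k+1,r)$; this is possible by taking the minimum of the two cut-offs, each of which depends only on $k,r,p,\a,\b$.

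For $0<t\le t_0$, with $g:=f^{(r)}$ and weight $w:=w_{\a,\b}\varphi^r$, I chain
\begin{align*}
\w_{k+1,r}^\varphi(f^{(r)},t)_{\a,\b,p}
&\le c\,\omega^{k+1}_\varphi(f^{(r)},t)_{w_{\a,\b}\varphi^r,p} \\
&\le c\,\omega^{k}_\varphi(f^{(r)},t)_{w_{\a,\b}\varphi^r,p} \\
&\le c\,\wkr(f^{(r)},t)_{\a,\b,p},
\end{align*}
where the first and third inequalities are \ineq{claim} (applied at levels $k+1$ and $k$ respectively) and the middle inequality is the DT hierarchy cited above, applied to $g$ with weight $w$.

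For $t>t_0$, by \ineq{larget} I may assume that $t\le 2/(k+1)$, since both sides stabilize beyond that value. In this compact range I apply \cor{corhmod} to $\w_{k+1,r}^\varphi$ with $\lambda=t/t_0\in[1,2/((k+1)t_0)]$ to obtain
\[
\w_{k+1,r}^\varphi(f^{(r)},t)_{\a,\b,p}\le c\,\w_{k+1,r}^\varphi(f^{(r)},t_0)_{\a,\b,p},
\]
with a constant depending only on the allowed parameters. Combining with the already established small-$t$ case at $t_0$ and with the obvious monotonicity of the supremum-defined $\wkr(f^{(r)},\cdot)_{\a,\b,p}$ in $t$, I conclude
\[
\w_{k+1,r}^\varphi(f^{(r)},t_0)_{\a,\b,p}\le c\,\wkr(f^{(r)},t_0)_{\a,\b,p}\le c\,\wkr(f^{(r)},t)_{\a,\b,p}.
\]

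No real obstacle appears: the argument is essentially an assembly of facts that are either already in the paper (\thm{dtequiv}, \cor{corhmod}, \ineq{larget}) or are classical from \cite{dt}. The only points requiring mild care are the selection of a single cut-off $t_0$ valid for both $\w_{k,r}^\varphi$ and $\w_{k+1,r}^\varphi$, and the handling of the plateau region $t>t_0$, both of which are routine.
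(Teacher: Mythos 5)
Your proof is correct and follows exactly the route the paper indicates: the paper states that Lemma~\ref{hierarchy2} ``follows from \cite{dt}*{Theorem 6.1.4} using \ineq{claim}'', i.e., the equivalence with the weighted DT modulus at levels $k$ and $k+1$ sandwiching the DT hierarchy inequality, which is precisely your chain. Your additional care in choosing a common cut-off $t_0$ and handling $t>t_0$ via \ineq{larget}, \cor{corhmod} and monotonicity fills in details the paper leaves implicit, and is sound.
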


\sect{Hierarchy of $B_p^r(\wab)$, (un)boundedness of the moduli and their convergence to $0$}\label{sec2}

Without special references we  use the following evident inequalities:
\[
(1-x)\le2(1-u) \andd (1+x)\le2(1+u) , \quad \text{if} \quad u\in \left[\min\{0,x\}, \max\{0,x\} \right] ,
\]
and
\[
\varphi(x)\le\varphi(u),\quad\text{if}\quad|u|\le|x|\le1 .
\]

Also (see \cite{kls}*{Proposition 3.1(iv)}),
\be \label{derphi}
|\varphi'(x)| \le 1/\delta, \quad \text{for}\quad   x\in\Dom_\delta.
\ee

First we show the hierarchy between the $\B_p^r(w_{\a,\b})$, $r\ge0$, spaces. Namely,

\begin{lemma}\label{hierarchy}
Let $r\in\N_0$, $1\leq p\leq \infty$ and $r/2+\a, r/2+\b \in J_p$. Then,
\be \label{incl}
\B_p^{r+1}(\wab)\subseteq\B_p^r(\wab).
\ee
Moreover, in the case $p=\infty$, if $r/2+\a>0$ and $r/2+\b >0$, then, additionally,
\be \label{inclC}
f\in \B_\infty^{r+1}(\wab) \quad \Longrightarrow \quad \lim_{x\to\pm1} \wab(x)\varphi^r(x)f^{(r)}(x)=0.
\ee
\end{lemma}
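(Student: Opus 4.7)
The plan is to represent $f^{(r)}$ as an antiderivative of $f^{(r+1)}$ and transfer the integrability from $\wab\varphi^{r+1}f^{(r+1)}$ to $\wab\varphi^r f^{(r)}$ by a H\"older/Fubini argument. The regularity half of \ineq{incl}, namely $f^{(r-1)}\in AC_{loc}(-1,1)$ for $r\ge 1$, is automatic: $f^{(r)}\in AC_{loc}(-1,1)$ implies $f^{(r)}$ is continuous on $(-1,1)$ (so in particular $f^{(r)}(0)$ is a well-defined finite number) and $f^{(r+1)}$ is locally integrable, hence $f^{(r-1)}$ is absolutely continuous on every compact subinterval. The real content is therefore to show $\wab\varphi^r f^{(r)}\in\Lp$.

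By the symmetry $x\mapsto-x$ it suffices to estimate the weighted $L_p$ norm on $[0,1)$. Starting from
\[
f^{(r)}(x)=f^{(r)}(0)+\int_0^x f^{(r+1)}(t)\,dt,\quad x\in(-1,1),
\]
the constant piece contributes $|f^{(r)}(0)|\,\wab(x)\varphi^r(x)$, whose $L_p([0,1))$ norm is finite because $\wab(x)\varphi^r(x)\sim(1-x)^{\a+r/2}(1+x)^{\b+r/2}$ and $\a+r/2,\b+r/2\in J_p$. For the integral piece I would factor $f^{(r+1)}(t)=[\wab(t)\varphi^{r+1}(t)]^{-1}\cdot[\wab(t)\varphi^{r+1}(t)f^{(r+1)}(t)]$. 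For $1<p<\infty$ H\"older yields
\[
\biggl|\int_0^x f^{(r+1)}(t)\,dt\biggr|^p\le\biggl(\int_0^x[\wab(t)\varphi^{r+1}(t)]^{-p'}\,dt\biggr)^{p/p'}\|\wab\varphi^{r+1}f^{(r+1)}\|_p^p;
\]
for $p=1$ a Fubini swap gives the upper bound $(\a+r/2+1)^{-1}\int_0^1(1-t)^{\a+r/2+1}|f^{(r+1)}(t)|\,dt\le C\|\wab\varphi^{r+1}f^{(r+1)}\|_1$ using $(1-t)^{1/2}\le 1$, and for $p=\infty$ one simply pulls out the $L_\infty$ norm.

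For $1<p<\infty$ the estimate is finished by splitting on whether the auxiliary integral $\int_0^x(1-t)^{-p'(\a+(r+1)/2)}\,dt$ stays bounded as $x\to 1^-$ (when $\a+(r+1)/2<1/p'$) or blows up like $(1-x)^{1-p'(\a+(r+1)/2)}$ (with a logarithmic correction at equality). In the bounded case $(\wab\varphi^r)^p\sim(1-x)^{p(\a+r/2)}$ is by itself integrable on $[0,1)$ since $\a+r/2>-1/p$. In the blow-up case the exponent cancellation
\[
p(\a+r/2)+\tfrac{p}{p'}\bigl(1-p'(\a+(r+1)/2)\bigr)=\tfrac{p}{2}-1
\]
reduces the integrand to (a constant multiple of) $(1-x)^{p/2-1}$, which is integrable on $[0,1)$ for every $p\ge 1$. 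Combined with the symmetric estimate on $(-1,0]$ this yields $\|\wab\varphi^r f^{(r)}\|_p\le C(|f^{(r)}(0)|+\|\wab\varphi^{r+1}f^{(r+1)}\|_p)$, proving \ineq{incl}.

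For the addendum \ineq{inclC} in the case $p=\infty$ with $\a+r/2,\b+r/2>0$, the same pointwise estimate on $f^{(r)}(x)-f^{(r)}(0)$ gives $O(1)$, $O(\log(1/(1-x)))$, or $O((1-x)^{1-\a-(r+1)/2})$ according as $\a+(r+1)/2<1$, $=1$, or $>1$. Multiplying by $\wab(x)\varphi^r(x)\sim(1-x)^{\a+r/2}$ produces respectively $O((1-x)^{\a+r/2})$, $O((1-x)^{\a+r/2}\log(1/(1-x)))$, and $O((1-x)^{1/2})$, each of which tends to $0$ as $x\to 1^-$ thanks to the strict inequality $\a+r/2>0$. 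The endpoint $x\to-1^+$ is handled symmetrically. The main technical obstacle is the case split and the bookkeeping of weight exponents, especially verifying the cancellation in the blow-up case; everything else reduces to routine estimates for beta-type integrals.
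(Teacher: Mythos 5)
Your proposal is correct and follows essentially the same route as the paper: represent $f^{(r)}$ as an antiderivative of $f^{(r+1)}$, apply H\"older with the weight split as $[\wab\varphi^{r+1}]^{-1}\cdot[\wab\varphi^{r+1}f^{(r+1)}]$, and check integrability of the resulting beta-type integrals by a case analysis on the exponents, with the strict positivity of $r/2+\a$ and $r/2+\b$ supplying the vanishing factor for \ineq{inclC}. The only differences are organizational (symmetry reduction to $[0,1)$ instead of assuming $\b\ge\a$, a Fubini swap for $p=1$ instead of the $q=\infty$ endpoint of H\"older, and an explicit exponent cancellation where the paper uses a monotonicity bound), none of which changes the substance.
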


\begin{remark}
Note that we may not relax the condition  $r/2+\a, r/2+\b >0$ in order to guarantee \ineq{inclC}. Indeed, if $\a=-r/2$, for example, then
the function $g(x):=x^r$ is certainly in $\B_\infty^{r+1}(\wab)$ 
but $\lim_{x\to 1} \wab(x) \varphi^r(x) g^{(r)}(x) \neq 0$.

The same example shows that we may not relax the condition $r/2+\a, r/2+\b \in J_p$ in order to guarantee \ineq{incl}, since $\norm{\wab \varphi^r g^{(r)}}{p} = \infty$ if this condition is not satisfied, so that $g\not\in\B_p^r(\wab)$.
\end{remark}

\begin{remark}
For any $r\in\N_0$ and $\a,\b\in\R$, \ineq{incl} is not valid if $0<p<1$. For example, suppose that $f$ is such that
\[
f^{(r)}(x) = \sum_{n=1}^\infty g_n(x) ,
\]
where, for each $n\in\N$,
\[
g_n (x) :=
\begin{cases}
 \frac{H_n}{\e_n}\left( x +1- \frac{1}{n+1} \right), & \text{if}\;  \frac{1}{n+1} <  x+1 \leq  \frac{1}{n+1} +\e_n, \\
H_n , & \text{if}\;  \frac{1}{n+1} +\e_n < x+1 \leq  \frac 1n -\e_n, \\
\frac{H_n}{\e_n}\left( \frac 1n  - x -1 \right), & \text{if}\;  \frac 1n -\e_n <  x+1 \le  \frac 1n ,\\
0, & \text{otherwise,}
\end{cases}
\]
$H_n := n^{r/2+\b+1/p}$,  $\e_n := c_0 n^{-2/(1-p)}$, and  $c_0>0$ is a constant depending only on $p$ that guarantees that $4\e_n n(n+1) <1$, for all $n\in\N$.  Then $f^{(r)}\in \AC_\loc(-1,1)$ and
\[
\norm{\wab \varphi^r f^{(r)}}{p}^p   =   \sum_{n=1}^\infty  \norm{\wab \varphi^r g_n}{p}^p
\geq c \sum_{n=1}^\infty \frac{1}{n^{(r/2+\b)p}} H_n^p n^{-2} = c \sum_{n=1}^\infty n^{-1} = \infty .
\]
Hence, $f\not\in B_p^r(\wab)$. At the same time,
\begin{align*}
\norm{\wab \varphi^{r+1} f^{(r+1)}}{p}^p &=  \sum_{n=1}^\infty  \norm{\wab \varphi^{r+1} g_n'}{p}^p
\leq c \sum_{n=1}^\infty \frac{1}{n^{((r+1)/2+\b)p}} \left( H_n \e_n^{-1} \right)^p \e_n \\
& =   c \sum_{n=1}^\infty n^{1-p/2} \e_n^{1-p} =c \sum_{n=1}^\infty n^{-1-p/2} < \infty ,
\end{align*}
so that $f\in B_p^{r+1}(\wab)$.
\end{remark}

\begin{proof}[Proof of $\lem{hierarchy}$] The proof follows along the lines of \cite{kls}*{Lemma 3.4} with some modifications, we bring it here for the sake of completeness.
Let $g\in\B_p^{r+1}(w_{\a,\b})$, and assume, without loss of generality, that $g^{(r)}(0)=0$
and that $\b\ge\a$. For convenience, denote $A_p:=\norm{\wab \varphi^{r+1}g^{(r+1)}}{p}$.

First, if $p=\infty$, then $A_\infty < \infty$ and
\begin{align}
w_{\a,\b}(x)\varphi^r(x)\bigl|g^{(r)}(x)\bigr|&=w_{\a,\b}(x)\varphi^r(x)\left|\int_0^xg^{(r+1)}(u)\,du\right|\\ \nonumber
&\le A_\infty
w_{\a,\b}(x)\varphi^r(x)\left|\int_0^xw^{-1}_{\a,\b}(u)\varphi^{-r-1}(u)\,du\right|\\ \nonumber
& \le 2^{\b-\a}A_\infty
\varphi^{r+2\a}(x)\left|\int_0^x\varphi^{-r-1-2\a}(u)\,du\right|\\ \nonumber
&= 2^{\b-\a}A_\infty
\varphi^{r+2\a}(x)\int_0^{|x|}\varphi^{-r-1-2\a}(u)\,du\\ \nonumber
&\le 2^{\b-\a}A_\infty
\int_0^{|x|}\varphi^{-1}(u)\,du\\ \nonumber
&\le2^{\b-\a}A_\infty
\int_0^1\varphi^{-1}(u)\,du\\ \nonumber
& =\pi2^{\b-\a-1}A_\infty.
\end{align}
Hence, $g\in \B_\infty^r(w_{\a,\b})$, and \ineq{incl} is proved if $p=\infty$.

In order to prove \ineq{inclC} we need to show that, if $r/2+\a, r/2+\b >0$, then
\be\label{limit}
\lim_{x\to\pm1}w_{\a,\b}(x)\varphi^r(x)g^{(r)}(x)=0.
\ee
(Note that we are still not losing generality by assuming that $g^{(r)}(0)=0$.)
We put $\varepsilon:=\min\{r+2\alpha,1\}>0$ and note that
$$
\int_0^x\frac1{\varphi^{2}(u)}\,du=\frac12\ln\frac{1+x}{1-x}.
$$
Therefore,
\begin{align*}
w_{\a,\b}(x)\varphi^r(x)|g^{(r)}(x)|&\le 2^{\b-\a}A_\infty \varphi^\varepsilon (x)\int_0^{|x|}\frac1{\varphi^{1+\varepsilon}(u)}\,du \\
& \le 2^{\b-\a}A_\infty \varphi^\varepsilon (x)\int_0^{|x|}\frac1{\varphi^{2}(u)}\,du\\
&=2^{\b-\a}A_\infty \varphi^\varepsilon(|x|)\ln\frac{1+|x|}{1-|x|}\to 0,\quad |x|\to 1,
\end{align*}
 and \ineq{limit} is proved.

Now let $1\le p<\infty$ and $q:=p/(p-1)$. Then, denoting
\[
\left| \int_0^x |G(u)|^q du \right|^{1/q}:= \sup_{u\in \left[ \min\{0,x\}, \max\{0,x\} \right]  } |G(u)|
\]
if   $q=\infty$, we have by H\"older's inequality
\begin{align*}
  \norm{\wab \varphi^{r}g^{(r)}}{p}^p
&=   \int_{-1}^1w^p_{\a,\b}(x)\varphi^{rp}(x)\left|\int_0^xg^{(r+1)}(u)\,du\right|^p\,dx \\
&\le   \int_{-1}^1w^p_{\a,\b}(x)\varphi^{rp}(x)
\left|\int_{0}^{x}w^{-q}_{\a,\b}(u)\varphi^{-(r+1)q}(u)\,du\right|^{p/q}\\
&   \quad\times\left|\int_{0}^{x}|w_{\a,\b}(u)\varphi^{r+1}(u)g^{(r+1)}(u)|^p\,du\right|\,dx\nonumber\\
&\le    A_p^p
\int_{-1}^1w^p_{\a,\b}(x)\varphi^{rp}(x)\left|\int_{0}^{x}w^{-q}_{\a,\b}(u)\varphi^{-(r+1)q}(u)du\right|^{p/q}dx\\
&\le    2^{(\b-\a)p}A_p^p
\int_{-1}^1\varphi^{rp+2\a p}(x)\left|\int_{0}^{x}\varphi^{-(r+1)q-2\a q}(u)du\right|^{p/q}dx\\
&=:   2^{(\b-\a)p} A_p^p \Theta(\a,p).
\end{align*}
Note that
\[
\Theta(\a,1)   =\int_{-1}^1\varphi^{r+2\a }(x) \left( \sup_{u\in \left[ \min\{0,x\}, \max\{0,x\} \right]  } \varphi^{-r-1-2\a }(u)\right) dx.
\]
Recall that $r/2+\a\in J_p$ so that $rp+2\a p>-2$. We consider two cases.

{\sl Case} 1. Suppose that $rp+2\a p \ge -1$.
If $p=1$, then $r+2\a+1 \ge 0$ implies that
$\Theta(\a,1)   =  \int_{-1}^1 \varphi^{-1 }(x)    dx = \pi$, and
if $1<p<\infty$, then   $((r+1)q-1+2\alpha q)p/q=rp+2\a p+1\ge 0$,  and hence
\begin{align*}
\Theta(\a,p)&=
2\int_{0}^1\frac1{\varphi(x)}\left(\int_{0}^{x}\frac{\varphi^{(r+1)q-1+2\alpha q}(x)}{\varphi^{(r+1)q+2\a q}(u)}\,du\right)^{p/q}\,dx\\
&\le
2\int_{0}^1\frac1{\varphi(x)}\left(\int_{0}^{x}\frac{1}{\varphi(u)}\,du\right)^{p/q}\,dx
\le
2\int_{0}^1\frac{dx}{\varphi(x)}\left(\int_{0}^{1}\frac{du}{\varphi(u)}\right)^{p/q}\\
&=2(\pi/2)^{p}.
\end{align*}

{\sl Case} 2. Suppose now that $-2< rp+2\a p < -1$. If $p=1$, then
\[
\Theta(\a,1)   =\int_{-1}^1\varphi^{r+2\a }(x)   dx <\infty .
\]

If $1<p<\infty$, then  $(r+1)q+2\a q<1$. Hence
\[
\int_{0}^{1}\varphi^{-(r+1)q-2\a q}(u)du <  \int_{0}^{1}\varphi^{-1}(u)du  = \pi/2,
\]
and so
$$
\Theta(\a,p) \le 2 (\pi/2)^{p/q} \int_{0}^1\varphi^{rp+2\a p}(x)dx  < \infty.
$$
This completes the proof.
\end{proof}

We now show that, for a function $f\in\B^r_p(\wab)$, if  $r/2+\a\ge0$ and $r/2+\b\ge0$, then the modulus $\wkr(f^{(r)},t)_{\a,\b,\,p}$ is bounded.

\begin{lemma}\label{uppernorm} Let $k\in\N$, $r\in\N_0$, $r/2+\a\ge0$, $r/2+\b\ge0$, and $0< p\le\infty$. If $f\in\B^r_p(\wab)$, then
\be \label{modbounded}
\wkr(f^{(r)},t)_{\a,\b,\,p}\le c  \norm{ \wab \varphi^r f^{(r)}}{p}, \quad t>0,
\ee
where  $c$ depends only on $k$ and $p$.
\end{lemma}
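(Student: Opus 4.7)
The plan is to bound the symmetric difference pointwise by a sum of $k+1$ translates of $f^{(r)}$, apply the (quasi-)triangle inequality in $L_p$, and then reduce each resulting term to $\|\wab\varphi^r f^{(r)}\|_p$ via a change of variables. Writing $s := i - k/2$ for $i = 0,\dots,k$, so that $|s|\le k/2$, the key identity is
\[
\Delta^k_{h\varphi(x)}(f^{(r)},x) = \sum_{i=0}^k \binom{k}{i}(-1)^{k-i} f^{(r)}\bigl(x + sh\varphi(x)\bigr).
\]
For $1\le p\le\infty$ apply the ordinary triangle inequality; for $0<p<1$ apply the $p$-triangle inequality $\|\sum g_i\|_p^p \le \sum\|g_i\|_p^p$. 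Either way, it suffices to prove, for every fixed $s\in\{-k/2,\dots,k/2\}$,
\[
\Bigl\|\weight(\cdot)\, f^{(r)}\bigl(\cdot + sh\varphi(\cdot)\bigr)\Bigr\|_{L_p(\Dom_{kh})} \le c\,\|\wab\varphi^r f^{(r)}\|_p.
\]

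To establish this, I will change variables via $u = \psi(x) := x + sh\varphi(x)$ on $\Dom_{kh}$. Two ingredients are needed. First, the Jacobian is controlled: since $|s|\le k/2$ and \eqref{derphi} gives $|\varphi'(x)|\le 1/(kh)$ on $\Dom_{kh}$, we get $|sh\varphi'(x)|\le 1/2$, so $\psi'(x) = 1 + sh\varphi'(x) \in [1/2,3/2]$. Hence $\psi$ is a $C^1$ bijection of $\Dom_{kh}$ onto its image with $dx\le 2\,du$, and the image sits inside $[-1,1]$ because $|u|\le|x|+(k/2)h\varphi(x)\le 1$ on $\Dom_{kh}$. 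Second, the weight comparison: since $s\le k/2$ and $s\ge -k/2$ respectively,
\[
1 - x - \tfrac{kh}{2}\varphi(x) \le 1 - x - sh\varphi(x) = 1-u, \qquad 1 + x - \tfrac{kh}{2}\varphi(x) \le 1+u,
\]
and both factors on the left are non-negative on $\Dom_{kh}$. Because $r/2+\a\ge 0$ and $r/2+\b\ge 0$, raising to these non-negative powers preserves the inequality, so
\[
\weight(x) \le (1-u)^{r/2+\a}(1+u)^{r/2+\b} = \wab(u)\varphi^r(u).
\]

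Combining these two ingredients,
\[
\int_{\Dom_{kh}} \bigl[\weight(x)\bigr]^p\,\bigl|f^{(r)}(x+sh\varphi(x))\bigr|^p\,dx \le 2\int_{-1}^{1}\bigl[\wab(u)\varphi^r(u)\bigr]^p\,|f^{(r)}(u)|^p\,du,
\]
which is the desired bound for $p<\infty$; for $p=\infty$ the pointwise inequality $|\weight(x) f^{(r)}(x+sh\varphi(x))| \le \|\wab\varphi^r f^{(r)}\|_\infty$ follows directly from the weight comparison. Summing the $k+1$ terms and taking the supremum over $0\le h\le t$ yields \eqref{modbounded}.

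There is no single hard step — the argument is essentially bookkeeping around a linear substitution. The only place where a hypothesis is actually consumed is the pointwise weight comparison, where the non-negativity of $r/2+\a$ and $r/2+\b$ is used to pass from the inequalities between the factors $1\mp x - kh\varphi(x)/2$ and $1\mp u$ to the inequality between their $(r/2+\a,r/2+\b)$-powers; without this sign condition the comparison reverses and the scheme fails, which is consistent with the remark following Theorem~\ref{thm1.4}.
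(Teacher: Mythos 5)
Your proof is correct and follows essentially the same route as the paper's: both expand the symmetric difference into $k+1$ translates $f^{(r)}(x+(i-k/2)h\varphi(x))$, use the pointwise comparison $\weight(x)\le \wab(u)\varphi^r(u)$ at the shifted point (which is exactly the paper's observation that the ratio $B_r(x)\le 1$, and is where $r/2+\a\ge0$, $r/2+\b\ge0$ enter), and control the change of variables via $|\varphi'(x)|\le 1/(kh)$ on $\Dom_{kh}$ so that the Jacobian is bounded below by $1/2$. The only cosmetic difference is that you make the $p$-triangle inequality for $0<p<1$ explicit, which the paper absorbs into the constant $c(k,p)$.
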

\begin{proof}
In view of \ineq{larget}, we may limit ourselves to $t\le2/k$, and so $\Dom_{kh}\neq\emptyset$  if $0<h\le t$.
We set
$$u_i(x):=x+(i-k/2)h\varphi(x),\quad 0\le i\le k,
$$
and note that, for $x\in\Dom_{kh}$,
\begin{align*}
B_r(x)&:=\frac{\weight(x)}{w_{\a,\b}(u_i(x))\varphi^r(u_i(x))} \\ \nonumber
&=\left(\frac{1-u_i(x)-(k-i)h\varphi(x)}
{1-u_i(x)}\right)^{r/2+\a}\left(\frac{1+u_i(x)-ih\varphi(x)}{1+u_i(x)}\right)^{r/2+\b}\\&\le1\nonumber.
\end{align*}

 Therefore,
\begin{align*}
\lefteqn{   \norm{\weight(\cdot) f^{(r)}(u_i(\cdot))}{L_\infty(\Dom_{kh})}   }\\
&\quad =
\norm{ B_r(\cdot) w_{\a,\b}(u_i(\cdot))\varphi^r(u_i(\cdot))f^{(r)}(u_i(\cdot))}{L_\infty(\Dom_{kh})}\\
&\quad \le
\norm{\wab(u_i(\cdot))\varphi^r(u_i(\cdot))f^{(r)}(u_i(\cdot))}{L_\infty(\Dom_{kh})}\\
&\quad \le  \norm{\wab \varphi^rf^{(r)}}{\infty},
\end{align*}
that yields \ineq{modbounded} for $p=\infty$.

To apply the same arguments to the case $0< p<\infty$ we note
  that \ineq{derphi} yields $|\varphi'(x)| \le 1/(kh)$ for $x\in\Dom_{kh}$, so that
\[
u_i'(x)\ge 1 - |i-k/2|h |\varphi'(x)| \ge 1 - kh |\varphi'(x)|/2
\ge 1/2,\quad x\in\Dom_{kh},
\]
which implies
\[
\int_{\Dom_{kh}} |F(u_i(x))| dx\le2\int_{-1}^1 |F(u)|du ,
\]
for each   $F\in \L_1[-1,1]$.

Hence,
\begin{align*}
\norm{ \weight(\cdot) f^{(r)}(u_i(\cdot))}{L_p(\Dom_{kh})}^p
& \le
\norm{ w_{\a,\b}(u_i(\cdot))\varphi^r(u_i(\cdot))f^{(r)}(u_i(\cdot))}{L_p(\Dom_{kh})}^p\\
&\le
2\int_{-1}^1|\wab(x)\varphi^r(x)f^{(r)}(x)|^p dx\\
&=
2\norm{ \wab \varphi^rf^{(r)} }{p}^p.
\end{align*}
Thus,
\begin{align*}
\wkr(f^{(r)},t)_{\a,\b,\,p} &\le c \max_{0\le i\le k}  
\norm{  \weight(\cdot) f^{(r)}(u_i(\cdot))}{L_p(\Dom_{kh})} 
  \le c  
\norm{ \wab \varphi^rf^{(r)}}{p},
\end{align*}
and the proof is complete.
\end{proof}

\begin{remark} \label{remlocal}
The same proof yields a local version of \ineq{modbounded} as well. Namely, for each $h>0$ and $[a,b]\subseteq \Dom_{kh}$,
\[
\norm{ \weight(\cdot) \Delta_{h\varphi(\cdot)}^k (f^{(r)},\cdot)}{\Lp[a,b]}  \le c \norm{ \wab \varphi^r f^{(r)}}{\Lp(S)},
\]
where $S := \left[ a - kh\varphi(a)/2, b+kh\varphi(b)/2\right]$.
\end{remark}

We now show that the modulus $\wkr(f^{(r)},t)_{\a,\b,p}$ may be infinite for a function $f\in\B^r_p(\wab)$ if either
$r/2+\a <0$ or $r/2+\b <0$.

When $p=\infty$, this is obvious. Indeed, suppose that $r/2+\b \ge 0$ and $-k \le r/2+\a<0$, and let $f(x) := (x-1)^{k+r}$. Then
$f\in\B^r_\infty(\wab)$ and $  \Delta_{h\varphi(x)}^k (f^{(r)},x)  \equiv c h^k \varphi^k(x)$. Hence,
$\weight(x)  \Delta_{h\varphi(x)}^k (f^{(r)},x)  \to \infty$ for $x$ such that $1-x-kh\varphi(x)/2 \to 0$. This implies that
$\wkr(f^{(r)},t)_{\a,\b,\infty}=\infty$, for all $t>0$. Note also that, by considering $f\in\C^r[-1,1]$ such that $f(x)=(1-|x|)^{k+r}$, $x\not\in[-1/2,1/2]$,  one can easily see that the same conclusion holds if  both $r/2+\a$ and $/2+\b$ are in $[-k,0)$.

When $p<\infty$, the arguments are not so obvious, but the conclusion is the same. The following theorem is valid.
\begin{theorem} \label{smallab}
Suppose that $k\in\N$, $r\in\N_0$, $\a\in\R$, $0< p < \infty$, and $r/2+\b <0$.
If $0<p<1$ and $r\geq 1$, we additionally assume that $r/2+\b <1-1/p$.
 Then there exists a function
 $f\in\B^r_p(\wab)$, such that, for all $t>0$,
\[
  \wkr(f^{(r)},t)_{\a,\b,p} = \infty .
\]
\end{theorem}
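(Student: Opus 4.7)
The plan is to construct an explicit $f\in\B_p^r(\wab)$ whose $r$-th derivative is concentrated near $x=-1$ so that $\varphi^rf^{(r)}$ is still controlled in $\Lp$ by $\wab$, while the weight $\wt_{kh}^{r/2+\a,r/2+\b}(x)=(1-u_k(x))^{r/2+\a}(1+u_0(x))^{r/2+\b}$ (which blows up at $x=-1+\mu(kh)$ whenever $r/2+\b<0$) spoils the $\Lp$-norm defining the modulus. Here $u_i(x):=x+(i-k/2)h\varphi(x)$. The construction naturally splits into two cases according to the severity of the exponent $(r/2+\b)p$.

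\emph{Case 1:} $(r/2+\b)p\le-1$. Take the polynomial $f(x):=(1+x)^{k+r}$, which trivially lies in $\B_p^r(\wab)$. The classical identity $\Delta_h^k(t^k,\cdot)\equiv k!h^k$ gives $\Delta_{h\varphi(x)}^k(f^{(r)},x)\equiv c_0(h\varphi(x))^k$ for some $c_0\ne0$, so for every $h\in(0,2/k]$,
\[
\bigl|\wt_{kh}^{r/2+\a,r/2+\b}(x)\Delta_{h\varphi(x)}^k(f^{(r)},x)\bigr|^p
\;\asymp\;(1+u_0(x))^{(r/2+\b)p}(1-u_k(x))^{(r/2+\a)p}(h\varphi(x))^{kp}.
\]
Since $du_0/dx$ is bounded above and below on $\Dom_{kh}$, the integral of $(1+u_0)^{(r/2+\b)p}$ near $u_0=-1$ diverges by the assumption $(r/2+\b)p\le-1$, hence $\wkr(f^{(r)},t)_{\a,\b,p}=\infty$ for every $t>0$.

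\emph{Case 2:} $-1/p<r/2+\b<0$ (with the sharpened hypothesis $r/2+\b<1-1/p$ used when $p<1$ and $r\ge1$). A single polynomial no longer works, so one superposes narrow characteristic bumps matched to a sequence of vanishing scales. Let $h_n:=1/n$, $\rho_n:=kh_n^2$, $\delta_n:=\rho_n n^{-L}$ for a parameter $L>0$ to be chosen large, and $I_n:=[-1+\rho_n,-1+\rho_n+\delta_n]$; these intervals are disjoint from some $N_0$ on. Set
\[
f^{(r)}(x):=\sum_{n\ge N_0}c_n\chi_{I_n}(x),
\qquad c_n:=n^{-2/p}\rho_n^{-r/2-\b}\delta_n^{-1/p},
\]
with $f$ determined by $r$-fold integration from $0$. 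By disjointness, $\|\wab\varphi^rf^{(r)}\|_p^p\asymp\sum n^{-2}<\infty$ and $f^{(r)}$ is locally integrable on $(-1,1)$ (only finitely many $I_n$ meet any compact set in $(-1,1)$), whence $f^{(r-1)}\in\AC_\loc(-1,1)$ and $f\in\B_p^r(\wab)$. For the modulus at scale $h=h_n$, near the left endpoint $-1+\mu(kh_n)$ of $\Dom_{kh_n}$ the point $u_1(x)$ traces $I_n$ while the remaining $u_j(x)$ miss every $I_m$; on that $x$-set $|\Delta^k_{h_n\varphi(x)}(f^{(r)},x)|=kc_n$ and $1+u_0(x)$ ranges over $(0,\delta_n)$, so the change of variable $\tau:=1+u_0(x)$ gives
\[
\bigl\|\wt_{kh_n}^{r/2+\a,r/2+\b}(\cdot)\Delta_{h_n\varphi(\cdot)}^k(f^{(r)},\cdot)\bigr\|_p^p
\;\gtrsim\;c_n^pk^p\!\int_0^{\delta_n}\!\tau^{(r/2+\b)p}d\tau
\;\asymp\;n^{-2+L|r/2+\b|p},
\]
which tends to $\infty$ once $L>2/(|r/2+\b|p)$. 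Since $h_n\to0$, this forces $\wkr(f^{(r)},t)_{\a,\b,p}=\infty$ for every $t>0$.

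The main obstacle is the non-cancellation analysis in Case 2: at the scale $h_n$ the bumps $c_m\chi_{I_m}$ with $m\ne n$ could in principle contribute to $\Delta^kf^{(r)}$ on the same critical $x$-set around $-1+\mu(kh_n)$ and cancel the $c_n$-term, and showing that the parameters $\rho_n,\delta_n,c_n$ can be coupled so that this cannot happen is the technical heart of the argument. It is precisely here that the sharpened assumption $r/2+\b<1-1/p$ (when $p<1$ and $r\ge1$) is consumed, to guarantee enough room to push the neighboring bumps away from the critical $x$-set without losing the $\B_p^r$-summability of the series.
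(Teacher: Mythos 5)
Your proposal has two genuine problems. In Case 1, the claim that $f(x)=(1+x)^{k+r}$ ``trivially lies in $\B_p^r(\wab)$'' is false in general: near $-1$ one has $\wab\varphi^r f^{(r)}\asymp(1+x)^{k+r/2+\b}$, which is not in $\Lp$ whenever $(k+r/2+\b)p\le-1$ (the theorem allows $\b$ arbitrarily negative), and near $+1$ one has $\wab\varphi^r f^{(r)}\asymp(1-x)^{r/2+\a}$, which fails whenever $(r/2+\a)p\le-1$ (the theorem allows any $\a\in\R$). This is repairable (cut off away from $+1$ and replace the exponent $k$ by a suitable $m=m(\b,p)$), but as written the membership step is wrong for a whole range of admissible parameters.

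Case 2 contains the more serious error: the simultaneous localization ``$u_1(x)$ traces $I_n$ while $1+u_0(x)$ ranges over $(0,\delta_n)$'' is false for your parameters. At the left endpoint $x_*$ of $\Dom_{kh_n}$ one has exactly $1+u_1(x_*)=h_n\varphi(x_*)=\frac{4kh_n^2}{4+k^2h_n^2}=\rho_n-\frac{k^3h_n^4}{4+k^2h_n^2}$, so $u_1(x_*)$ lies \emph{below} $I_n$ at distance $\asymp\rho_n^2$; since $u_0'(x),u_1'(x)\in[1/2,3/2]$ on $\Dom_{kh_n}$, by the time $u_1(x)$ enters $I_n$ one already has $1+u_0(x)\asymp\rho_n^2$, which is far larger than $\delta_n=\rho_n n^{-L}$ (in Case 2 your $L$ must exceed $2/(|r/2+\b|p)>2$). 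Redoing your bound with the true size gives $c_n^p\rho_n^{2(r/2+\b)p}\delta_n\asymp n^{-2+2|r/2+\b|p}$, which stays bounded precisely when $(r/2+\b)p>-1$, i.e.\ throughout Case 2 — the divergence collapses. Moreover the non-cancellation of the other nodes, which you acknowledge as the ``technical heart,'' is left unproved, and your guess about where the hypothesis $r/2+\b<1-1/p$ is consumed is off: your bounded bumps never need it, and in the paper it serves only to make $f^{(r)}$ locally integrable so that $f^{(r-1)}\in\AC_\loc$. The paper avoids your difficulties with a single construction (no case split): $f^{(r)}(x)=(x+1-\e_n)^{-r/2-\b-1/p}$ on $J_n=[-1+\e_n,-1+\e_n(1+2^{-n})]$, a singularity of exactly critical order, together with the exact alignment $x_n=-1+\frac k2\e_n$, $h_n=\e_n/\varphi(x_n)$, which forces $u_0(x_n)=-1$ and $u_1(x_n)=-1+\e_n$; then $(1+u_0(x))^{r/2+\b}\ge(1+u_1(x)-\e_n)^{r/2+\b}$ and the integrand is $\gtrsim(1+u_1(x)-\e_n)^{-1}$, non-integrable, so the modulus is infinite already at the single scale $h_n$ — no quantitative balance of bump heights and widths is needed, and it is exactly this alignment that your parameter choice gets wrong.
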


\begin{proof} 
Let $\{\e_n\}_{n=0}^\infty$ be a decreasing sequence of positive numbers, tending to zero,  such that $\e_0<1/(2k)$ and
$$
(2+k)\e_n<\e_{n-1},\quad n\in\mathbb{N}.
$$
Define
\[
J_n := \left[-1+ \e_n, -1+  \e_n(1+2^{-n}) \right] .
\]
Now,  let
 $f$ be such that
 \[
 f^{(r)}(x) :=
 \begin{cases}
 \left(x+1-\e_n  \right)^{-r/2-\b-1/p}, & \text{if $x\in J_n$  for some $n\in\N$,}\\
 0, & \text{otherwise} ,
 \end{cases}
 \]
Note that, in the case $r\geq 1$, since $-r/2-\b-1/p+1 >0$, the function $f^{(r-1)} (x) = \int_0^x f^{(r)}(u) du$ is locally absolutely continuous on $(-1,1)$.

Now,
\begin{align*}
2^{- |r/2+\a|p}\norm{\wab \varphi^r f^{(r)}}{p}^p & \le
 \sum_{n=1}^\infty \int_{J_n} |(1+x)^{r/2+\b} f^{(r)}(x)|^p dx \\
&
\le \sum_{n=1}^\infty \e_n^{(r/2+\b)p} \int_{J_n} |f^{(r)}(x)|^p dx \\
& =
  \sum_{n=1}^\infty \e_n^{(r/2+\b)p} \int_{0}^{\e_n2^{-n}} t^{-(r/2+\b)p-1} dt \\
& \le
c \sum_{n=1}^\infty   2^{(r/2+\b)np} <\infty .
\end{align*}
Hence, $f\in\B_p^r(\wab)$.

We now let
\[
x_n:=-1+\frac k2\e_n , \quad  h_n:=\frac{\e_n}{\varphi(x_n)}, \andd I_{k,n}:=[x_n,x_n+\e_n] ,
\]
so that
\[
\Dom_{kh_n}=[x_n,-x_n]\quad\text{and}\quad h_n<\sqrt{2\e_n}\to0,\quad n\to\infty.
\]
Since $\varphi(x)\ge\varphi(x_n)$, $|x|\le|x_n|$, we conclude that, for any $x\in I_{k,n} \subset [x_n,-x_n]$,
\begin{align*}
x-\left(\frac{k}2-2\right)h_n\varphi(x)&=x-\frac{k}2h_n\varphi(x)+2h_n\varphi(x)\ge-1+2h_n\varphi(x)\\
&\ge-1+2h_n\varphi(x_n)=-1+2\e_n>-1+\e_n(1+2^{-n}).
\end{align*}
Now, since $\varphi$ is concave and $\varphi(-1)=0$, we have
\[
\varphi(x_n+\e_n)<\frac{x_n+\e_n+1}{x_n+1}\varphi(x_n)=\left(1+\frac2k\right)\varphi(x_n),
\]
and so, for all $x\in I_{k,n}$,
\begin{align*}
x+\frac{k}2h_n\varphi(x)&\le x_n+\e_n+\frac k2h_n\varphi(x_n+\e_n)\le
 x_n+\e_n+\left(1+\frac{k}2\right)h_n\varphi(x_n)\\
 &=-1+(2+k)\e_n<-1+\e_{n-1}.
\end{align*}
If $k\ge2$, this implies that, for all $2\le i \le k$ and $x\in I_{k,n}$,
$$
f^{(r)}(x+(i-k/2)h_n \varphi(x))=0  .
$$
Now,  denote
\[
y(x):=x+(1- k/2)h_n\varphi(x)
\]
and observe that
\be \label{derivy}
\frac 12 < y'(x)   < \frac 32 ,  \quad x \in [x_n, -x_n] ,
\ee
since, if $|x|\le |x_n|$, then it follows from \ineq{derphi} that
\be \label{auxder}
h_n| \varphi'(x)|  <   1/k
\ee
and so
\[
|y'(x)-1| \le \frac{k}{2} h_n | \varphi'(x)|    < \frac 12 .
\]

For all $k\in\N$, using $\norm{f_1+f_2}{p} \leq \max\{1, 2^{1/p-1}\}\left( \norm{f_1}{p} + \norm{f_2}{p}\right)$, we obtain
\begin{align*}
&2^{|\a+r/2|}\norm{ \wt_{kh_n}^{r/2+\a,r/2+\b}(\cdot)  \Delta_{h_n\varphi}^k (f^{(r)},\cdot)}{p}\\& \geq
2^{|\a+r/2|}\norm{\wt_{kh_n}^{r/2+\a,r/2+\b}(\cdot) \Delta_{h_n\varphi}^k (f^{(r)},\cdot)}{\Lp(I_{k,n})}\\
 & \ge
\norm{ (1+y(\cdot)-h_n\varphi(\cdot))^{r/2+\b}  \left( f^{(r)}(y(\cdot)-h_n\varphi(\cdot)) - k f^{(r)}(y(\cdot) )\right)  }{\Lp(I_{k,n})} \\
&\geq
k \min\{1, 2^{1-1/p}\}  \norm{ (1+y(\cdot)-h_n\varphi(\cdot))^{r/2+\b} f^{(r)}(y(\cdot))}{\Lp(I_{k,n})} - c \norm{\wab \varphi^r f^{(r)}}{p}\\
&\geq
k \min\{1, 2^{1-1/p}\} \norm{ (1+y(\cdot)-\e_n)^{r/2+\b} f^{(r)}(y(\cdot))}{\Lp(I_{k,n})} - c \norm{\wab \varphi^r f^{(r)}}{p},\\
 \end{align*}
where, in the second last inequality, we used the fact that $y'(x) -h_n \varphi'(x)= 1- kh_n\varphi'(x)/2 \sim 1$ that follows from \ineq{auxder}, and
in the last inequality, we used  that $r/2+\b<0$ and that $\e_n\le h_n\varphi(x)$ for all $x\in[x_n,-x_n]$.

In order to complete the proof, we show that
\[
H:=  \norm{ (1+y(\cdot)-\e_n)^{r/2+\b} f^{(r)}(y(\cdot))}{\Lp(I_{k,n})} = \infty .
\]
Assume to the contrary  that $H<\infty$. Since
$$
y(x_n)=-1+\e_n\le y(x)<-1+\e_{n-1},\quad x\in I_{k,n},
$$
there is a positive number $a_n\le \e_n$, such that
$$
f^{(r)}(y(x))=(1-\e_n+y(x))^{-r/2-\b -1/p},\quad x\in[x_n,x_n+a_n].
$$
Therefore,
$$
H^p
\ge \int_{x_n}^{x_n+a_n}
(1-\e_n+y(x) )^{-1} dx.
$$
Using the change of variable $v=u(x):=1-\e_n+y(x)$ and \ineq{derivy}
we get
$$
H^p
\ge \frac23\int_{x_n}^{x_n+a_n}
(u(x) )^{-1}u'(x) dx= \frac23\int_{0}^{u(x_n+a_n)}\frac{dv}v=\infty,
$$
that contradicts  our assumption $H<\infty$.

Thus, we have found a sequence $\{h_n\}_{n=0}^\infty$ of positive numbers, tending to zero, such that
$  \norm{ \wt_{kh_n}^{r/2+\a, r/2+\b}
\Delta_{h_n\varphi}^k (f^{(r)},\cdot)}{p}=\infty$, for all $n\in\mathbb{N}$. This means that $\wkr(f^{(r)},t)_{\a,\b,p} = \infty$, for all $t>0$.
\end{proof}

We now state some properties of the Jacobi weights that we  need in several proofs below.

\begin{proposition} \label{propxu}
For any $\a,\b\in\R$, $x\in\Dom_{2\delta}$ and $u\in[x-\delta\varphi(x)/2,x+\delta\varphi(x)/2]$,
\be \label{wabxu}
2^{-|\a|-|\b|} \wab(u) \leq \wab(x) \leq 2^{|\a|+|\b|} \wab(u),
\ee
in particular,
\be \label{varphixu}
\varphi(u)/2 \leq \varphi(x) \leq 2\varphi(u).
\ee
Also,
\be \label{weightxu}
 2^{-|\a|-|\b|} \wab(x) \leq \wt_\delta^{\a,\b}(x) \leq  2^{|\a|+|\b|} \wab(x), \quad x\in\Dom_{2\delta}.
\ee
\end{proposition}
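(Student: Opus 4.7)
The plan is to reduce everything to the single elementary inequality
\[
\delta\varphi(x)\le 1-|x|,\qquad x\in\Dom_{2\delta},
\]
which is exactly the defining condition of $\Dom_{2\delta}$ rewritten (since $1-(2\delta)\varphi(x)/2\ge|x|$ is the same as $1-|x|\ge\delta\varphi(x)$). Because $1-x\ge 1-|x|$ and $1+x\ge 1-|x|$, this immediately yields
\[
\delta\varphi(x)/2\le (1-x)/2\andd \delta\varphi(x)/2\le(1+x)/2,\qquad x\in\Dom_{2\delta}.
\]

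For \ineq{wabxu}, write $1-u=(1-x)-(u-x)$ where $|u-x|\le\delta\varphi(x)/2$. The bounds above give $(1-u)/(1-x)\in[1/2,3/2]$ and, symmetrically, $(1+u)/(1+x)\in[1/2,3/2]$. Raising each ratio to the power $\alpha$ (resp.\ $\beta$) and noting that for either sign of the exponent the ratio stays within $[2^{-|\alpha|},2^{|\alpha|}]$ (resp.\ $[2^{-|\beta|},2^{|\beta|}]$, since $3/2\le 2$), multiplying the two factors gives
\[
2^{-|\a|-|\b|}\le \wab(u)/\wab(x)\le 2^{|\a|+|\b|},
\]
which is \ineq{wabxu}. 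The inequality \ineq{varphixu} is simply the special case $\a=\b=1/2$.

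For \ineq{weightxu}, apply the very same idea, but compare $\wt_\delta^{\a,\b}(x)$ to $\wab(x)$ directly by examining the ratios $(1-x-\delta\varphi(x)/2)/(1-x)$ and $(1+x-\delta\varphi(x)/2)/(1+x)$. By the key inequality these ratios lie in $[1/2,1]$, so raising to an exponent of either sign keeps them in $[2^{-|\a|},2^{|\a|}]$ and $[2^{-|\b|},2^{|\b|}]$ respectively. Multiplying yields \ineq{weightxu}.

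There is no real obstacle; the only subtlety worth flagging is that the condition $x\in\Dom_{2\delta}$ (rather than $\Dom_\delta$) is essential, because it is precisely the factor of $2$ that turns the defining inequality $1-\delta\varphi(x)/2\ge|x|$ into $\delta\varphi(x)/2\le(1-|x|)/2$, providing the slack needed to keep $u$ bounded away from $\pm1$ relative to $x$ and to keep the modified weight $\wt_\delta^{\a,\b}$ comparable to $\wab$.
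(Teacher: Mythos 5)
Your proof is correct and follows essentially the same route as the paper: both arguments boil down to the observation that $x\in\Dom_{2\delta}$ gives $\delta\varphi(x)\le 1-|x|\le\min\{1-x,1+x\}$, so that $1\mp u$ and $1\mp x$ (and likewise the shifted factors in $\wt_\delta^{\a,\b}$) agree up to a factor of $2$, after which raising to the exponents $\a,\b$ of either sign yields the constants $2^{|\a|+|\b|}$. No gaps.
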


\begin{proof}
For $x\in\Dom_{2\delta}$ and
$u\in[x-\delta\varphi(x)/2,x+\delta\varphi(x)/2]$, we have
\[
 (1-u)/2\le (1-x+\delta\varphi(x)/2)/2\le1-x\le2(1-x-\delta\varphi(x)/2)\le2(1-u)
\]
and
\[
 (1+u)/2\le (1+x+\delta\varphi(x)/2)/2\le 1+x\le 2(1+x-\delta\varphi(x)/2)\le2(1+u).
\]
This immediately yields \ineq{wabxu}.
Now,
\begin{align*}
\wt_\delta^{\a,\b}(x)
& =
w_{\a,0} (x+\delta\varphi(x)/2) w_{0,\b} (x-\delta\varphi(x)/2) \\
&\leq
2^{|\a|} w_{\a,0} (x) 2^{|\b|}w_{0,\b}(x) = 2^{|\a|+|\b|} \wab(x)
\end{align*}
and
\begin{align*}
\wab(x) &=  w_{\a,0} (x)  w_{0,\b}(x) \leq 2^{|\a|} w_{\a,0} (x+\delta\varphi(x)/2)  2^{|\b|}w_{0,\b} (x-\delta\varphi(x)/2) \\
&=
2^{|\a|+|\b|}\wt_\delta^{\a,\b}(x)
\end{align*}
complete the proof.
\end{proof}

\begin{lemma}\label{lem1}
If $k\in\N$, $r\in\N_0$, $r/2+\a\geq 0$, $r/2+\b \geq 0$, $0< p < \infty$ and
 $f\in\B^r_p(\wab)$, then
$$
\lim_{t\to0^+} \wkr(f^{(r)},t)_{\a,\b,p} = 0 .
$$
\end{lemma}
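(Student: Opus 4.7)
The plan is a density argument combined with \lem{uppernorm}. Since $\wab(x)\varphi^r(x)$ is bounded above and bounded away from $0$ on every compact subset of $(-1,1)$, the space $C_c^\infty(-1,1)$ is dense in the weighted space $\{F : \norm{\wab \varphi^r F}{p}<\infty\}$ for each $0<p<\infty$: one truncates $f^{(r)}$ to $f^{(r)}\chi_{[-1+\eta,1-\eta]}$ (the tail of the weighted $\Lp$ (quasi)norm vanishes as $\eta\to 0^+$ by dominated convergence) and then mollifies on the truncation interval, where the weight is equivalent to $1$. Thus, given $\e>0$, I fix $G\in C_c^\infty(-1,1)$ with $\norm{\wab\varphi^r(f^{(r)}-G)}{p}<\e$.

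Next, define $\bar g \in \B^r_p(\wab)$ by $r$-fold antidifferentiation of $G$ starting at $0$ (so $\bar g^{(r)}=G$), and set $\bar f := f-\bar g \in \B^r_p(\wab)$, so that $\bar f^{(r)}=f^{(r)}-G$. Writing $\tp := \min\{p,1\}$, the linearity of the difference operator together with the quasi-norm inequality $\norm{A+B}{p}^{\tp}\le\norm{A}{p}^{\tp}+\norm{B}{p}^{\tp}$ and the elementary $\sup_h(a+b)\le \sup_h a+\sup_h b$ give
\[
\wkr(f^{(r)},t)_{\a,\b,p}^{\tp} \le \wkr(\bar f^{(r)},t)_{\a,\b,p}^{\tp} + \wkr(\bar g^{(r)},t)_{\a,\b,p}^{\tp} .
\]
By \lem{uppernorm}, the first term on the right is at most $c\norm{\wab\varphi^r(f^{(r)}-G)}{p}^{\tp}<(c\e)^{\tp}$, uniformly in $t>0$.

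For the second term, note that $x\in\Dom_{kh}$ forces $0\le 1\pm x - kh\varphi(x)/2\le 2$, so, since $r/2+\a,r/2+\b\ge0$, the weight satisfies $\weight(x)\le 2^{r+\a+\b}$ on $\Dom_{kh}$. Combined with the standard bound $|\Delta_{h\varphi(x)}^k G(x)|\le(h\varphi(x))^k\norm{G^{(k)}}{\infty}\le h^k\norm{G^{(k)}}{\infty}$ (valid since $G\in C^\infty$) and $|\Dom_{kh}|\le 2$, this yields
\[
\wkr(G,t)_{\a,\b,p} \le c\,t^k\norm{G^{(k)}}{\infty} \to 0, \quad t\to 0^+ .
\]
Hence $\limsup_{t\to 0^+}\wkr(f^{(r)},t)_{\a,\b,p}^{\tp}\le(c\e)^{\tp}$, and since $\e>0$ is arbitrary the lemma follows.

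The step I expect to require the most care is the density claim when $0<p<1$, since one is working in a quasi-Banach space and cannot invoke standard Banach-space duality; nevertheless it goes through by the familiar truncation-plus-mollification argument, using that continuous compactly supported functions are dense in $L_p(-1,1)$ for all $0<p<\infty$ and that mollifications of such functions converge to them uniformly. The remaining ingredients (linearity of $\Delta_h^k$, the bound on finite differences of $C^k$ functions, and the trivial estimate on $\weight$) are routine.
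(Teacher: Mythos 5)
Your argument is correct and follows essentially the same route as the paper: both split $f^{(r)}$ into a well-behaved approximant plus a remainder that is small in the weighted $L_p$ (quasi)norm, control the remainder's modulus uniformly in $t$ via Lemma~\ref{uppernorm}, and use the uniform bound $\weight(x)\le c$ on $\Dom_{kh}$ to handle the approximant. The only difference is that the paper merely truncates $f^{(r)}$ to $\Dom_\delta$ and invokes the convergence $\omega_k^\varphi(g^{(r)},t)_p\to 0$ for $g^{(r)}\in L_p$, whereas you mollify as well and obtain the quantitative bound $c\,t^k\|G^{(k)}\|_\infty$ for the smooth part, which makes your version slightly more self-contained and handles the quasi-norm subadditivity for $0<p<1$ explicitly.
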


\begin{proof} Let $\epsilon>0$. For convenience, denote $C_p :=  \max\{1, 2^{1/p-1}\}$.   Since $f\in\B^r_p(\wab)$, there is $\delta>0$  such that
\[
\norm{ \wab \varphi^r f^{(r)}}{\Lp( [-1,1]\setminus \Dom_{\delta})} <  \frac\epsilon{2c_0 C_p} ,
\]
where $c_0$ is the constant $c$ from the statement of \lem{uppernorm}.
Set
\[
g^{(r)}(x):=\begin{cases} f^{(r)}(x),&\quad\text{if}\quad
x\in\Dom_{\delta},\\
0,&\quad\text{otherwise,}\end{cases}
\]
and note that,
since $g^{(r)}\in L_p[-1,1]$, there exists $t_0>0$ such that
\[
  \omega_k^\varphi(g^{(r)},t)_p< \epsilon/(2^{|\a-\b|+1}C_p),\quad 0<t\le t_0.
\]
Using \lem{uppernorm} and the fact that, if $r/2+\a, r/2+\b \geq 0$ and $x\in\Dom_{kh}$, then $\weight(x)\leq2^{|\a-\b|}$, we have
\begin{align*}
\wkr(f^{(r)},t)_{\a,\b,p} &\le C_p\wkr(g^{(r)},t)_{\a,\b,p} + C_p\wkr(f^{(r)}-g^{(r)},t)_{\a,\b,p} \\
& \le
2^{|\a-\b|}C_p \omega_k^\varphi(g^{(r)},t)_p + c_0 C_p  \norm{ \wab \varphi^r \left( f^{(r)}-g^{(r)} \right) }{p} \\
&<
\epsilon/2 + c_0 C_p  \norm{ \wab \varphi^r   f^{(r)}   }{\Lp( [-1,1]\setminus \Dom_{\delta})} \\
&\le   \epsilon,
\end{align*}
if $0<t\le t_0$.
 This completes the proof.
\end{proof}

We now turn our attention to the case $p=\infty$. It is clear that, in order for $\lim_{t\to0^+} \wkr(f^{(r)},t)_{\a,\b,\infty} = 0$ to hold we certainly need that $f\in\C^r(-1,1)$, but this condition is not sufficient. If $f\in\B^r_\infty(\wab)\cap \C^r(-1,1)$ and $r/2+\a, r/2+\b \geq 0$,  then we can only conclude that $\wkr(f^{(r)},t)_{\a,\b,\infty} < \infty$ for $t>0$. For example, if at least one of $r/2+\a$ and $r/2+\b$ is not zero, and $f$ is such that $f^{(r)}(x) := \wab^{-1}(x)\varphi^{-r}(x)$, $r\in\N_0$, then
$f\in\B^r_\infty(\wab) \cap\C^r(-1,1)$ and $\wkr(f^{(r)},t)_{\a,\b,\infty} \geq 1$.

\begin{lemma}\label{leminfinity}
If $k\in\N$, $r\in\N_0$, $r/2+\a\geq 0$, $r/2+\b \geq 0$,  and
 $f\in\B^r_\infty(\wab)\cap \C^r(-1,1)$, then
\be \label{maincase}
\lim_{t\to0} \wkr(f^{(r)},t)_{\a,\b,\infty} = 0
\ee
if and only if
\begin{itemize}
\item    Case $1$.  $r/2+\a>0$ and $r/2+\b>0$:
\be\label{case1}
\lim_{x\to\pm1}w_{\a,\b}(x)\varphi^r(x)f^{(r)}(x)=0.
\ee
\item Case $2$. $r/2+\a>0$ and $r/2+\b=0$:
\be\label{case2}
\lim_{x\to 1}w_{\a,\b}(x)\varphi^r(x)f^{(r)}(x)=0, \andd   f^{(r)} \in \C[-1,1).
\ee
\item Case $3$. $r/2+\a=0$ and $r/2+\b>0$:
\be\label{case3}
\lim_{x\to -1}w_{\a,\b}(x)\varphi^r(x)f^{(r)}(x)=0, \andd f^{(r)} \in \C(-1,1].
\ee
\item Case $4$. $r/2+\a=0$ and $r/2+\b=0$:
\be\label{case4}
\text{$f^{(r)} \in \C[-1,1]$}.
\ee
\end{itemize}
\end{lemma}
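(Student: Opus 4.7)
I would base the proof on \thm{thm1.4}, which for $p=\infty$ gives $\w_{k,r}^\varphi(f^{(r)},t)_{\a,\b,\infty}\sim R^\varphi_{k,r}(f^{(r)},n^{-k})_{\a,\b,\infty}$ with $n\sim 1/t$, so \eqref{maincase} is equivalent to the existence of (near-best) polynomials $P_n\in\Pn$ with $\|\wab\varphi^r(f^{(r)}-P_n^{(r)})\|_\infty\to 0$; the four cases in the conclusion then correspond to the four possible sign patterns for the exponents $(r/2+\a,\,r/2+\b)$ of the weight $\wab(x)\varphi^r(x)=(1-x)^{r/2+\a}(1+x)^{r/2+\b}$. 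For the necessity direction, at any endpoint whose corresponding exponent is strictly positive, $\wab\varphi^r P_n^{(r)}$ is a continuous function that vanishes in the limit at that endpoint, so the triangle inequality forces $\lim\wab\varphi^r f^{(r)}=0$ there; at an endpoint where the corresponding exponent is zero, the weight $\wab\varphi^r$ admits a positive lower bound on $[-1,1-\eta]$ or $[-1+\eta,1]$ for every $\eta>0$, so the weighted uniform convergence upgrades to unweighted uniform convergence on that half-interval and $f^{(r)}$ inherits continuity up to that endpoint. Combining these two types of conclusion over the four sign patterns yields exactly \eqref{case1}--\eqref{case4}.

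For the sufficiency, given the relevant endpoint conditions and $\varepsilon>0$, I would build $g^{(r)}\in\C[-1,1]$ with $\|\wab\varphi^r(f^{(r)}-g^{(r)})\|_\infty<\varepsilon$: in Case 4 take $g^{(r)}:=f^{(r)}$; in Cases 1--3, choose $\delta>0$ so small that $\wab\varphi^r|f^{(r)}|<\varepsilon$ in a $\delta$-neighborhood of each endpoint whose weight exponent is positive (this is where the boundary limit hypothesis is used), fix a continuous cutoff $\eta_\delta:[-1,1]\to[0,1]$ equal to $1$ outside those $\delta$-neighborhoods and equal to $0$ on their $\delta/2$-subneighborhoods, and set $g^{(r)}:=f^{(r)}\eta_\delta$. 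The continuity assumption on $f^{(r)}$ over the remaining portion of $[-1,1]$ makes $g^{(r)}$ continuous on $[-1,1]$, and
\[
\|\wab\varphi^r(f^{(r)}-g^{(r)})\|_\infty=\|\wab\varphi^r f^{(r)}(1-\eta_\delta)\|_\infty<\varepsilon.
\]
Sub-additivity of the modulus together with \lem{uppernorm} then gives
\[
\w_{k,r}^\varphi(f^{(r)},t)_{\a,\b,\infty}\le \w_{k,r}^\varphi(g^{(r)},t)_{\a,\b,\infty}+c\varepsilon,
\]
and the first summand tends to zero because $g^{(r)}$ is uniformly continuous on $[-1,1]$ while $\wt_{kh}^{r/2+\a,r/2+\b}$ is uniformly bounded on $\Dom_{kh}$ under the standing assumption $r/2+\a,\,r/2+\b\ge 0$. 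Letting $\varepsilon\to 0$ concludes \eqref{maincase}.

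I expect the main obstacle to be the sufficiency side of Cases 1--3: since $f^{(r)}$ may be unbounded near an endpoint at which the weight $\wab\varphi^r$ vanishes, the cutoff has to be placed strictly inside $(-1,1)$ at a point where $f^{(r)}$ is still continuous yet $\wab\varphi^r|f^{(r)}|$ is already uniformly small, and one has to verify that the truncated product $f^{(r)}\eta_\delta$ extends continuously up to both endpoints of $[-1,1]$ so that the standard ``modulus of a continuous function tends to zero'' argument can be invoked.
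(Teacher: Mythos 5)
Your argument is correct, and there is no circularity in invoking \thm{thm1.4}, since its proof (Sections 3--4) does not use this lemma. The sufficiency half is essentially the paper's idea in a different packaging: where you truncate $f^{(r)}$ by a cutoff to get a globally continuous $g^{(r)}$ with $\norm{\wab\varphi^r(f^{(r)}-g^{(r)})}{\infty}<\e$ and then apply subadditivity together with \lem{uppernorm} (exactly the scheme the paper uses for \lem{lem1} when $p<\infty$), the paper instead splits the supremum over $x\in\Dom_{kh}$ directly: for $x$ away from the critical endpoint the interval $J_x=[x-kh\varphi(x)/2,x+kh\varphi(x)/2]$ stays in a compact subinterval where the ordinary modulus $\omega_k(f^{(r)},\cdot)$ controls the difference, and for $x$ near that endpoint it bounds $|\Delta^k_{h\varphi(x)}(f^{(r)},x)|$ by $2^k\sup|f^{(r)}|$ over $J_x$ and absorbs this into the hypothesis $\wab\varphi^r|f^{(r)}|<2^{-k}\e$ there; the two are interchangeable, and your explicit attention to the continuity of $f^{(r)}\eta_\delta$ up to $\pm1$ correctly handles the only delicate point. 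The necessity half is where you genuinely diverge: the paper gives a short self-contained computation --- for interior continuity it observes that $\lim_{t\to0}\omega_k(f^{(r)},t;[-1,0])=0$, and for the endpoint limit it chooses $\theta$ with $\theta+kh\varphi(\theta)/2=x$ so that $f^{(r)}(x)$ differs from $\Delta^k_{h\varphi(\theta)}(f^{(r)},\theta)$ by a combination of values of $f^{(r)}$ on a fixed compact subinterval, whence $|\wab(x)\varphi^r(x)f^{(r)}(x)|\le\wkr(f^{(r)},h)_{\a,\b,\infty}+\wab(x)\varphi^r(x)A_h$ and $\limsup_{x\to1}\le\e$ --- whereas you route through the realization functional $R^\varphi_{k,r}$ and near-best polynomials $P_n$. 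Your route is shorter once \thm{thm1.4} is available, but it imports the Jackson-type machinery of Section 4 (including \cite{stok}), while the paper's argument keeps Section 2 elementary and independent of those results; both the positive-exponent step (the weight kills the bounded polynomial at the endpoint) and the zero-exponent step (the weight is bounded below on a half-interval, so weighted uniform convergence upgrades to uniform convergence and continuity passes to the limit) are sound.
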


Note that  since, for $f\in B_\infty^r(\wab)$, $f^{(r)}$ may not be defined at $\pm 1$, when we write $f^{(r)} \in \C[-1,1)$, for example, we mean that $f^{(r)}$ can be defined at $-1$ so that it becomes continuous there.

\begin{proof}
Since $\wkr(f^{(r)},t)_{\a,\b,\infty} = \w_{k,0}^\varphi(g,t)_{r/2+\a, r/2+\b, \infty}$ with $g:= f^{(r)}$, without loss of generality, we may assume that $r=0$ throughout this proof. Note also that Case 4 is trivial since $\w_{k,0}^\varphi(f,t)_{0, 0, \infty} = \w^k_\varphi(f,t)_\infty$, the regular DT modulus, tends to $0$ as $t\to 0$ if and only if $f$ is uniformly continuous ($=$ continuous) on $[-1,1]$.

We now prove the lemma in Case 2, all other cases being similar.

Given $\e>0$, assume that \ineq{case2} holds, and let $\delta=\delta(\e) \in (0,1)$   be such that
\[
\wab(x) |f (x)|<2^{-k}\e,\quad x\in [1-\delta,1).
\]
Denote
\[
\omega(t):=\omega_k(f ,t;[-1,1-\delta/3]),
\]
the regular $k$th modulus of smoothness of $f$ on the interval $[-1,1-\delta/3]$, and note that
 $\lim_{t\to 0} \omega(t) = 0$ because of the continuity of $f$ on this interval. Thus, there exists $t_0>0$ such that  $t_0 \leq 2\delta/(3k)$ and $\omega(t_0)<\e/2^{\a}$, and we fix $0<h\le t_0$.

For $x\in\Dom_{kh}$, denote $J_x:=[x-kh\varphi(x)/2,x+kh\varphi(x)/2]\subseteq[-1,1]$.
If $x\le1-2\delta/3$, then $J_x\subseteq[-1,1-\delta/3]$. Hence,
\be\label{tmp3}
|\wt_{kh}^{\a,\b}(x)\Delta^k_{h\varphi(x)}(f,x)|\le2^{\a}|\Delta^k_{h\varphi(x)}(f,x)|<\e.
\ee
If, on the other hand, $x>1-2\delta/3$, then $J_x\subseteq[1-\delta,1]$. Hence, for some $\theta\in J_x$,
\begin{align}\label{tmp4}
|\wt_{kh}^{\a,\b}(x)\Delta^k_{h\varphi(x)}(f,x)|&\le 2^k \wt_{kh}^{\a,\b}(x)|f(\theta)|
 \le 2^k \wab(\theta) |f (\theta)|<\e.
\end{align}
Combining \ineq{tmp3} and \ineq{tmp4}, we get \ineq{maincase}.

Conversely, assume that $\a>0$, $\b=0$ and \ineq{maincase} holds. Observing that
$\lim_{t\to 0} \omega_k(f ,t;[-1,0])= 0$,
 we conclude that $f$ is uniformly continuous on $[-1,0]$, \ie $f\in\C[-1,1)$.
Also, given $\e>0$, fix $0<h<1/(2k)$  such that $\w_{k,0}^\varphi(f,h)_{\a,\b,\infty}<\e$. Let $x\in(3/4,1)$, and let $\theta\in(1/2,x)$ be such that
$\theta+kh\varphi(\theta)/2=x$. Then,
\[
|f(x)-\Delta^k_{h\varphi(\theta)}(f,\theta)|\le(2^k-1)\|f\|_{\C[0,1-h^2/4]}=:A_h,
\]
which yields
\begin{align*}
|\wab(x)f(x)|  
&\le\frac{\wab(x)}{\wt_{kh}^{\a,\b}(\theta)}|\wt_{kh}^{\a,\b}(\theta)\Delta^k_{h\varphi(\theta)}(f,\theta)|+\wab(x) A_h\\
&\le \w_{k,0}^\varphi(f,h)_{\a,\b,\infty}+\wab(x) A_h .
\end{align*}
%
Hence,
$\limsup_{x\to1}|\wab(x) f (x)|\le \e$,
and so
$\lim_{x\to1}\wab(x) (x)f (x)=0$.
\end{proof}


\sect{Proof of the upper estimate in \thm{thm1.4}}\label{sec3}

We devote this section to proving that the moduli defined by \ineq{wkrdefinition} can be estimated from above by the appropriate $K$-functionals from Definition~\ref{kfunctionals}.

First, we need the following lemma.

\begin{lemma} \label{lem:estimate}
Let $k\in\N$, $r\in\N_0$, $r/2+\a\geq 0$, $r/2+\b\geq 0$ and $1\le p\le\infty$. If $g\in\B^{r+k}_p(\wab)$, then
\be\label{estimate}
\wkr(g^{(r)},t)_{\a,\b,\,p}\le c t^k \norm{ \wab \varphi^{k+r}g^{(k+r)}}{p} .
\ee
\end{lemma}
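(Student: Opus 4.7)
The plan is to express the symmetric difference as an iterated integral of $g^{(r+k)}$, reduce the estimate via Jensen, Fubini, and a change of variables to a pointwise weight comparison on the bulk $\Dom_{2kh}$, and then handle the thin boundary strip $\Dom_{kh}\setminus\Dom_{2kh}$ separately using the local bound provided by Remark~\ref{remlocal}.

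First, iterating the identity $\Delta_h F(x) = \int_{-h/2}^{h/2} F'(x+u)\,du$ yields
\[
\Delta_{h\varphi(x)}^k (g^{(r)},x)= (h\varphi(x))^k\int_{[0,1]^k} g^{(r+k)}(y(x,s))\,ds,
\]
where $y(x,s):=x+h\varphi(x)(s_1+\cdots+s_k-k/2)\in J_x:=[x-kh\varphi(x)/2,\,x+kh\varphi(x)/2]$. Applying Jensen's inequality on the probability space $[0,1]^k$ (for $1\le p<\infty$; the case $p=\infty$ is direct by taking $\sup$), multiplying by $\weight(x)^p$, integrating over $\Dom_{kh}$, and invoking Fubini, the task reduces to proving, uniformly in $s\in[0,1]^k$,
\[
\int_{\Dom_{kh}}\weight(x)^p\varphi^{kp}(x)\,|g^{(r+k)}(y(x,s))|^p\,dx \le c\,\|\wab\varphi^{k+r}g^{(k+r)}\|_p^p .
\]

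For fixed $s$, set $\sigma:=s_1+\cdots+s_k-k/2\in[-k/2,k/2]$ and change variables $y=y(x,s)$. By \ineq{derphi}, $|\varphi'(x)|\le 1/(kh)$ on $\Dom_{kh}$, so $dy/dx=1+h\varphi'(x)\sigma\in[1/2,3/2]$ and hence $dx\le 2\,dy$. The decisive pointwise estimate is
\[
\weight(x)\varphi^k(x)\le c\,\wab(y)\varphi^{k+r}(y),\quad x\in\Dom_{2kh},\ y\in J_x .
\]
Its proof starts from $1-y\ge 1-x-kh\varphi(x)/2$ and $1+y\ge 1+x-kh\varphi(x)/2$: combined with the hypotheses $r/2+\a\ge 0$, $r/2+\b\ge 0$, these give $(1-y)^{r/2+\a}(1+y)^{r/2+\b}\ge\weight(x)$, i.e.\ $\wab(y)\varphi^r(y)\ge\weight(x)$. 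Together with Proposition~\ref{propxu}, which yields $\varphi(x)\le 2\varphi(y)$ for $x\in\Dom_{2kh}$ and $y\in J_x$, this gives the displayed bound with $c=2^k$ and hence $\|\weight\Delta_{h\varphi}^k(g^{(r)},\cdot)\|_{L_p(\Dom_{2kh})}\le c\,h^k\|\wab\varphi^{k+r}g^{(k+r)}\|_p$.

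The main obstacle is the narrow boundary strip $\Dom_{kh}\setminus\Dom_{2kh}$ (of width $\sim h^2$ near $\pm 1$), on which the pointwise comparison fails because $\varphi(y)$ may be much smaller than $\varphi(x)\sim h$. For this strip I would invoke Remark~\ref{remlocal} to obtain
\[
\|\weight\,\Delta_{h\varphi}^k(g^{(r)},\cdot)\|_{L_p(\Dom_{kh}\setminus\Dom_{2kh})}\le c\,\|\wab\varphi^r g^{(r)}\|_{L_p(S_\partial)}
\]
on a slight enlargement $S_\partial$. Both sides of the target inequality are invariant under subtracting from $g$ a polynomial of degree $<k+r$ (since $\Delta^k$ annihilates polynomials of degree $<k$ and $g^{(r+k)}$ is unchanged), so WLOG $g^{(r)}$ has vanishing Taylor polynomial of order $k-1$ at a fixed point $c\in S_\partial$. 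The Taylor integral remainder then expresses $g^{(r)}$ as a $k$-fold integral of $g^{(r+k)}$, and, exploiting $\varphi\sim h$ on $S_\partial$ together with a weighted Hardy-type estimate, one obtains $\|\wab\varphi^r g^{(r)}\|_{L_p(S_\partial)}\le c\,h^k\|\wab\varphi^{k+r}g^{(k+r)}\|_p$. Combining the bulk and boundary estimates and taking the supremum over $0<h\le t$ completes the proof.
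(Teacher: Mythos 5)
Your argument is correct in outline and reaches the same estimate by a genuinely different mechanism on both of the key technical steps. The paper also starts from the $k$-fold integral identity \ineq{diffid} and the same weight transfer ($r/2+\a,r/2+\b\ge0$ gives $\weight(x)\le \wab(v)\varphi^r(v)$ on $J_x$), but then applies H\"older to the innermost integral, producing the factor $\F_q(x,k)$ built from $\norm{\varphi^{-k}}{L_q(\mathcal{A}(x,u))}$, and estimates $\F_q$ on $\Dom_{2kh}$ and on the caps $\Dom_{kh}\setminus\Dom_{2kh}$ by citing inequalities (4.10) and (4.19) of \cite{kls}; the whole proof thus stays in terms of $g^{(k+r)}$. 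You instead use Jensen plus a monotone change of variables $y=y(x,s)$ (justified by \ineq{derphi}, which gives $1/2\le dy/dx\le 3/2$) to get the bulk estimate on $\Dom_{2kh}$ in a self-contained and arguably cleaner way, and you switch strategy on the caps: annihilate a Taylor polynomial of $g^{(r)}$ of degree $k-1$, invoke Remark~\ref{remlocal}, and finish with a weighted Hardy-type bound. That cap strategy does work, and it buys independence from the external $\F_q$ computations, at the price of having to prove the Hardy estimate yourself.

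One claim in your cap argument needs repair: $\varphi\sim h$ does \emph{not} hold uniformly on the enlarged cap $S_\partial$, since $S_\partial$ reaches the endpoint $\pm1$ where $\varphi$ vanishes; only $\varphi\lesssim h$ is true. The Hardy estimate therefore cannot simply pull $\varphi^{-k}$ out as $h^{-k}$. It still goes through if you expand $g^{(r)}$ around the endpoint of $S_\partial$ \emph{farthest} from $\pm1$, use $r/2+\a\ge0$ (resp.\ $r/2+\b\ge0$) to move the outer weight $w_{\a+r/2,\b+r/2}(y)$ inside the Taylor remainder integral, and bound the resulting kernel by
\[
(y-u)^{k-1}\varphi^{-k}(u)\le (1-u)^{k-1}(1-u)^{-k/2}\cdot c=c\,(1-u)^{k/2-1}
\]
near $x=1$, whose one-dimensional Schur bounds over the interval of length $\sim h^2$ are $\le c\,h^k$; this yields $\norm{\wab\varphi^r(g^{(r)}-T)}{\Lp(S_\partial)}\le c h^k\norm{\wab\varphi^{k+r}g^{(k+r)}}{p}$ for all $1\le p\le\infty$, including the borderline case $k=1$. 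With that repair your proof is complete.
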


\begin{proof} We follow the lines of the proof of \cite{kls}*{Lemma 4.1} and rely on the calculations there, modified to accommodate the additional weight $w_{\a,\b}$.

 We begin with the well known identity
\be \label{diffid}
\Delta_h^k(F,x) = \int_{-h/2}^{h/2}\cdots
\int_{-h/2}^{h/2}F^{(k)}(x+u_1+\cdots+u_k)du_1\cdots du_k
\ee
and write
\begin{align*}
\lefteqn{\wkr(g^{(r)},t)_{\a,\b,\,p} =   \sup_{0<h\leq t} \norm{\weight  \Delta_{h\varphi}^k(g^{(r)},\cdot)}{\Lp(\Dom_{kh})}    }\\
&=
\sup_{0<h\leq t} \norm{\weight
\int_{-h\varphi/2}^{h\varphi/2}\cdots\int_{-h\varphi/2}^{h\varphi/2}
g^{(k+r)}(\cdot+u_1+\cdots+u_k)du_1\cdots du_k}{\Lp(\Dom_{kh})} \\
&\le
\sup_{0<h\leq t} \norm{
\int_{-h\varphi/2}^{h\varphi/2}\cdots\int_{-h\varphi/2}^{h\varphi/2}
\bigl( \wab \varphi^r |g^{(k+r)}|\bigr) (\cdot+u_1+\cdots+u_k)du_1\cdots du_k}{\Lp(\Dom_{kh})} ,
\end{align*}
where, in the last inequality, we used the fact that $r/2+\a\geq 0$ and $r/2+\b\geq 0$  implies
\[
\weight(x)\le \wab(v)\varphi^r(v), \quad \text{if}\quad x-kh\varphi(x)/2\le v\le x+kh\varphi(x)/2.
\]
By H\"older's inequality (with $1/p + 1/q=1$), for each $x\in\Dom_{kh}$ and
$|u| \le (k-1)h\varphi(x)/2$,
 we have
\begin{align*}
& \int_{-h\varphi(x)/2}^{h\varphi(x)/2} \bigl(\wab \varphi^r |g^{(k+r)}|\bigr)
(x+u+u_k)du_k
=\int_{x+u-h\varphi(x)/2}^{x+u+h\varphi(x)/2}
\bigl(\wab \varphi^r |g^{(k+r)}|\bigr) (v)dv  \\
 & \quad \le\|w_{\a,\b}\varphi^{k+r}g^{(k+r)}\|_{L_p(\mathcal{A}(x,u))}
\norm{ \varphi^{-k}}{L_q(\mathcal{A}(x,u))}\\
& \quad \le
\G_p^{\a,\b}(x; g, k, r)
\norm{ \varphi^{-k}}{L_q(\mathcal{A}(x,u))} ,
\end{align*}
where
\[
\mathcal{A}(x,u):=\left[x+u-\frac h2\varphi(x), x+u+\frac h2\varphi(x)\right]
\]
and
\[
\G_p^{\a,\b}(x; g, k, r) := \norm{w_{\a,\b}\varphi^{k+r}g^{(k+r)}}{L_p[x-kh\varphi(x)/2, x+kh\varphi(x)/2]} .
\]

Thus, the proof is complete, once we show that
\be \label{toshowp}
I(k,p) \leq ch^{k} \norm{ \wab \varphi^{k+r}g^{(k+r)}}{p} ,
\ee
where
\[
  I(k,p)
 :=    \norm{ \G_p^{\a,\b}(\cdot; g, k, r) \F_q(\cdot,k)
   }{\Lp(\Dom_{kh})} ,
\]
\begin{align*}
 \F_q(x,k) & :=    \int_{-h\varphi(x)/2}^{h\varphi(x)/2}
\cdots\int_{-h\varphi(x)/2}^{h\varphi(x)/2}
\norm{ \varphi^{-k}}{\Lq(\mathcal{A}(x,u_1+\cdots+u_{k-1}))}
du_1\cdots du_{k-1}, \quad \text{if}\,\, k\ge 2,
\end{align*}
and $\F_q(x,1) := \norm{ \varphi^{-1}}{L_q(\mathcal{A}(x,0))}$.

To this end, we write
\begin{align*}
\norm{\cdot}{\Lp(\Dom_{kh})} & \le \norm{\cdot}{\Lp(\Dom_{2kh})} + \norm{\cdot}{\Lp\left((\Dom_{kh}\setminus\Dom_{2kh})\cap[0,1]\right)}
+ \norm{\cdot}{\Lp\left((\Dom_{kh}\setminus \Dom_{2kh})\cap[-1,0]\right)} \\
&   =:I_1(p)+I_2(p)+I_3(p) .
\end{align*}
In order to estimate $I_1(p)$, using \ineq{varphixu},
 for $x\in\Dom_{2kh}$, we have
\[
 \F_q (x,k)
   \le
2^k  (h\varphi(x))^{k-1}    \varphi^{-k}(x) (h\varphi(x))^{1/q} \\
 =
2^k h^{k-1/p}\varphi^{-1/p}(x).
\]
Exactly the same sequence of inequalities as in \cite{kls}*{p. 141-142} with $\varphi^{k+r} g^{(k+r)}$ there replaced by
$\wab \varphi^{k+r}g^{(k+r)}$ yields the estimate
\[
I_1(p) \leq c h^{k} \norm{\wab \varphi^{k+r}g^{(k+r)}}{p}.
\]
We now estimate $I_2(p)$, the estimate of $I_3(p)$ being  analogous.
Denoting
\[
\E_{kh} := (\Dom_{kh}\setminus \Dom_{2kh})\cap[0,1]
\]
we note that, since $\G_p^{\a,\b}(x; g, k, r) \le \norm{\wab \varphi^{k+r}g^{(k+r)}}{p}$, $x\in \Dom_{kh}$, we are done if we show that
\be\label{kg}
\norm{ \F_q (\cdot, k)}{\Lp(\E_{kh})} \le c h^k.
\ee
It remains to observe that the estimates
\[
\int_{\E_{kh}}\left(\F_q(x, k)\right)^pdx
\le ch^{kp}
\andd
\sup_{x\in\E_{kh}}\F_1(x, k)\le ch^k
\]
which are, respectively, inequalities (4.19) and (4.10) from \cite{kls}, imply the validity of \ineq{kg}.
This completes the proof.
\end{proof}

\begin{lemma}\label{upper} Let $k\in\N$, $r\in\N_0$, $r/2+\a\geq 0$, $r/2+\b\geq 0$ and $1\le p\le\infty$. If $f\in\B^r_p(w_{\a,\b})$, then
\be\label{up}
\wkr(f^{(r)},t)_{\a,\b,\,p}\le c K^\varphi_{k,r}(f^{(r)},t^k)_{\a,\b,\,p}\,,\quad t>0.
\ee
\end{lemma}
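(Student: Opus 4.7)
The plan is to use the standard $K$-functional splitting argument. For any $g\in\B^{k+r}_p(w_{\alpha,\beta})$, write
\[
\Delta_{h\varphi(x)}^k(f^{(r)},x) = \Delta_{h\varphi(x)}^k(f^{(r)}-g^{(r)},x) + \Delta_{h\varphi(x)}^k(g^{(r)},x),
\]
multiply both sides by $\wt_{kh}^{r/2+\a,r/2+\b}(x)$, and take $\Lp$-norms over $\Dom_{kh}$. Since $1\le p\le\infty$, Minkowski's inequality applies to yield, after taking the supremum over $0<h\le t$,
\[
\wkr(f^{(r)},t)_{\a,\b,p} \le \wkr(f^{(r)}-g^{(r)},t)_{\a,\b,p} + \wkr(g^{(r)},t)_{\a,\b,p}.
\]

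Next I would estimate each term separately. For the first, since $f-g\in \B^r_p(\wab)$ (as both $f$ and $g$ lie in $\B^r_p(\wab)$ by virtue of \lem{hierarchy}), \lem{uppernorm} gives
\[
\wkr(f^{(r)}-g^{(r)},t)_{\a,\b,p} \le c\, \norm{\wab \varphi^r (f^{(r)}-g^{(r)})}{p}.
\]
For the second term, \lem{lem:estimate} (which is the content of \ineq{estimate} in the preceding lemma) yields
\[
\wkr(g^{(r)},t)_{\a,\b,p} \le c\, t^k \norm{\wab \varphi^{k+r} g^{(k+r)}}{p}.
\]
Combining these two estimates, we obtain
\[
\wkr(f^{(r)},t)_{\a,\b,p} \le c\left\{ \norm{\wab \varphi^r (f^{(r)}-g^{(r)})}{p} + t^k \norm{\wab \varphi^{k+r} g^{(k+r)}}{p} \right\}.
\]
Taking the infimum over all $g\in\B^{k+r}_p(w_{\alpha,\beta})$ on the right-hand side gives exactly \ineq{up}.

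The two main ingredients are already in place: the boundedness of the modulus by the weighted $\Lp$-norm of $\wab\varphi^r f^{(r)}$ (\lem{uppernorm}) and the Marchaud-type estimate of the modulus by $t^k$ times the weighted norm of $\wab\varphi^{k+r}g^{(k+r)}$ (\lem{lem:estimate}). Since both of these have been established, the only step requiring care is verifying that the Minkowski-type splitting is legitimate, which uses only the assumption $p\ge 1$. No genuine obstacle is expected; the proof is essentially a one-line $K$-functional argument once the two preparatory lemmas are available.
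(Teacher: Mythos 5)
Your proposal is correct and follows essentially the same route as the paper: split off an arbitrary $g\in\B^{k+r}_p(\wab)$ by the triangle inequality (valid since $p\ge1$), bound the two terms by \lem{uppernorm} and \lem{lem:estimate} respectively (using \lem{hierarchy} to place $g$ in $\B^r_p(\wab)$), and take the infimum over $g$. No issues.
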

\begin{proof}
Take any $g\in\B^{r+k}_p(\wab)$.
Then, by \lem{hierarchy},
$g\in\B^r_p(\wab)$, and using Lemmas~\ref{uppernorm} and \ref{lem:estimate} we have
\begin{align*}
\wkr(f^{(r)},t)_{\a,\b,\,p} & \le\wkr(f^{(r)}-g^{(r)},t)_{\a,\b,\,p}+\wkr(g^{(r)},t)_{\a,\b,\,p} \\
&\le
  c \norm{ \wab \varphi^r \left( f^{(r)}-g^{(r)} \right) }{p}+  c t^k \norm{ \wab \varphi^{k+r}g^{(k+r)}}{p},
\end{align*}
 which immediately yields \ineq{up}.
\end{proof}

\sect{Equivalence of the moduli and Realization functionals \& proof of the lower estimate in \thm{thm1.4}}\label{seclower}




In this section, using some general results for special classes of doubling and $A^*$ weights, we prove that, for all $0<p\le \infty$, the $\wkr$ moduli are equivalent to certain Realization functionals.
This, in turn, provides lower estimates of $\wkr$ by means of the appropriate $K$-functionals, thus proving the lower estimate in \thm{thm1.4}. This, of course, is meaningful only for $1\le p\le\infty$, as we recall that, for $0<p<1$, the $K$-functionals may vanish while the moduli do not.

For general definitions of doubling weights, $A^*$ weights, $\W(\Z)$ and $\W^*(\Z)$ see \cites{k-singular, k-infty}. We only mentioned that the Jacobi weights with nonnegative exponents belong to all of these classes (see \cite{k-infty}*{Remark 3.3} and \cite{k-singular}*{Example 2.7}).
We now restate some  definitions from \cites{k-infty, k-singular}, adapting them to the weights $\wab$ with $\a,\b\geq 0$, and state corresponding theorems for these weights only.

Let $\Z_{A,h}^1 := [-1,-1+Ah^2]$, $\Z_{A,h}^2 := [1-Ah^2,1]$ and $\I_{A, h}:= [-1+Ah^2, 1-Ah^2]$.

The main part weighted modulus of smoothness and the averaged main part weighted modulus   are defined, respectively,  as
 \[
 \Omega_\varphi^k(f, A, t)_{p,w}  :=   \sup_{0<h\leq t} \norm{w(\cdot)  \Delta_{h\varphi(\cdot)}^k(f, \cdot; \I_{A, h})}{\Lp(\I_{A, h})}
\]
and
\[
\widetilde{\Omega}_\varphi^k(f, A, t)_{p,w}  :=  \left( \frac{1}{t} \int_0^t \norm{w(\cdot)  \Delta_{h\varphi(\cdot)}^k(f, \cdot; \I_{A, h})}{\Lp(\I_{A, h})}^p dh \right)^{1/p} .
\]

The (complete)  weighted modulus of smoothness  and the (complete) averaged weighted modulus are defined as
\[
 \w_\varphi^k(f, A,   t)_{p,w}  :=  \Omega_\varphi^k(f, A, t)_{p,w}  + \sum_{j=1}^2  E_k(f, \Z_{2A,t}^j)_{w,p}  .
\]
and
\[
\widetilde{\w}_\varphi^k(f, A, t)_{p,w}  :=  \widetilde{\Omega}_\varphi^k(f, A, t)_{p,w}  + \sum_{j=1}^2  E_k(f, \Z_{2A,t}^j)_{w,p},
\]
respectively.

 The following is an immediate corollary of \cite{k-singular}*{Theorem 5.2} in the case $0<p<\infty$ and \cite{k-infty}*{Theorem 6.1} if $p=\infty$.

\begin{theorem}   \label{jacksonthm}
Let
  $k, \nu_0\in\N$, $\nu_0\geq k$, $0<p \le \infty$, $\a\geq 0$, $\b\geq 0$, $A>0$  and $f\in\Lpab$.
 Then, there exists $N\in\N$ depending on $k$, $\nu_0$, $p$, $\a$ and $\b$, such that
 for every $n \geq N$ and $\vartheta>0$,  there is a polynomial $P_n \in\Poly_n$ satisfying
\[
\norm{\wab(f-P_n)}{p}   \leq c   \widetilde{\w}_\varphi^k(f, A, \vartheta/n)_{p,\wab}  \leq c   \w_\varphi^k(f, A, \vartheta/n)_{p,\wab}
\]
and
\[
n^{-\nu} \norm{\wab \varphi^\nu P_n^{(\nu)}}{p}  \leq c   \widetilde{\w}_\varphi^k(f, A, \vartheta/n)_{p,\wab} \leq c   \w_\varphi^k(f, A, \vartheta/n)_{p,\wab}   ,      \quad k\leq \nu \leq \nu_0,
\]
where constants $c$  depend only on  $k$, $\nu_0$, $p$,   $A$, $\a$, $\b$ and $\vartheta$.
\end{theorem}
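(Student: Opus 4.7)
The proof will be essentially an application of the Jackson-type machinery already developed in \cites{k-singular, k-infty} for very general doubling and $A^*$ weight classes, so the plan reduces to verifying that $\wab$ with $\a,\b\ge 0$ falls into those classes and then invoking the cited results.

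\textbf{Step 1.} Recall, as quoted just above the statement, that Jacobi weights with nonnegative exponents belong to $\W(\Z)$ (the class relevant to \cite{k-singular}*{Theorem 5.2}, used for $0<p<\infty$) by \cite{k-singular}*{Example 2.7}, and to $\W^*(\Z)$ (the class relevant to \cite{k-infty}*{Theorem 6.1}, used for $p=\infty$) by \cite{k-infty}*{Remark 3.3}. Hence $w:=\wab$ is an admissible weight in both cited theorems.

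\textbf{Step 2.} Apply the corresponding cited theorem with this weight, $k$, $\nu_0$, $A$, $\vartheta$, and the given $f$. For $0<p<\infty$, \cite{k-singular}*{Theorem 5.2} produces, for every $n\ge N$, a polynomial $P_n\in\Poly_n$ such that
\[
\norm{\wab(f-P_n)}{p}\le c\,\widetilde{\w}_\varphi^k(f,A,\vartheta/n)_{p,\wab}
\]
and
\[
n^{-\nu}\norm{\wab\varphi^\nu P_n^{(\nu)}}{p}\le c\,\widetilde{\w}_\varphi^k(f,A,\vartheta/n)_{p,\wab},\quad k\le\nu\le\nu_0,
\]
with constants depending only on the listed parameters. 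For $p=\infty$ the same polynomial (with $\widetilde{\w}$ replaced by $\w$, since they agree in this case by definition) is delivered by \cite{k-infty}*{Theorem 6.1}. This yields the first inequality in each of the two chains in \thm{jacksonthm}.

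\textbf{Step 3.} The remaining inequality $\widetilde{\w}_\varphi^k(f,A,t)_{p,\wab}\le \w_\varphi^k(f,A,t)_{p,\wab}$ is immediate from the definitions: for $0<p<\infty$,
\[
\widetilde{\Omega}_\varphi^k(f,A,t)_{p,\wab}^p=\frac{1}{t}\int_0^t\norm{\wab\Delta_{h\varphi}^k(f,\cdot;\I_{A,h})}{\Lp(\I_{A,h})}^p dh\le \Omega_\varphi^k(f,A,t)_{p,\wab}^p,
\]
while the best approximation terms $E_k(f,\Z_{2A,t}^j)_{\wab,p}$ are identical on both sides; for $p=\infty$ the two quantities coincide by convention.

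I do not expect a real obstacle: the content of the theorem is precisely the specialization of the general results in \cites{k-singular, k-infty} to Jacobi weights with nonnegative exponents. The only care required is to confirm that the range $k\le\nu\le\nu_0$ (with $\nu_0$ allowed to be any integer $\ge k$) and the dependence of constants on $A$ and $\vartheta$ match the quantifier structure in the cited theorems, which is routine.
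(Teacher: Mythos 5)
Your proposal is correct and follows exactly the paper's route: the paper itself presents this theorem as an immediate corollary of \cite{k-singular}*{Theorem 5.2} (for $0<p<\infty$) and \cite{k-infty}*{Theorem 6.1} (for $p=\infty$), relying on the fact that Jacobi weights with nonnegative exponents lie in the required weight classes, with the inequality $\widetilde{\w}_\varphi^k\le\w_\varphi^k$ being trivial from the definitions just as you note.
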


The following theorem is proved in \cite{stok}.

\begin{theorem} \label{thm:localwh}
Let $k\in\N$, $\a\geq 0$, $\b\geq 0$, $A>0$, $0< p\leq \infty$ and $f\in \Lpab$. Then, for any $0<t\leq \sqrt{2/A}$, we have
\be \label{localwh}
E_k(f, \Z_{A,t})_{\wab, p}  \le c \w_{k,0}^{*\varphi}(f,t)_{\a,\b,p}  \le c \w_{k,0}^\varphi(f,t)_{\a,\b,p} ,
\ee
where the interval $\Z_{A,t}$ is either $[1-At^2, 1]$ or $[-1,-1+At^2]$, and $c$ depends only on $k$, $p$, $\a$, $\b$ and $A$.

In particular, if $A=2$ and $t=1$, then
\be \label{ineq:regwh}
E_k(f)_{\wab,p} \le c \w_{k,0}^{*\varphi}(f, 1)_{\a,\b,p} \le c \w_{k,0}^\varphi(f, 1)_{\a,\b,p} .
\ee
\end{theorem}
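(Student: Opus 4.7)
The second inequality is immediate from \eqref{ineq}; the substance is in the first. By the $x\mapsto -x$ symmetry we may assume $\Z := \Z_{A,t} = [1-At^2, 1]$. The relevant geometry on $\Z$ is $\varphi(x) \sim t$ and $\wab(x) \sim (1-x)^\a$, so a step $h\varphi(x)$ with $h\sim t$ is of order $t^2\sim |\Z|$: the modulus scale and the interval length agree.

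My plan is to construct an explicit $P\in\Poly_k$ via a two-parameter Steklov/Boman--Shapiro averaging. Fix an auxiliary interval $J := [1-3At^2,\,1-\tfrac{1}{2}At^2]$, which lies safely inside $\Dom_{kh_*}$ for all $h_*$ in some range $[h_1,h_2]\sim t^2$, and define $P$ so that, in a suitable kernel form,
\[
f(x) - P(x) \;=\; \frac{1}{h_2-h_1}\int_{h_1}^{h_2}\!\int \Delta^k_{h}(f,\xi)\,K(x,\xi,h)\,d\xi\,dh, \qquad x\in\Z,
\]
with the kernel chosen (e.g.\ as a $B$-spline-type object produced by Lagrange interpolation at $k$ nodes in $J$) so that $P\in\Poly_k$ and $K(x,\xi,h)$ is uniformly bounded on the relevant support. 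The base points $\xi$ range over a subset of $[1-3At^2,1]$, and $h\sim t^2$.

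The change of variables $\tau := h/\varphi(\xi)$ converts the inner regular $k$-th differences into $\varphi$-differences $\Delta^k_{\tau\varphi(\xi)}(f,\xi)$, with $\tau \sim t$ and $d\tau \sim dh/t$, so that the averaging $\frac{1}{h_2-h_1}\int\cdots dh$ becomes, up to constants, $\frac{1}{t}\int_0^{ct}\cdots d\tau$ -- exactly the structure of the averaged modulus. Proposition~\ref{propxu} combined with $\a,\b\ge 0$ gives $\wab(x)\sim\wab(\xi)\sim\wt_{k\tau}^{\a,\b}(\xi)$ uniformly on the relevant portion of $\Dom_{k\tau}$; applying Fubini with Minkowski (for $p\ge 1$) or the $p$-triangle inequality (for $0<p<1$) then yields
\[
\|\wab(f-P)\|_{\Lp(\Z)} \;\le\; c\,\w_{k,0}^{*\varphi}(f,t)_{\a,\b,p}.
\]
The main obstacle is ensuring the comparability $\wt_{k\tau}^{\a,\b}(\xi)\sim\wab(x)$ uniformly for $x$ arbitrarily close to $1$: the shift $k\tau\varphi(\xi)/2$ appearing in $\wt_{k\tau}^{\a,\b}$ is of order $t^2$, the same order as $1-\xi$ near the right end of $J$, so $J$ must be placed safely inside $\Dom_{kh_2}$ to make Proposition~\ref{propxu} apply with constants independent of $t$. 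A secondary bookkeeping point is that for $0<p<1$ duality is unavailable and every inequality must be carried out at the level of $p$-th powers; this tracking is routine but needs care.
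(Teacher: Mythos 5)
First, a point of reference: the paper does not prove this theorem at all --- it is imported from the companion preprint \cite{stok} ("The following theorem is proved in \cite{stok}"), so there is no in-paper argument to compare against. Your reduction (the second inequality is \ineq{ineq}; the substance is a local weighted Whitney inequality near an endpoint) and your general strategy (Steklov-type averaged representation of the Whitney remainder, substitution $\tau=h/\varphi(\xi)$ to convert ordinary differences into $\varphi$-differences with $\tau\sim t$, then a weight comparison) are the natural shape of such a proof. But the proposal has a genuine gap precisely at the crux, and the one concrete tool you name for closing it does not work.

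Proposition~\ref{propxu} cannot supply the comparison $\wt_{k\tau}^{\a,\b}(\xi)\sim\wab(x)$ for the differences that matter: it requires the base point to lie in $\Dom_{2\delta}$, whereas here $1-\xi\sim t^2$ and $k\tau\sim t$, so $\xi$ sits at the edge of (or outside) $\Dom_{2k\tau}$ and $\wt_{k\tau}^{\a,\b}(\xi)=(1-\xi-k\tau\varphi(\xi)/2)^{\a}(\cdots)$ can be arbitrarily smaller than $(1-\xi)^{\a}\sim t^{2\a}$. Your remedy --- confining base points to $J=[1-3At^2,\,1-At^2/2]$ so that Proposition~\ref{propxu} applies --- is self-defeating: if every difference has all its nodes at distance $\gtrsim t^2$ from $1$, then $f$ is never sampled on $(1-ct^2,1]$ and the identity $f(x)-P(x)=\int\!\!\int\Delta^k_h(f,\xi)K\,d\xi\,dh$ cannot hold there for general $f\in\Lpab$; if instead the differences do reach up to $1$, their rightmost node $\xi+k\tau\varphi(\xi)/2$ approaches $1$, $\wt_{k\tau}^{\a,\b}(\xi)\to0$, and the needed bound $\wab(x)\le c\,\wt_{k\tau}^{\a,\b}(\xi)$ fails for $x$ with $1-x\sim t^2$. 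The mechanism that actually rescues the estimate is structural and is never invoked in your sketch: when the difference contributing at $x$ is the backward difference whose rightmost node is $x$ itself (i.e.\ $\xi=x-k\tau\varphi(\xi)/2$), one has $\wt_{k\tau}^{\a,\b}(\xi)=(1-x)^{\a}(1+x-k\tau\varphi(\xi))^{\b}\sim\wab(x)$ exactly --- the built-in vanishing of $\wt$ at the extreme node is designed to cancel the vanishing of $\wab$ at $x$. A complete proof must construct $K$ so that the differences seen at $x$ have their extreme node at (essentially) $x$; this support property is the entire content of the weighted endpoint Whitney inequality and is not delivered by citing "a $B$-spline-type object." Separately, for $0<p<1$ the continuous average $\frac1{h_2-h_1}\int(\cdots)\,dh$ cannot be pulled inside the quasinorm by a $p$-triangle inequality (concavity of $u\mapsto u^p$ runs the wrong way for integral averages), so one must discretize the averaging or select a single good $h$ by a measure argument; this is a real obstruction, not routine bookkeeping.
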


We now show that the moduli ${\w}_\varphi^k(f, A, t)_{p,\wab}$ and $\widetilde{\w}_\varphi^k(f, A, t)_{p,\wab}$ may be estimated from above by the moduli
 $\w_{k,0}^{\varphi}(f,t)_{\a,\b,p}$ and $\w_{k,0}^{*\varphi}(f,t)_{\a,\b,p}$, respectively.

\begin{lemma} \label{lemma43}
Let $k \in\N$,  $\a\geq 0$, $\b\geq 0$, $A \geq 2k^2$  and $f\in\Lpab$, $0<p \le \infty$.
  Then, for $0<t\leq 1/\sqrt{A}$,
  \[
   {\w}_\varphi^k(f, A, t)_{p,\wab}  \leq c \w_{k,0}^{\varphi}(f,t)_{\a,\b,p}
  \]
  and
  \[
  \widetilde{\w}_\varphi^k(f, A, t)_{p,\wab}  \leq c \w_{k,0}^{*\varphi}(f,t)_{\a,\b,p} ,
  \]
  where constants $c$ depend  only on $k$, $p$, $\a$, $\b$ and $A$.
\end{lemma}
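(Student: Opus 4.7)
The strategy is to handle separately the two constituents of $\w_\varphi^k(f, A, t)_{p,\wab}$: the main-part modulus $\Omega_\varphi^k$ (resp.~its averaged version $\widetilde{\Omega}_\varphi^k$), and the two endpoint best-approximation errors $E_k(f, \Z_{2A,t}^j)_{\wab,p}$, $j=1,2$. The main-part terms will be compared directly with our moduli on $\Dom_{kh}$, while the endpoint errors will be absorbed via Theorem~\ref{thm:localwh}.

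First I would compare the domains. Since $\mu(kh) = 2k^2h^2/(4+k^2h^2)\le k^2h^2/2$, the hypothesis $A\ge 2k^2$ gives $Ah^2\ge 2k^2h^2\ge 4\mu(kh)$, so $\I_{A,h}\subset\Dom_{kh}$. For $x\in\I_{A,h}$, the symmetric $k$-point stencil $[x-kh\varphi(x)/2, x+kh\varphi(x)/2]$ is contained in $[-1,1]$, and hence $\Delta_{h\varphi(x)}^k(f,x;\I_{A,h})$ either coincides with $\Delta_{h\varphi(x)}^k(f,x)$ (when the stencil lies in $\I_{A,h}$) or vanishes; in any event
\[
|\Delta_{h\varphi(x)}^k(f,x;\I_{A,h})|\le |\Delta_{h\varphi(x)}^k(f,x)|, \qquad x\in\I_{A,h}.
\]

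Next I would compare the weights $\wab$ and $\wt_{kh}^{\a,\b}$ on $\I_{A,h}$. For $x\in\I_{A,h}$ we have $1\pm x\ge Ah^2\ge 2k^2h^2$, hence
\[
\frac{kh\varphi(x)}{2(1-x)}\le \frac{kh\sqrt{2(1-x)}}{2(1-x)}=\frac{kh}{\sqrt{2(1-x)}}\le \frac{kh}{\sqrt{4k^2h^2}}=\frac12,
\]
so $(1-x)/2\le 1-x-kh\varphi(x)/2\le 1-x$, and symmetrically for $1+x$. Since $\a,\b\ge 0$, this gives $\wab(x)\sim \wt_{kh}^{\a,\b}(x)$ for $x\in\I_{A,h}$. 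Combining with the previous step,
\[
\|\wab(\cdot)\Delta_{h\varphi(\cdot)}^k(f,\cdot;\I_{A,h})\|_{\Lp(\I_{A,h})}\le c\|\wt_{kh}^{\a,\b}(\cdot)\Delta_{h\varphi(\cdot)}^k(f,\cdot)\|_{\Lp(\Dom_{kh})}.
\]
Taking $\sup_{0<h\le t}$ (resp., $L_p$-averaging $(1/t)\int_0^t\,dh$) on both sides yields
\[
\Omega_\varphi^k(f,A,t)_{p,\wab}\le c\,\w_{k,0}^\varphi(f,t)_{\a,\b,p}, \qquad \widetilde{\Omega}_\varphi^k(f,A,t)_{p,\wab}\le c\,\w_{k,0}^{*\varphi}(f,t)_{\a,\b,p}.
\]

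For the endpoint errors, I would apply \thm{thm:localwh} with $2A$ in place of $A$: the condition $0<t\le 1/\sqrt A=\sqrt{2/(2A)}$ is exactly our hypothesis, and so
\[
E_k(f,\Z_{2A,t}^j)_{\wab,p}\le c\,\w_{k,0}^{*\varphi}(f,t)_{\a,\b,p}\le c\,\w_{k,0}^\varphi(f,t)_{\a,\b,p}, \quad j=1,2.
\]
Adding these to the main-part estimates (and noting that for $p=\infty$ the averaged modulus is by definition equal to the non-averaged one) gives both claimed inequalities. The only nontrivial step is the weight comparison in Step~2, where the restriction $A\ge 2k^2$ is exactly what is needed to keep $1\pm x-kh\varphi(x)/2$ comparable to $1\pm x$; everything else is bookkeeping.
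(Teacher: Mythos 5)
Your proposal is correct and follows essentially the same route as the paper: compare $\I_{A,h}$ with $\Dom_{kh}$ (via $A\ge 2k^2$), use the equivalence $\wab(x)\sim\wt_{kh}^{\a,\b}(x)$ there to bound the main-part (and averaged main-part) terms, and absorb the endpoint errors $E_k(f,\Z_{2A,t}^j)_{\wab,p}$ via Theorem~\ref{thm:localwh}. The only cosmetic difference is that you verify the weight comparability on $\I_{A,h}$ by hand, whereas the paper notes $\I_{A,h}\subseteq\Dom_{2kh}$ and cites Proposition~\ref{propxu}.
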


\begin{proof} Recall that $\I_{A, h} = [-1+Ah^2, 1-Ah^2]$ and note that, if $A \geq 2k^2$, then
\[
\I_{A, h} \subseteq \Dom_{2kh} \subset \Dom_{kh}  , \quad \text{for all $h>0$.}
\]
Since, by Proposition~\ref{propxu},
$\wab(x) \sim\wt_{kh}^{\a,\b}(x)$, $x\in\Dom_{2kh}$, we have
\[
\norm{\wab(\cdot)  \Delta_{h\varphi(\cdot)}^k(f, \cdot; \I_{A, h})}{\Lp(\I_{A, h})}
\leq c \norm{\wt_{kh}^{\a,\b}(\cdot)  \Delta_{h\varphi(\cdot)}^k(f, \cdot)}{\Lp( \Dom_{kh})},
\]
so that
\[
{\Omega}_\varphi^k(f, A, t)_{p,\wab} \leq c \w_{k,0}^{\varphi}(f,t)_{\a,\b,p}
\]
and
\[
\widetilde{\Omega}_\varphi^k(f, A, t)_{p,\wab} \leq c  \w_{k,0}^{*\varphi}(f,t)_{\a,\b,p} .
\]
Now, \thm{thm:localwh}   yields that, for $0<t\leq 1/\sqrt{A}$,
\begin{align*}
\lefteqn{ \max\left\{ E_k(f, [1-2At^2, 1])_{\wab, p},  E_k(f, [-1,-1+ 2At^2])_{\wab, p}  \right\} }\\
& \hspace{2cm} \le    c \w_{k,0}^{*\varphi}(f,t)_{\a,\b,p}  \le c \w_{k,0}^\varphi(f,t)_{\a,\b,p} ,
\end{align*}
and the proof is complete.
\end{proof}

The following is an immediate corollary of \thm{jacksonthm} and \lem{lemma43}.

\begin{corollary}   \label{jacksoncor}
Let $k \in\N$, $r\in\N_0$,    $r/2+\a\geq 0$, $r/2+\b\geq 0$  and $f\in\B_p^r(\wab)$, $0< p \leq \infty$.
 Then, there exists $N\in\N$ depending on $k$, $r$, $p$, $\a$ and $\b$, such that
 for every $n \geq N$ and $0<\vartheta \leq 1$,  there is a polynomial $P_n \in\Poly_n$ satisfying
\[
\norm{\wab \varphi^r (f^{(r)}-P_n^{(r)})}{p}   \leq c   \wkrav(f^{(r)}, \vartheta/n)_{\a,\b,p}  \leq c   \wkr(f^{(r)},   \vartheta/n)_{\a,\b,p},
\]
and
\[
n^{-k} \norm{\wab \varphi^{k+r} P_n^{(k+r)}}{p}  \leq c   \wkrav(f^{(r)}, \vartheta/n)_{\a,\b,p}  \leq c   \wkr(f^{(r)},  \vartheta/n)_{\a,\b,p}  ,
\]
where constants $c$  depend only on  $k$, $r$, $p$, $\a$, $\b$ and $\vartheta$.
\end{corollary}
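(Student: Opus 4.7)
The plan is to deduce the corollary by applying Theorem~\ref{jacksonthm} to the function $g := f^{(r)}$ with the \emph{shifted} Jacobi exponents $(\a,\b) \mapsto (r/2+\a,\,r/2+\b)$, both of which are $\geq 0$ by hypothesis, and then to convert the resulting Jackson estimates into the desired form via Lemma~\ref{lemma43}. The key bookkeeping identity is
\[
\wab(x)\,\varphi^r(x) = w_{r/2+\a,\,r/2+\b}(x),
\]
together with the fact, immediate from the definitions, that
\[
\w_{k,0}^{\varphi}(f^{(r)},t)_{r/2+\a,\,r/2+\b,\,p} = \wkr(f^{(r)},t)_{\a,\b,p}
\]
(and analogously for the averaged versions), since in both cases the weight appearing under the $k$th difference is $\wt_{kh}^{r/2+\a,\,r/2+\b}$.

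Concretely, I would set $m := n - r$ and apply Theorem~\ref{jacksonthm} to $g := f^{(r)} \in L_{p,\,w_{r/2+\a,\,r/2+\b}}$ with parameters $\nu_0 = k$, $A = 2k^2$, and some $\vartheta' > 0$ to be chosen, requiring $m \geq N_0$, where $N_0$ is the $N$ furnished by Theorem~\ref{jacksonthm}. This yields a polynomial $Q_m \in \Poly_m$ satisfying both
\[
\norm{w_{r/2+\a,\,r/2+\b}(f^{(r)}-Q_m)}{p} \leq c\,\widetilde{\w}_\varphi^k(f^{(r)},A,\vartheta'/m)_{p,\,w_{r/2+\a,\,r/2+\b}}
\]
and the analogous bound for $m^{-k}\norm{w_{r/2+\a,\,r/2+\b}\,\varphi^k Q_m^{(k)}}{p}$. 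Then define $P_n$ to be any $r$-fold antiderivative of $Q_m$; this places $P_n$ in $\Poly_n$ with $P_n^{(r)} = Q_m$ and $P_n^{(k+r)} = Q_m^{(k)}$. Using the identity $\wab\varphi^r = w_{r/2+\a,\,r/2+\b}$ and the trivial $n^{-k} \leq m^{-k}$, the two displayed Jackson bounds immediately give the estimates of the corollary with the upper bound $c\,\widetilde{\w}_\varphi^k(f^{(r)},A,\vartheta'/m)_{p,\,w_{r/2+\a,\,r/2+\b}}$.

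To replace $\vartheta'/m$ by $\vartheta/n$, I would simply take $\vartheta' := \vartheta/2$: then, provided $n \geq 2r$, one has $\vartheta'/m = \vartheta/(2(n-r)) \leq \vartheta/n$, and monotonicity in $t$ of the main-part averaged modulus $\widetilde{\w}_\varphi^k(\cdot,A,t)_{p,w}$ (visible from its definition, with the $1/t$ factor absorbed into a standard running-average argument) gives the desired replacement. Finally, Lemma~\ref{lemma43}, applied with the shifted exponents $(r/2+\a,\,r/2+\b)$ in place of $(\a,\b)$, converts
\[
\widetilde{\w}_\varphi^k(f^{(r)},A,\vartheta/n)_{p,\,w_{r/2+\a,\,r/2+\b}} \leq c\,\w_{k,0}^{*\varphi}(f^{(r)},\vartheta/n)_{r/2+\a,\,r/2+\b,\,p} = c\,\wkrav(f^{(r)},\vartheta/n)_{\a,\b,p},
\]
and this step requires only $\vartheta/n \leq 1/\sqrt{A} = 1/(k\sqrt{2})$, which is ensured by setting $N := \max\{N_0 + r,\,2r,\,\lceil k\sqrt{2}\rceil\}$ and using $\vartheta \leq 1$. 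Chaining with \eqref{ineq}, namely $\wkrav \leq \wkr$, delivers both displayed inequalities of the corollary.

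There is no substantial obstacle: the argument is a clean composition of two already-proved results. The only points demanding mild care are (i) verifying that the polynomial produced for the derivative can be lifted to a polynomial of degree $<n$, which is automatic by antidifferentiation; (ii) matching the parameters in the target modulus, handled by the choice $\vartheta' = \vartheta/2$ together with monotonicity; and (iii) ensuring the Lemma~\ref{lemma43} constraint $t \leq 1/\sqrt{A}$, absorbed into the definition of $N$.
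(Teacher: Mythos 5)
Your argument is correct and is essentially the paper's own route: the paper presents this corollary as an immediate consequence of Theorem~\ref{jacksonthm} applied to $f^{(r)}$ with the shifted exponents $(r/2+\a,\,r/2+\b)$, followed by Lemma~\ref{lemma43}, which is precisely the composition you carry out. The bookkeeping you supply (antidifferentiation, the degree shift $n\mapsto n-r$, and the bounded loss from re-normalizing the averaged modulus when passing from $\vartheta'/m$ to $\vartheta/n$) is exactly what is needed to make ``immediate'' rigorous.
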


Suppose now that $0<t\leq 2/k$, and $n\in\N$ is such that $n\geq N$ and $ c_1/t   \leq n \leq c_2/t$.  Then, denoting  $\mu := \max\{1, c_2\}$, \cor{jacksoncor} with $\vartheta = \min\{1, c_1\}$ implies that
 \begin{align} \label{mainlower}
K_{k,r}^\varphi(f^{(r)},t^k)_{\a,\b,p} & \leq   \mu^k K_{k,r}^\varphi(f^{(r)},(t/\mu)^k)_{\a,\b,p}
\leq \mu^k R_{k,r}^\varphi(f^{(r)},n^{-k})_{\a,\b,p} \\ \nonumber
&\leq   c \wkrav(f^{(r)}, \vartheta/n)_{\a,\b,p}
 \leq   c  \wkrav(f^{(r)}, t)_{\a,\b,p} \\ \nonumber
 &\leq   c  \wkr (f^{(r)}, t)_{\a,\b,p}.
\end{align}

Note that \ineq{mainlower} is valid for all $0<p\le \infty$. However, we remind the reader that, for $0<p<1$, the $K$-functional may become identically equal to zero.

Together with \lem{upper}, the sequence of estimates \ineq{mainlower} immediately yields \thm{thm1.4}.

We now show that the estimates in \lem{lemma43} may be reversed in some sense, \ie there exists $0<\theta\leq 1$ such that moduli $\w_{k,0}^{\varphi}(f,\theta t)_{\a,\b,p}$ and $\w_{k,0}^{*\varphi}(f,\theta t)_{\a,\b,p}$ may be estimated from above, respectively,  by   ${\w}_\varphi^k(f, A, t)_{p,\wab}$ and $\widetilde{\w}_\varphi^k(f, A, t)_{p,\wab}$.

\begin{lemma} \label{ab}
Let $k \in\N$,  $\a\geq 0$, $\b\geq 0$, $A>0$  and $f\in\Lpab$, $0<p \le \infty$.
  Then, there exists $0<\theta \le 1$ depending only on $k$  and $A$, such that  for all $0<t\leq \sqrt{1/A}$,
  \be \label{i1}
 \w_{k,0}^{\varphi}(f,\theta t)_{\a,\b,p} \le c  {\w}_\varphi^k(f, A, t)_{p,\wab}
  \ee
  and
  \be \label{i2}
\w_{k,0}^{*\varphi}(f,\theta t)_{\a,\b,p} \leq c  \widetilde{\w}_\varphi^k(f, A, t)_{p,\wab}  ,
  \ee
  where constants $c$ depend  only on $k$, $p$, $\a$, $\b$ and $A$.
\end{lemma}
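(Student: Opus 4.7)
\smallskip
\noindent\textbf{Proof plan for \lem{ab}.}

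The plan is to reverse the inequalities of \lem{lemma43} by decomposing $\Dom_{kh}$ into an interior piece, on which $\Delta_{h\varphi(\cdot)}^k(f,\cdot)$ coincides with $\Delta_{h\varphi(\cdot)}^k(f,\cdot;\I_{A,h})$, and two small boundary pieces near $\pm 1$ which are handled with the endpoint error terms $E_k(f,\Z_{2A,t}^j)_{\wab,p}$. Fix $0<h\leq \theta t$ with $\theta=\theta(k,A)\in(0,1]$ to be chosen below, and write $\Dom_{kh}=M_h\cup B_h^+\cup B_h^-$, where
\[
M_h:=\{x\in\Dom_{kh}:[x-kh\varphi(x)/2,x+kh\varphi(x)/2]\subseteq \I_{A,h}\}
\]
and $B_h^\pm$ consist of the points of $\Dom_{kh}$ for which, respectively, $x+kh\varphi(x)/2>1-Ah^2$ or $x-kh\varphi(x)/2<-1+Ah^2$.

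For the interior piece, I would observe that, since $\a,\b\ge0$ and $x\in\Dom_{kh}$, both $0\le1-x-kh\varphi(x)/2\le1-x$ and $0\le1+x-kh\varphi(x)/2\le1+x$, giving $\wt_{kh}^{\a,\b}(x)\leq\wab(x)$. Since on $M_h$ the two differences agree, this immediately yields
\[
\norm{\wt_{kh}^{\a,\b}\,\Delta_{h\varphi}^k(f,\cdot)}{\Lp(M_h)}
\le\norm{\wab\,\Delta_{h\varphi}^k(f,\cdot;\I_{A,h})}{\Lp(\I_{A,h})}.
\]

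For $B_h^+$, the condition $x+kh\varphi(x)/2>1-Ah^2$ yields $1-x\le Ah^2+kh\varphi(x)/2$, whence $\varphi^2(x)\le 2(1-x)\le 2Ah^2+kh\varphi(x)$ and so $\varphi(x)\le C_1 h$ and $1-x\le C_2 h^2$ with constants depending only on $k$ and $A$; therefore the whole interval $[x-kh\varphi(x)/2,x+kh\varphi(x)/2]$ sits inside $[1-C_3h^2,1]$. Choosing $\theta:=\min\{1,\sqrt{2A/C_3}\}$ forces this interval to lie in $\Z_{2A,t}^2=[1-2At^2,1]$ for all $h\le\theta t$. Let $P\in\Poly_k$ be a near-best approximation to $f$ in $\Lpab(\Z_{2A,t}^2)$; using that $\Delta^k_{h\varphi(x)}$ annihilates $\Poly_k$, the pointwise bound $\wt_{kh}^{\a,\b}(x)\leq \wab(u_i(x))$ with $u_i(x):=x+(i-k/2)h\varphi(x)$ (exactly as in the proof of \lem{uppernorm}), and the change of variables $x\mapsto u_i(x)$ (with $u_i'(x)\ge1/2$ on $\Dom_{kh}$, and $u_i(B_h^+)\subseteq \Z_{2A,t}^2$), I get
\[
\norm{\wt_{kh}^{\a,\b}\,\Delta_{h\varphi}^k(f,\cdot)}{\Lp(B_h^+)}
\le c\norm{\wab(f-P)}{\Lp(\Z_{2A,t}^2)}\le c\,E_k(f,\Z_{2A,t}^2)_{\wab,p},
\]
and the symmetric argument handles $B_h^-$ with $\Z_{2A,t}^1$.

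Adding the three pieces and taking $\sup_{0<h\le\theta t}$ gives \ineq{i1}. For \ineq{i2}, I would raise the same pointwise inequality to the $p$-th power, integrate in $h\in(0,\theta t]$, divide by $\theta t$, and note
\[
\bigl(\widetilde{\Omega}_\varphi^k(f,A,\theta t)_{p,\wab}\bigr)^p=\frac1{\theta t}\int_0^{\theta t}\!\!\cdots\,dh\le \frac1\theta\bigl(\widetilde{\Omega}_\varphi^k(f,A,t)_{p,\wab}\bigr)^p,
\]
the factor $\theta^{-1/p}$ being absorbed into the final constant. The case $p=\infty$ uses identical arguments with suprema. The main obstacle is Step~3: pinning down that the geometric constraint $x\in B_h^\pm$ produces a \emph{quadratic} bound $1-|x|\le Ch^2$ in $h$ with $C=C(k,A)$, which is what permits the inclusion into $\Z_{2A,t}^j$ and hence the replacement of $f$ by $f-P$; modulo that calculation the rest is a routine imitation of the proof of \lem{uppernorm}.
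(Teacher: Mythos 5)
Your proposal is correct and follows essentially the same route as the paper: split $\Dom_{kh}$ into an interior part (where $\wt_{kh}^{\a,\b}\le\wab$ and the restricted and unrestricted differences coincide) and two boundary parts shown, via the quadratic bound $1-|x|\le Ch^2$, to generate intervals inside $\Z_{2A,t}^{j}$, which are then handled by subtracting a near-best polynomial from $\Poly_k$ exactly as in Remark~\ref{remlocal}. The only cosmetic difference is that the paper takes the interior set to be $\I_{B,h}$ with $B=\max\{A^2,4k^2\}$ and uses the two-sided equivalence of Proposition~\ref{propxu}, whereas you use the maximal interior set and the one-sided inequality; both work.
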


\begin{proof}
Let $B := \max\{A^2, 4k^2\}$,  $\theta := \min\left\{1, \sqrt{A/(kB)}\right\}$, $0<t\leq \sqrt{1/A}$ and $0<h\leq \theta t$. Note that  $h\le \sqrt{1/B}$ and, if $x\in \I_{B,h}$, then $x\pm kh\varphi(x)/2 \in \I_{A,h}$.
Also, $\I_{B,h} \subset \Dom_{2kh}$, and so
Proposition~\ref{propxu} implies that
$\wab(x) \sim\wt_{kh}^{\a,\b}(x)$, for all  $x\in\I_{B,h}$.
Hence,
\be \label{au1}
\norm{\wt_{kh}^{\a,\b}(\cdot)  \Delta_{h\varphi(\cdot)}^k(f, \cdot)}{\Lp(\I_{B,h})}  \leq
c \norm{\wab(\cdot)  \Delta_{h\varphi(\cdot)}^k(f, \cdot; \I_{A,h})}{\Lp( \I_{A,h})}  .
\ee

Now, let   $S_1 :=  [0,1] \cap \left(\Dom_{kh}\setminus \I_{B,h}\right)$. Then, denoting $x_0 = 1-Bh^2$, we have
\[
\widetilde S_1 :=\bigcup_{x\in S_1} \left[x-kh\varphi(x)/2, x+kh\varphi(x)/2 \right]
=\left[ x_0-kh\varphi(x_0)/2, 1\right] \subset [1-2At^2,1] .
\]
It now follows by Remark~\ref{remlocal} that, for a polynomial of best weighted approximation $p_k\in\Poly_k$ to $f$ on $[1-2At^2,1]$,
\begin{align} \label{s1}
\norm{ \wt_{kh}^{\a,\b}(\cdot) \Delta_{h\varphi(\cdot)}^k (f ,\cdot)}{\Lp(S_1)}  &\le c \norm{ \wab (f-p_k)}{\Lp(\widetilde S_1)}\\ \nonumber
&\le c E_k(f, [1-2At^2,1])_{\wab,p},
\end{align}
where we used the fact that any $k$th difference of $p_k$ is identically zero.

Similarly, for $S_2 :=  [-1,0] \cap \left(\Dom_{kh}\setminus \I_{B,h}\right)$ and
\[
\widetilde S_2 :=\bigcup_{x\in S_2} \left[x-kh\varphi(x)/2, x+kh\varphi(x)/2 \right] \subset [-1, -1+2At^2] ,
\]
we have
\be \label{s2}
\norm{ \wt_{kh}^{\a,\b}(\cdot) \Delta_{h\varphi(\cdot)}^k (f ,\cdot)}{\Lp(S_2)}
\le c E_k(f, [-1,-1+2At^2])_{\wab,p}.
\ee
Therefore,  noting that $\Dom_{kh} = \I_{B,h} \cup S_1 \cup S_2$ and combining \ineq{au1} through \ineq{s2}, we have, for all $0<h \le \theta t$,
\begin{align*}
\norm{\wt_{kh}^{\a,\b}(\cdot)  \Delta_{h\varphi(\cdot)}^k(f, \cdot)}{\Lp( \Dom_{kh})}
 &\le
c \norm{\wab(\cdot)  \Delta_{h\varphi(\cdot)}^k(f, \cdot; \I_{A,h})}{\Lp( \I_{A,h})} \\
& \quad
 + c \sum_{j=1}^2  E_k(f, \Z_{2A,t}^j)_{w,p}.
\end{align*}
 Estimates \ineq{i1} and \ineq{i2} now follow, respectively, by taking supremum and by integrating with respect to $h$ over $(0, \theta t]$, and using the fact that $\theta \le 1$.
\end{proof}

Using Lemmas~\ref{lemma43} and \ref{ab} we immediately get \thm{pless1} as a corollary of the
 following result that follows from \cite{k-singular}*{Corollary 11.2}.

\begin{theorem} \label{thm46}
Let  $k\in\N$, $0<p<1$, $A>0$, $\a\geq 0$, $\b\geq 0$ and $f\in\Lpab$. Then there exist $N\in\N$ depending on $k$, $p$, $\a$ and $\b$, and $\vartheta>0$ depending on $k$, $p$, $A$, $\a$ and $\b$, such that, for any $\vartheta_1\in (0,\vartheta]$, $n\geq N$, $\vartheta_1/n\leq t \leq \vartheta/n$, we have
\[
R^\varphi_{k,0}(f,n^{-k})_{\a,\b,p} \sim \widetilde{\w}_\varphi^k(f, A, t)_{p,\wab} \sim {\w}_\varphi^k(f, A, t)_{p,\wab}.
\]
\end{theorem}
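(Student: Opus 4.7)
The plan is to reduce \thm{thm46} to the general equivalence theorem \cite{k-singular}*{Corollary 11.2} for doubling/$A^*$ weights in the quasi-Banach case $0<p<1$. First I would verify that the setting of that corollary applies to the Jacobi weights $\wab$ with $\a,\b\ge 0$. Specifically, the authors have already noted (citing \cite{k-infty}*{Remark 3.3} and \cite{k-singular}*{Example 2.7}) that such Jacobi weights belong to the classes $\W(\Z)$ and $\W^*(\Z)$, so the hypotheses of Corollary 11.2 are satisfied with $w=\wab$. I would check that the realization functional $R^\varphi_{k,0}(f,n^{-k})_{\a,\b,p}$ and the moduli $\widetilde{\w}_\varphi^k(f,A,t)_{p,\wab}$, ${\w}_\varphi^k(f,A,t)_{p,\wab}$ appearing in our definitions coincide with the ones used in \cite{k-singular}, so the citation is a literal application.

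Next I would extract the conclusion: by Corollary 11.2 there exist $N=N(k,p,\a,\b)\in\N$ and $\vartheta_0=\vartheta_0(k,p,A,\a,\b)>0$ such that, for all $n\ge N$ and all $t\in[\vartheta_0/(2n),\vartheta_0/n]$ (say), one has
\[
R^\varphi_{k,0}(f,n^{-k})_{\a,\b,p} \sim \widetilde{\w}_\varphi^k(f,A,t)_{p,\wab} \sim {\w}_\varphi^k(f,A,t)_{p,\wab},
\]
with constants independent of $f$, $n$ and $t$. To obtain the stated form with an arbitrary $\vartheta_1\in(0,\vartheta]$ and the range $\vartheta_1/n\le t\le \vartheta/n$, I would then use the monotonicity of the suprema/integrals defining $\w_\varphi^k$ and $\widetilde{\w}_\varphi^k$ in $t$, together with the fact that scaling $t$ by a bounded factor changes the moduli only by a constant depending on that factor (this is a straightforward consequence of the doubling property for $\wab$ and the definitions of $\w_\varphi^k$, $\widetilde{\w}_\varphi^k$).

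The only nontrivial point is ensuring the equivalence is uniform in $\vartheta_1\in(0,\vartheta]$ with constants independent of $\vartheta_1$, which is a standard consequence of $t$-monotonicity of the moduli on one side, and of the fact that $R^\varphi_{k,0}(f,n^{-k})_{\a,\b,p}$ depends only on $n$ on the other; thus once we fix any $\vartheta_1\le\vartheta$, for $\vartheta_1/n\le t\le\vartheta/n$ the three quantities are pinched between their values at $t=\vartheta_1/n$ and $t=\vartheta/n$, both of which are equivalent to $R^\varphi_{k,0}(f,n^{-k})_{\a,\b,p}$ by Corollary 11.2.

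I expect the main (and essentially only) obstacle to be checking that the notation in \cite{k-singular} matches ours — in particular, that the parameters $A$ and $\vartheta$ in the doubling-weight framework align with the parameters here, and that the classes $\W(\Z),\W^*(\Z)$ cover $\wab$ for all $\a,\b\ge 0$ (not just strictly positive). Once this bookkeeping is done, the theorem follows by direct citation with no further computation.
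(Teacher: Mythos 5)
Your proposal matches the paper's treatment: the paper gives no independent proof of this theorem and simply states that it follows from \cite{k-singular}*{Corollary 11.2}, after having already recorded (via \cite{k-infty}*{Remark 3.3} and \cite{k-singular}*{Example 2.7}) that the Jacobi weights $\wab$ with $\a,\b\ge 0$ lie in the required weight classes, and after restating the moduli $\w_\varphi^k$, $\widetilde{\w}_\varphi^k$ in a form adapted from that reference precisely so the citation is literal. Your additional remarks on the monotonicity of $\w_\varphi^k$ in $t$ and the comparability of the averaged modulus under bounded rescaling of $t$ are correct and only make explicit the bookkeeping the paper leaves implicit.
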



\sect{Weighted DT moduli \& alternative proof of the lower estimate via $K$-functionals}

In this section, we provide an alternative proof, in the case $1\le p \le \infty$,  of the lower estimate of the moduli $\wkr(f^{(r)},t)_{\a,\b,p}$ and $\wkrav(f^{(r)},t)_{\a,\b,p}$  by appropriate $K$-functionals, using certain weighted DT moduli.

We   denote the $k$th forward and the $k$th backward differences by
$\overrightarrow{\Delta}_h^k(f,x) := \Delta_h^k(f,x+kh/2)$ and
$\overleftarrow{\Delta}_h^k(f,x):=\Delta_h^k(f,x-kh/2)$, respectively.

Adapting the weighted DT moduli which were defined in \cite{dt}*{p. 218 and (8.2.10)} for a weight $w$ on $D:=[-1,1]$, we set for $f\in L_{w,p}$,
\begin{align} \label{dtmod}
\omega_{\varphi}^k(f,t)_{w,p}&:= \sup_{0<h\leq t}
\norm{w(\cdot) \Delta_{h\varphi}^k (f, \cdot)}{\Lp[-1+t^*,1-t^*]}\\ \nonumber
& \quad +\sup_{0<h\leq t^*}\norm{w(\cdot) \overrightarrow\Delta_h^k(f, \cdot)}{\Lp[-1,-1+12t^*]} \\ \nonumber
& \quad +
\sup_{0<h\leq t^*}\norm{w(\cdot)\overleftarrow\Delta_h^k (f,\cdot)}{\Lp[1-12t^*,1]},
\end{align}
where  $t^*:=2k^2t^2$.
The first term on the right in the above equation is called the main-part modulus and denoted by $\Omega_{\varphi}^k(f,t)_{w,p}$. Obviously, we have
$\Omega_{\varphi}^k(f,t)_{w,p}\le\omega_{\varphi}^k(f,t)_{w,p}$.

Next, the weighted $K$-functional was defined in \cite{dt}*{p. 55 (6.1.1)} as
\[
K_{k,\varphi}(f,t^k)_{w,p}:=\inf_{g\in\B^k_p(w)}    \{\|w(f-g)\|_{p}+t^k\|w\varphi^k g^{(k)}\|_{p}\},
\]
and we note that
\[
K_{k,\varphi}(f,t^k)_{\wab,p} = K^\varphi_{k,0}(f,t^k)_{\a,\b,p}   .
\]

It was shown in \cite{dt}*{Theorem 6.1.1} that, given an appropriate weight $w$ (all  Jacobi weights   with {\em nonnegative} exponents  are included), the weighted $K$-functional is equivalent to the weighted DT modulus of $f$. Namely, by \cite{dt}*{Theorem 6.1.1}, for $1\le p\le\infty$,
\[
M^{-1}\omega^k_\varphi(f,t)_{w,p}\le K_{k,\varphi}(f,t^k)_{w,p}\le M\omega^k_\varphi(f,t)_{w,p}\,,\quad 0<t\leq t_0,
\]
where $t_0$ is some sufficiently small constant.
Hence, in particular, if $\a,\b\ge0$, then
\be\label{ko}
 \omega^k_\varphi(f,t)_{w_{\a,\b},p}\sim K_{k,\varphi}(f,t^k)_{w_{\a,\b},p} ,\quad 0<t\leq t_0.
\ee

Note that, if $\alpha <0$ or $\beta<0$, then there are functions $f$ in $\Lpab$ for which $\omega_\varphi^k(f,\delta)_{\wab,p} = \infty$. Indeed, the following example was given in \cite{k-kmon} (see also \cite{dt}*{Remark 6.1.2 on p. 56}) and, in fact, it was the starting point for our counterexample in \thm{smallab}.
Suppose that $1\le p<\infty$ and that $\delta >0$ is fixed. If $f(x) :=  (x+1-\e)^{-\b-1/p} \chi_{[-1+\e, -1+2\e]}(x)$ with  $\b<0$ and $0<\e<t^*$, then
$\norm{\wab f}{p} \leq c(\a,\b,p) $ (and so $f\in\Lpab$),  $\norm{\wab(\cdot) f(\cdot +\e)}{\Lp[-1,-1+12t^*]} = \infty$, and $\norm{\wab(\cdot) f(\cdot +i\e)}{p} = 0$, $2\leq i\leq k$, and therefore
\begin{align*}
   \sup_{0<h\leq t^*}\norm{\wab(\cdot) \overrightarrow\Delta_h^k(f, \cdot)}{\Lp[-1,-1+12t^*]}
  &\geq
\norm{\wab(\cdot) \overrightarrow\Delta_\e^k(f, \cdot)}{\Lp[-1,-1+12t^*]}  \\
&  =   \norm{\wab(\cdot) \left[ f(\cdot) - k f(\cdot+\e)\right]}{\Lp[-1,-1+12t^*]} \\
& = \infty.
\end{align*}
Also, if $f\in\Lpab$ then  (choosing $g\equiv 0$) we have
$K_{k,\varphi}(f,t^k)_{w_{\a,\b},p} \leq \norm{\wab f}{p} <\infty$.
Hence, \ineq{ko} is not  valid if $\a<0$ or $\b<0$ (see also \thm{smallab} with $r=0$).

An equivalent averaged weighted DT modulus
\begin{align} \label{dtaveraged}
\omega_{\varphi}^{*k}(f,t)_{w,p} & :=  \left(
\frac1t \int_0^t \int _{-1+t^*}^{1-t^*} |w(x) \Delta^k_{\tau
\varphi(x)}(f,x)|^p \, dx\, d\tau \right)^{1/p}\\ \nonumber
&\quad  +
\left( \frac1{t^*} \int_0^{t^*} \int_{-1}^{-1+At^*}  |w(x)
\overrightarrow{\Delta}^k_{u}(f,x)|^p \, dx\, du \right)^{1/p}\\
\nonumber
& \quad  + \left( \frac1{t^*} \int_0^{t^*} \int_{1-At^*}^1 |w(x)
\overleftarrow{\Delta}^k_{u}(f,x)|^p \, dx\, du \right)^{1/p},
\end{align}
where $1\leq p <\infty$,  $t^*:=2k^2t^2$, and $A$ is some sufficiently large absolute constant, was defined in \cite{dt}*{(6.1.9)}.
For $p=\infty$,
set $\omega_{\varphi}^{*k}(f,t)_{w,\infty}:=\omega_{\varphi}^k(f,t)_{w,\infty}$.
It was shown in \cite{dt}*{p. 57} that, for an  appropriate weight $w$ (again, all  Jacobi weights  with  nonnegative exponents  are included),  $1\le p\le\infty$ and sufficiently small $t_0>0$,
\be\label{aver}
K_{k,\varphi}(f,t^k)_{w,p}\le c \omega_{\varphi}^{*k}(f,t)_{w,p},\quad 0<t\leq t_0.
\ee

We now provide an alternative proof of
  the inverse estimate to \ineq{up} independent of the results in Section~\ref{seclower}. First, we need the following lemma.

\begin{lemma} \label{low} Let $k\in\N$, $r\in\N_0$, $r/2+\a\geq 0$, $r/2+\b \geq 0$,  $1\le p<\infty$ and $f\in\B_p^r(\wab)$. Then
\[
\omega_{\varphi}^{*k}(f^{(r)},t)_{w_{\a,\b}\varphi^r,p}\leq
c(k,r,\a,\b)\wkrav(f^{(r)},c(k)t)_{\a,\b,\,p}\,,\quad 0<t\leq c(k).
\]
\end{lemma}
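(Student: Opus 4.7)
The plan is to split the averaged DT modulus $\omega_{\varphi}^{*k}(f^{(r)},t)_{w_{\a,\b}\varphi^r,p}$ into the three pieces appearing in \eqref{dtaveraged}: the main part $M$ (integration over $[-1+t^*,1-t^*]$ with symmetric differences) and the two endpoint contributions $E_-$ and $E_+$ (integration over $[-1,-1+At^*]$ and $[1-At^*,1]$, with forward and backward differences, respectively). I will estimate each of these pieces separately by $c\,\wkrav(f^{(r)},c(k)t)_{\a,\b,p}$.

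For the main part $M$, notice that for every $0<\tau\le t$ we have $\mu(2k\tau)\le 2k^2\tau^2\le t^*$, so $[-1+t^*,1-t^*]\subseteq\Dom_{2k\tau}\subset\Dom_{k\tau}$. By \prop{propxu}{} (applied with $\delta=k\tau$ and the exponents $r/2+\a,r/2+\b\ge 0$),
\[
w_{\a,\b}(x)\varphi^r(x)=w_{r/2+\a,r/2+\b}(x)\le c\,\wt^{r/2+\a,r/2+\b}_{k\tau}(x) \qquad\text{for } x\in\Dom_{2k\tau}.
\]
Since $[-1+t^*,1-t^*]\subseteq\Dom_{k\tau}$, the truncated difference $\Delta^k_{\tau\varphi(x)}(f^{(r)},x;[-1+t^*,1-t^*])$ coincides with $\Delta^k_{\tau\varphi(x)}(f^{(r)},x)$, and a direct comparison gives $M\le c\,\wkrav(f^{(r)},t)_{\a,\b,p}$.

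For the endpoint term $E_+$ (the other endpoint is handled symmetrically), rewrite the backward difference as $\overleftarrow{\Delta}_u^k(f^{(r)},x)=\Delta_u^k(f^{(r)},x-ku/2)$ and change variables $z:=x-ku/2$. Then, for each fixed $z$, further substitute $u=\tau\varphi(z)$ so that $du=\varphi(z)\,d\tau$. After exchanging the order of integration, two things need to be checked. First, that the transformed parameter $\tau$ is bounded by $C_1 t$ with $C_1=C_1(k,A)$ on the relevant region; splitting into the cases $1-z\ge k^3t^2$ (where the constraint $u\le t^*$ binds and $\varphi(z)\ge k^{3/2}t$, so $\tau\le t^*/\varphi(z)\le 2\sqrt{k}\,t$) and $1-z<k^3t^2$ (where $u\le 2(1-z)/k$ binds and $\tau\le (2/k)\sqrt{1-z}\le 2\sqrt{k}\,t$) gives $\tau\le 2\sqrt{k}\,t$ in both cases. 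Second, that the shifted weight satisfies
\[
\bigl|w_{\a,\b}(z+k\tau\varphi(z)/2)\varphi^r(z+k\tau\varphi(z)/2)\bigr|\le c\,\wt^{r/2+\a,r/2+\b}_{k\tau}(z);
\]
the crucial factor $(1-z-k\tau\varphi(z)/2)^{r/2+\a}$ appears identically on both sides, and the ratio of the remaining factors $\bigl((1+z+k\tau\varphi(z)/2)/(1+z-k\tau\varphi(z)/2)\bigr)^{r/2+\b}$ is bounded on $\{z\ge 0,\ k\tau\le c\}$ by elementary estimates.

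Combining these steps with the observation that $\varphi(z)\le c(k,A)t$ on the relevant $z$-range (since $1-z\le (A+k/2)t^*$ implies $\varphi(z)\le\sqrt{2(1-z)}\le c\,t$), we have $\varphi(z)/t^*\le c/t$, and therefore
\[
E_+^p\le\frac{c}{t}\int_0^{C_1 t}\int_{\Dom_{k\tau}}\bigl|\wt^{r/2+\a,r/2+\b}_{k\tau}(z)\Delta^k_{\tau\varphi(z)}(f^{(r)},z)\bigr|^p\,dz\,d\tau
=c\,\wkrav(f^{(r)},C_1 t)_{\a,\b,p}^p.
\]
Summing the contributions $M$, $E_+$, $E_-$ yields the assertion with $c(k):=C_1$. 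The main technical obstacle is the change of variables at the endpoints, and specifically verifying the boundedness $\tau\le c(k)t$; once this is in place, the weight comparison is essentially automatic because the endpoint-sensitive factor is already common to both weights.
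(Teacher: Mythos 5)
Your proposal is correct and follows essentially the same route as the paper's proof: the main part is handled by the weight comparison $w_{r/2+\a,r/2+\b}(x)\sim\wt^{r/2+\a,r/2+\b}_{k\tau}(x)$ on $\Dom_{2k\tau}\supseteq[-1+t^*,1-t^*]$, and each endpoint term by the change of variables $z=x\mp ku/2$, $u=\tau\varphi(z)$, the bound $\tau\le c(k)t$, the one-sided weight comparison in which the endpoint-critical factor cancels, and the observation $\varphi(z)\le c\,t$ to convert $1/t^*$ into $1/t$. The only cosmetic differences are that the paper first reduces to $r=0$ and writes out the left endpoint (forward differences) rather than the right one.
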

\begin{proof} The proof of this lemma is very similar to that of Lemma 6.1 in \cite{kls}, but we still provide all details here for completeness.
The three terms in the definition \ineq{dtaveraged} are to be estimated separately, but the second and third are similar, so we will estimate the first two.
Since
$\omega_{\varphi}^{*k}(f^{(r)},t)_{w_{\a,\b}\varphi^r,p} = \omega_{\varphi}^{*k}(g,t)_{w_{r/2+\a,r/2+\b},p}$
and
$ \wkrav(f^{(r)},t)_{\a,\b,p} = \w_{k,0}^{*\varphi}(g,t)_{r/2+\a, r/2+\b, p}$
with $g:= f^{(r)}$, without loss of generality, we may assume that $r=0$ throughout this proof.

Note that $t^* = 2k^2 t^2$ implies that $[-1+t^*, 1-t^*]\subset\Dom_{2kt}\subset \Dom_{2k\tau}$, $0\leq \tau\leq t$,  so that by \ineq{weightxu}
we have
\begin{align*}
\lefteqn{\frac1t\int_0^t\int_{-1+t^*}^{1-t^*}|w_{\a,\b}(x) \Delta^k_{\tau\varphi(x)}
(f ,x)|^p\,dx\,d\tau}\\
&\quad \leq
\frac{2^{(\a+\b)p}}{t}\int_0^t \int_{\Dom_{2k\tau}}| \wt^{\a,\b}_{k\tau}(x)
\Delta^k_{\tau\varphi(x)}(f,x)|^p\,dx\,d\tau\\
&\quad \leq
2^{(\a+\b)p}\w_{k,0}^{*\varphi}(f,t)_{\a,\b,\,p}^p\,.
\end{align*}

In order to estimate the second term we follow the proof of \cite{kls}*{Lemma 6.1} and assume that $t\le(2k\sqrt{A+k/2})^{-1}$. Then
\begin{align*}
&\frac1{t^*}\int_0^{t^*}\int_{-1}^{-1+At^*}|w_{\a,\b}(x) \overrightarrow{\Delta}^k_{u}(f,x)|^p\,dx\,du\\
&=\frac1{t^*}\int_0^{t^*}\int_{-1}^{-1+At^*}|w_{\a,\b}(x) \Delta^k_{u}(f,x+ku/2)|^p\,dx\,du\\
&\le\frac1{t^*}\int_0^{t^*}\int_{-1+ku/2}^{-1+(A+k/2)t^*}|w_{\a,\b}(y-ku/2) \Delta^k_{u}(f,y)|^p\,dy\,du\\
&\le\frac1{t^*}\int_{-1}^{-1+(A+k/2)t^*}\int_0^{2(y+1)/k}|w_{\a,\b}(y-ku/2) \Delta^k_{u}(f,y)|^p\,du\,dy\\
&=
\frac1{t^*}\int_{-1}^{-1+(A+k/2)t^*}\int_0^{2(y+1)/(k\varphi(y))}\varphi(y)|w_{\a,\b}(y-kh\varphi(y)/2)
  \Delta^k_{h\varphi(y)}(f,y)|^p\,dh\,dy\\
&\le
c\frac1{t^*}\int_{-1}^{-1+(A+k/2)t^*}
\int_0^{2(y+1)/(k\varphi(y))}\varphi(y)| \wt_{kh}^{\a,\b}(y) \Delta^k_{h\varphi(y)}(f,y)|^p\,dh\,dy\\
&\le
c\frac1{\sqrt{t^*}}\int_{-1}^{-1+(A+k/2)t^*}\int_0^{2(y+1)/(k\varphi(y))}|\wt_{kh}^{\a,\b}(y)
\Delta^k_{h\varphi(y)}(f,y)|^p\,dh\,dy\\
&\le c
\frac1{\sqrt{t^*}}\int_0^{c\sqrt{t^*}}\int_{\Dom_{kh}\cap[-1,-1+(A+k/2)t^*]}
|\wt_{kh}^{\a,\b}(y) \Delta^k_{h\varphi(y)}(f,y)|^p \,dy\,dh\\
&\le c \w_{k,0}^{*\varphi} (f,c(k)t)_{\a,\b,p}^p\,,
\end{align*}
where for the third inequality we used the fact that, for  $y\le-1/2$ and $0\le h\le2(y+1)/(k\varphi(y))$,
\[
1-y+kh\varphi(y)/2 \leq 2 \left( 1-y-kh\varphi(y)/2 \right) ,
\]
and so
\[
w_{\a,\b}(y-kh\varphi(y)/2)  \leq 2^{\a} \wt_{kh}^{\a,\b}(y).
\]
This completes the proof.
\end{proof}

A similar proof yields (see \cite{kls}*{Lemma 6.2}) an analogous result in the case $p=\infty$.

\begin{lemma} \label{lowinf}
Let $k\in\N$, $r\in\N_0$, $r/2+\a\geq 0$, $r/2+\b\geq 0$  and $f\in\B_\infty^r(w_{\a,\b})$. Then
\[\omega_{\varphi}^{k}(f^{(r)},t)_{w_{\a,\b}\varphi^r,\infty} \le c(k,r,\a,\b)\wkr(f^{(r)},
c(k)t)_{\a,\b,\,\infty}\,,\quad0<t\le c(k).
\]
\end{lemma}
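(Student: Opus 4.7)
My plan is to mirror the proof of Lemma~\ref{low} but exploit the fact that, for $p=\infty$, suprema replace the averaged integrals used there, which sidesteps most of the change-of-variable gymnastics. As in Lemma~\ref{low}, I would first reduce to $r=0$: since $\omega_\varphi^k(f^{(r)},t)_{w_{\a,\b}\varphi^r,\infty}=\omega_\varphi^k(g,t)_{w_{r/2+\a,r/2+\b},\infty}$ and $\wkr(f^{(r)},t)_{\a,\b,\infty}=\w_{k,0}^\varphi(g,t)_{r/2+\a,r/2+\b,\infty}$ with $g:=f^{(r)}$, it suffices to establish
\[
\omega_\varphi^k(g,t)_{w_{\a,\b},\infty}\le c\,\w_{k,0}^\varphi(g,c(k)t)_{\a,\b,\infty},\qquad 0<t\le c(k).
\]
Then I would handle each of the three terms in \ineq{dtmod} separately.

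For the main-part piece on $[-1+t^*,1-t^*]$ with $t^*=2k^2t^2$, I observe that $\mu(2kh)\le2(kh)^2\le t^*$ for $0<h\le t$, so $[-1+t^*,1-t^*]\subseteq\Dom_{2kh}$. Proposition~\ref{propxu} then gives $\wab(x)\sim\wt_{kh}^{\a,\b}(x)$ on this interval, so
\[
\sup_{0<h\le t}\norm{\wab(\cdot)\Delta_{h\varphi}^k(g,\cdot)}{L_\infty[-1+t^*,1-t^*]}\le c\,\w_{k,0}^\varphi(g,t)_{\a,\b,\infty}.
\]
For the right-boundary piece on $[-1,-1+12t^*]$, I fix $u\in(0,t^*]$ and $x$ in that interval, then write $\overrightarrow\Delta_u^k(g,x)=\Delta_u^k(g,y)=\Delta_{h\varphi(y)}^k(g,y)$ with $y:=x+ku/2$ and $h:=u/\varphi(y)$. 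Three verifications are required: (i) $y\in\Dom_{kh}$, which follows from $[x,x+ku]\subseteq[-1,1]$ when $t$ is small; (ii) $h\le c(k)t$, which follows from $1+y\ge ku/2$ and $\varphi(y)\sim\sqrt{1+y}$ (valid for $y\le-1/2$), giving $h\le u/(c\sqrt{ku/2})\le c(k)\sqrt{t^*}=c(k)t$; (iii) the weight comparison $w_{\a,\b}(y-ku/2)\le c\,\wt_{kh}^{\a,\b}(y)$, which reduces to showing that $(1-y+ku/2)/(1-y-ku/2)$ is bounded — immediate since $1-y\ge3/2$ and $ku/2\le kt^*/2$ is small. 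Combining these with $\a\ge0$ yields
\[
|\wab(x)\overrightarrow\Delta_u^k(g,x)|\le c|\wt_{kh}^{\a,\b}(y)\Delta_{h\varphi(y)}^k(g,y)|,
\]
and taking the supremum over admissible $x,u$ produces the bound by $c\,\w_{k,0}^\varphi(g,c(k)t)_{\a,\b,\infty}$. The left-boundary piece is symmetric, using $y:=x-ku/2$.

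The main obstacle, modest as it is, lies in step (iii) for the boundary terms: confirming that the endpoint weight $w_{\a,\b}(y-ku/2)$ can be replaced by $\wt_{kh}^{\a,\b}(y)$ uniformly. Unlike the $L_p$ case of Lemma~\ref{low}, where the shift is absorbed into an integration, here the estimate must be pointwise — but because $y$ lies near $-1$, $1-y$ stays in a fixed compact sub-interval of $(0,\infty)$ bounded away from $0$, and $ku/2\le kt^*/2$ is small, so the factor $((1-y+ku/2)/(1-y-ku/2))^\a$ is bounded by an explicit constant depending only on $k$, $\a$, $\b$. Once this is in hand, everything else is routine bookkeeping, and the constant $c(k)$ in the step $h\le c(k)t$ is absorbed into the statement.
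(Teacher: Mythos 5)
Your proof is correct and follows exactly the route the paper intends: the paper gives no separate argument for Lemma~\ref{lowinf}, merely noting that the proof of Lemma~\ref{low} adapts to $p=\infty$, and your reduction to $r=0$, the treatment of the main part via $[-1+t^*,1-t^*]\subseteq\Dom_{2kh}$ and Proposition~\ref{propxu}, and the rewriting of the boundary differences as $\Delta^k_{h\varphi(y)}(g,y)$ with $y=x\pm ku/2$, $h=u/\varphi(y)$, together with the weight comparison $w_{\a,\b}(y-ku/2)\le c\,\wt_{kh}^{\a,\b}(y)$, are precisely the steps used there.
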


We are now ready to prove the inverse of the estimate \ineq{up}.

\begin{lemma}\label{lower} Let $k\in\N$, $r\in\N_0$, $r/2+\a\geq 0$, $r/2+\b\geq 0$ and $1\le p\le\infty$. If $f\in\B^r_p(w_{\a,\b})$, then
\be\label{lowineq}
K^\varphi_{k,r}(f^{(r)},t^k)_{\a,\b,\,p} \leq c  \wkrav(f^{(r)},t)_{\a,\b,\,p} \leq c \wkr(f^{(r)},t)_{\a,\b,\,p}\, ,\quad 0<t\leq 2/k.
\ee
\end{lemma}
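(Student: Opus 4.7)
The second estimate $\wkrav(f^{(r)},t)_{\a,\b,p} \leq \wkr(f^{(r)},t)_{\a,\b,p}$ is just \ineq{ineq}, so the work reduces to establishing the first inequality. The strategy is to route everything through the classical (unshifted) Ditzian--Totik weighted $K$-functional associated with the \emph{Jacobi} weight $w := w_{\a,\b}\varphi^r$; its exponents at $\pm 1$ are $\a + r/2 \geq 0$ and $\b + r/2 \geq 0$, so $w$ belongs to the class for which the weighted DT theory from \cite{dt} and the estimate \ineq{aver} apply.

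The first step is the identification
\[
K^\varphi_{k,r}(f^{(r)},t^k)_{\a,\b,p} = K_{k,\varphi}(f^{(r)},t^k)_{w_{\a,\b}\varphi^r,p}.
\]
The correspondence $g \mapsto G := g^{(r)}$ is a surjection of $\B^{k+r}_p(\wab)$ onto $\B^{k}_p(w_{\a,\b}\varphi^r)$ (integrate $G$ an $r$-fold to recover $g$, with $g^{(k+r-1)} = G^{(k-1)} \in \AC_\loc$ and $\wab\varphi^{k+r} g^{(k+r)} = \wab\varphi^r \cdot \varphi^k G^{(k)} \in L_p$), and the two norm expressions coincide under this correspondence. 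Hence the two infima are equal.

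Next I chain \ineq{aver} (applied with $w = w_{\a,\b}\varphi^r$) with \lem{low} (or \lem{lowinf} when $p=\infty$, for which $\omega^{*k}_\varphi = \omega^k_\varphi$ and $\wkrav = \wkr$ by convention) to obtain, for $0 < t \leq t_1$ with some $t_1 = t_1(k,r,\a,\b) > 0$,
\[
K^\varphi_{k,r}(f^{(r)},t^k)_{\a,\b,p} \leq c\, \omega^{*k}_\varphi(f^{(r)},t)_{w_{\a,\b}\varphi^r,p} \leq c\, \wkrav(f^{(r)}, c(k)\,t)_{\a,\b,p}.
\]
To remove the factor $c(k)$ in the argument of $\wkrav$ I use the scaling identity
\[
K^\varphi_{k,r}(f^{(r)}, (\lambda s)^k)_{\a,\b,p} \leq \lambda^k K^\varphi_{k,r}(f^{(r)}, s^k)_{\a,\b,p}, \quad \lambda \geq 1,
\]
which is immediate from the definition. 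Substituting $s = t/c(k)$ (assuming $c(k) \geq 1$; otherwise the monotonicity of $K$ in $t$ is even easier) yields the desired bound on a slightly smaller interval $(0, t_2]$.

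Finally, to extend from $(0,t_2]$ to the whole range $(0, 2/k]$, I use that $K^\varphi_{k,r}(f^{(r)},t^k)_{\a,\b,p}$ is non-decreasing in $t$ and bounded above by $\|\wab\varphi^r f^{(r)}\|_p$, while $t \mapsto t\, \wkrav(f^{(r)},t)^p_{\a,\b,p}$ is non-decreasing by its integral representation, so that $\wkrav(f^{(r)},t)_{\a,\b,p} \geq c\, \wkrav(f^{(r)},t_2)_{\a,\b,p}$ for $t_2 \leq t \leq 2/k$. Transferring the bound already proved at $t = t_2$ gives the full statement. The principal obstacle is the first step, where one must check carefully that integration really does yield an element of $\B^{k+r}_p(\wab)$ with matching $K$-functional summands; after that, the argument is an entirely routine concatenation of results quoted above.
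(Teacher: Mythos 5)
Your proposal is correct and follows essentially the same route as the paper: the identification $K^\varphi_{k,r}(f^{(r)},t^k)_{\a,\b,p}=K_{k,\varphi}(f^{(r)},t^k)_{w_{\a,\b}\varphi^r,p}$, the chain through \ineq{aver} and Lemmas~\ref{low}/\ref{lowinf}, and then the scaling $K^\varphi_{k,r}(f^{(r)},(\lambda s)^k)\le\lambda^k K^\varphi_{k,r}(f^{(r)},s^k)$ to absorb the constant $c(k)$ and reach the full range $0<t\le 2/k$. The paper merely folds your last two steps into a single choice $\mu:=\max\{1,c_1,2/(kc_2)\}$ together with the quasi-monotonicity of $\wkrav$ coming from its integral representation, which is exactly the observation you make.
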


\begin{proof}

Combining \ineq{aver} with the weight $w=\wab\varphi^r$  with Lemmas~\ref{low} and \ref{lowinf}, we obtain, for $1\leq p\leq \infty$,
 \begin{align*}
 K^\varphi_{k,r}(f^{(r)},t^k)_{\a,\b,\,p} & = K_{k,\varphi}(f^{(r)},t^k)_{\wab \varphi^r,\,p} \leq c \omega_{\varphi}^{*k}(f^{(r)},t)_{w_{\a,\b}\varphi^r,\,p} \\
 & \leq c \wkrav(f^{(r)},c(k)t)_{\a,\b,\,p}\, , \quad 0<t\leq c\,.
 \end{align*}
Hence, we have
\be \label{123}
K^\varphi_{k,r}(f^{(r)},t^k)_{\a,\b,\,p}\le c\wkrav(f^{(r)},c_1 t)_{\a,\b,\,p}\,,\quad 0<t\le c_2,
\ee
where $c_1$ and $c_2$ are some positive constants  that may depend only on $k$.

Suppose now that $0<t\leq 2/k$. Then, denoting  $\mu := \max\{1,c_1, 2/(kc_2) \}$ and   using \ineq{123} we have
 \begin{align*}
K_{k,r}^\varphi(f^{(r)},t^k)_{\a,\b,\,p} & \leq   \mu^k K_{k,r}^\varphi(f^{(r)},(t/\mu)^k)_{\a,\b,\,p} \leq c  \wkrav(f^{(r)}, c_1  t/\mu)_{\a,\b,p} \\
& \leq   c \wkrav(f^{(r)}, t)_{\a,\b,\,p}\, ,
\end{align*}
which is the first inequality in \ineq{lowineq}.
Finally, the second inequality in \ineq{lowineq} follows from \ineq{ineq}.
\end{proof}

\begin{bibsection}
\begin{biblist}

\bib{dt}{book}{
author={Ditzian, Z.},
author={Totik, V.},
title={Moduli of smoothness},
series={Springer Series in Computational Mathematics},
volume={9},
publisher={Springer-Verlag},
place={New York},
date={1987},
pages={x+227},
isbn={0-387-96536-X},
}

\bib{sh}{book}{
author={Dzyadyk, V. K.},
author={Shevchuk, I. A.},
title={Theory of Uniform Approximation of Functions by Polynomials},
publisher={Walter de Gruyter},
place={Berlin},
date={2008},
pages={xv+480},
}

\bib{glsw}{article}{
   author={Gonska, H. H.},
   author={Leviatan, D.},
   author={Shevchuk, I. A.},
   author={Wenz, H.-J.},
   title={Interpolatory pointwise estimates for polynomial approximation},
   journal={Constr. Approx.},
   volume={16},
   date={2000},
   number={4},
   pages={603--629},
}

\bib{k-kmon}{article}{
   author={Kopotun, K. A.},
   title={Weighted moduli of smoothness of $k$-monotone functions and
   applications},
   journal={J. Approx. Theory},
   volume={192},
   date={2015},
   pages={102--131},
   issn={0021-9045},
}

\bib{k-singular}{article}{
   author={Kopotun, K. A.},
   title={Polynomial approximation with doubling weights having finitely
   many zeros and singularities},
   journal={J. Approx. Theory},
   volume={198},
   date={2015},
   pages={24--62},
}

\bib{k-infty}{article}{
   author={Kopotun, K. A.},
   title={Uniform polynomial approximation with $A^*$ weights having
   finitely many zeros},
   journal={J. Math. Anal. Appl.},
   volume={435},
   date={2016},
   number={1},
   pages={677--700},
}

\bib{klps}{article}{
   author={Kopotun, K. A.},
   author={Leviatan, D.},
   author={Prymak, A.},
   author={Shevchuk, I. A.},
   title={Uniform and pointwise shape preserving approximation by algebraic
   polynomials},
   journal={Surv. Approx. Theory},
   volume={6},
   date={2011},
   pages={24--74},
}

\bib{kls-umzh}{article}{
   author={Kopotun, K. A.},
   author={Leviatan, D.},
   author={Shevchuk, I. A.},
   title={Are the degrees of the best (co)convex and unconstrained
   polynomial approximations the same? II},
   language={Russian, with Russian summary},
   journal={Ukra\"\i n. Mat. Zh.},
   volume={62},
   date={2010},
   number={3},
   pages={369--386},
   issn={1027-3190},
   translation={
      journal={Ukrainian Math. J.},
      volume={62},
      date={2010},
      number={3},
      pages={420--440},
      issn={0041-5995},
   },
}

\bib{kls1}{article}{
author={Kopotun, K. A.},
author={Leviatan, D.},
author={Shevchuk, I. A.},
title={New moduli of smoothness},
journal={Publ. l'Inst. Math. Serbian Academy of Sciences and Arts of Belgrade},
volume={96(110)},
date={2014},
 pages={169--180},
 }

\bib{kls}{article}{
author={Kopotun, K. A.},
author={Leviatan, D.},
author={Shevchuk, I. A.},
title={New moduli of smoothness: Weighted DT moduli revisited and applied},
journal={Constr. Approx. },
volume={42},
date={2015},
 pages={ 129--159},
}

\bib{stok}{article}{
author={Kopotun, K. A.},
author={Leviatan, D.},
author={Shevchuk, I. A.},
title={On weighted approximation with Jacobi weights},
status={preprint},
}

\end{biblist}
\end{bibsection}

\end{document}